\documentclass{amsart}
\usepackage{amsmath}
  \usepackage{paralist}
  \usepackage{graphics} 
  \usepackage{epsfig} 
\usepackage{graphicx}  \usepackage{epstopdf}
 \usepackage[colorlinks=true]{hyperref}


  \textheight=8.2 true in
   \textwidth=5.0 true in
    \topmargin 30pt
     \setcounter{page}{1}




\newtheorem{theorem}{Theorem}[section]
\newtheorem{corollary}{Corollary}

\newtheorem{lemma}[theorem]{Lemma}
\newtheorem{proposition}{Proposition}

\theoremstyle{definition}
\newtheorem{definition}[theorem]{Definition}
\newtheorem{remark}{Remark}

\newtheorem{mtheorem}{Theorem}
\newtheorem{mproposition}[mtheorem]{Proposition}

\def\Inte{\mathop{\mathrm{Int}}}
\def\Cl{\mathop{\mathrm{Cl}}}

\title[Existence of minimal flows on nonorientable surfaces] 
      {Existence of minimal flows on nonorientable surfaces}

\author[J.G. Esp\'in Buend\'ia,  D. Peralta-Salas and G. Soler L\'opez]{}

\subjclass{37C70; 37C10; 37E0.}
 \keywords{Flow, Surface, Minimality, Transitivity, Interval and circle exchange transformations, Suspension}


\email{josegines.espin@um.es}
\email{dperalta@icmat.es}
\email{gabriel.soler@upct.es}
%


\thanks{$^*$ Corresponding author: josegines.espin@um.es}

\begin{document}
\maketitle

\centerline{\scshape Jos\'e Gin\'es Esp\'in Buend\'ia$^*$}
\medskip
{\footnotesize
 \centerline{Departamento de Matem\'aticas}
\centerline{Universidad de Murcia (Campus de Espinardo)}
\centerline{30100 Espinardo-Murcia (Spain)}
} 

\medskip

\centerline{\scshape Daniel Peralta-Salas}
\medskip
{\footnotesize
 \centerline{ Instituto de Ciencias Matem\'aticas}
 \centerline{Consejo Superior de Investigaciones Cient\'ificas}
  \centerline{28049 Madrid (Spain)}
}

\medskip

\centerline{\scshape Gabriel Soler L\'opez}
\medskip
{\footnotesize
 \centerline{ Departamento de Matem\'atica Aplicada y Estad\'istica}
 \centerline{Universidad Polit\'ecnica de Cartagena }
   \centerline{Paseo Alfonso XIII, 52}
     \centerline{30203 Cartagena (Spain)}
}

\bigskip

 \centerline{(Communicated by the associate editor name)}

\begin{abstract} Surfaces admitting flows all whose orbits are dense are called minimal. Minimal orientable surfaces were characterized by J.C. Beni\`{e}re in 1998, leaving open the nonorientable case. This paper fills this gap providing a characterization of minimal 
nonorientable surfaces of finite genus. We also construct an example 
of a minimal nonorientable surface with infinite genus and conjecture that any nonorientable surface without combinatorial boundary is minimal.
\end{abstract}

\section{Introduction}

The problem of finding transitive flows on manifolds has a long tradition
(see for instance the bibliography in \cite{thomas}). Two works 
contributed to solve the problem for surfaces. Firstly, in $1998$, 
J. C. Beni\`{e}re proved in his Ph.D. Thesis \cite{beniere} that all noncompact orientable 
surfaces which are not embeddable in the real euclidean plane posses a
minimal flow. Independently, in $1999$, the third 
author in his Master Thesis~\cite{tesinasoler} and in~\cite{victorgabi}, with V. Jim\'{e}nez,
characterized all transitive surfaces of finite genus. However, up to our knowledge, 
the minimality of nonorientable surfaces has not been 
characterized so far. The present paper fills this gap for the case of
nonorientable surfaces of finite genus and makes progress in the study of the infinite genus ones.

\def\T{\mathbb{T}}
\def\B{\mathbb{B}}

There is nothing to say about the study of compact minimal surfaces. According to 
the Poincar\'e-Hopf Index Theorem, if a compact surface $S$ admits a smooth minimal flow,
the Euler characteristic of $S$ must be zero and either $S$ is the torus $\T_2$
or the Klein bottle $\B_2$. The latter case can be discarded because $\B_2$ does not
admit nontrivial recurrent orbits. On the other hand, $\T_2$ admits in fact 
an analytic minimal flow because so is the irrational flow.

Thus, it suffices to focus on the study of noncompact surfaces.
No subsurfaces of the sphere, the real projective plane or the Klein bottle can posses minimal flows since 
these surfaces do not admit flows with nontrivial recurrent orbits (see e.g.~\cite[Section~2.2]{rusos}). 
The nonorientable compact surface of genus $3$ (the torus with a cross-cap) and any of its subsurfaces of genus $3$ cannot 
possess minimal flows either. This result is stated in~\cite[p. 14]{rusos} without a proof; for the sake of completeness, we shall provide a proof in Appendix~\ref{ApenA}. 

In the literature, it is possible to find some isolated examples of noncompact nonorientable surfaces of finite genus with
a minimal flow. For instance, in $1978$ C. Gutierrez constructed a minimal flow on the compact nonorientable surface of genus 4 minus
two points (\cite{gutierrez3}). Similarly, it can be checked that, for any positive integer $2+n$, the surface obtained
after removing $n$ points from a compact nonorientable surface of genus $2+n$ admits a minimal flow (\cite{capitulosoler}). 
 
Instead of talking about minimal flows we could indistinguishably use the notion of minimal vector fields. Given a surface $S$ we may say that a smooth vector field $X$ on $S$ is minimal if its associated maximal flow is minimal (i.e. if any maximal integral curve associated to $X$ is dense in $S$). The aim of this work is to complete the characterization of noncompact 
surfaces of finite genus which admit a minimal smooth vector field and to give an example of 
a nonorientable surface of infinite genus with the same property. 

The following is the first main result of the paper. In its statement, we say that $X$ is \textit{area preserving} if there exists a non-degenerate and complete $2$-form $\theta$ (an area form) such that the Lie derivative of $\theta$ with respect to $X$ is equal to zero (i.e. $\mathcal{L}_X(\theta)=0$).

\begin{mtheorem}\label{T:maingenerofinito}
Let $S$ be an orientable noncompact surface of finite genus $g\geq 1$  or a nonorientable noncompact surface of finite
 genus $g\geq 4$. Then there exists a real analytic complete vector field $X$ on $S$ which is minimal and area preserving.
\end{mtheorem}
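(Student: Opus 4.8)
The strategy is to build, for each admissible genus, one ``model'' minimal flow on a compact surface with a single point removed, and then to obtain an arbitrary surface of that genus by deleting a tame totally disconnected set placed in general position with respect to the model flow. To set this up I would first invoke the classification of noncompact surfaces (Ker\'ekj\'art\'o--Richards): a noncompact surface $S$ of finite genus $g$ in the prescribed orientability class is homeomorphic to $\Sigma_g\setminus E$ in the orientable case and to $N_g\setminus E$ in the nonorientable case, where $\Sigma_g$, resp.\ $N_g$, is the closed orientable, resp.\ nonorientable, surface of genus $g$ and $E$ is a nonempty closed totally disconnected subset, which may moreover be placed inside a single coordinate disc and whose homeomorphism type is the only relevant invariant (any two embeddings of a compact totally disconnected set in a surface are ambient homeomorphic). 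Since minimality, completeness and the area-preserving property are diffeomorphism invariants and $\Sigma_g\setminus E$ carries an essentially unique real-analytic structure, it suffices to construct a real-analytic complete area-preserving minimal vector field on $\Sigma_g\setminus E$ (resp.\ $N_g\setminus E$) for a convenient analytic structure.

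For the model flow I would take the straight-line flow of a flat structure with a \emph{single} conical singularity $q$: in the orientable case $g\ge 2$, the flow of a holomorphic $1$-form in the minimal stratum $\mathcal H(2g-2)$ chosen with minimal vertical foliation (such forms exist); for $g=1$, a linear flow on $\T_2$, with no singularity at all; in the nonorientable case $g\ge 4$, the flow of a holomorphic quadratic differential on $N_g$ with a single zero — equivalently, the suspension of a minimal interval or circle exchange transformation \emph{with flips}, the flips being exactly what render the suspension nonorientable — with the combinatorics arranged so that the suspension is $N_g$ with one singular point. This vector field is real-analytic off $q$, preserves the flat area form, and its restriction to $M_g:=\Sigma_g\setminus\{q\}$ (resp.\ $N_g\setminus\{q\}$) is minimal: minimality of the exchange transformation makes every regular orbit dense, and the separatrices of $q$ are dense too. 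Moreover every regular orbit of this restricted flow is recurrent in both time directions: a regular trajectory of a flat straight-line flow cannot limit onto a cone point, so its $\omega$- and $\alpha$-limit sets are nonempty, closed and invariant, hence equal to $M_g$; in particular both halves of every regular orbit are dense.

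Next I would use the following stability principle: if $\phi$ is a flow on a surface $M$ whose regular orbits are dense and two-sided recurrent, and $F\subseteq M$ is closed, totally disconnected, meets each orbit in at most one point, and misses every separatrix, then $\phi|_{M\setminus F}$ is again minimal — each of its orbits is an unchanged dense orbit or a dense half-orbit. Given $E$, I would realize it inside $\Sigma_g$ (resp.\ $N_g$) as $\{q\}\cup E'$ with $E'$ closed in $M_g$ and accumulating onto $q$: single out one point of $E$ to play the role of $q$ and spread the successive clopen shells of a clopen neighbourhood basis of that point into a shrinking sequence of annuli around $q$, so that $\{q\}\cup E'\cong E$. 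Then I would choose $E'$ so that $F:=\{q\}\cup E'$ satisfies the hypotheses of the principle: the return maps of the model flow to a small transversal, together with all their iterates, form a countable union of graphs and hence a meager subset of the transversal square, and the finitely many separatrices of $q$ form a meager set, so a Mycielski-type independence argument places $E'$ inside a Cantor set meeting each orbit at most once and missing all separatrices. This yields a minimal, real-analytic, area-preserving vector field on $(\Sigma_g\text{ or }N_g)\setminus F$, a surface homeomorphic to $S$ because $F\cong E$. Finally, since separatrices and the trajectories through points near $F$ otherwise reach $F$ in finite time, I would replace the vector field $X$ by $fX$ where $f>0$ is real-analytic on $S$ and decays at least linearly in the flat distance to $F$; such an $f$ exists on the open analytic surface $S$, it forces every trajectory to need infinite time to approach $F$, it leaves the (dense) orbits unchanged, and $fX$ preserves the rescaled, still nondegenerate and complete, area form $\theta/f$. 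In the orientable case the bare existence (though not the analyticity or the area-preserving feature) also follows from \cite{beniere}.

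The main obstacle is the construction of the nonorientable model flow. Interval exchange transformations with flips are generically non-minimal — they typically carry periodic orbits and are not uniquely ergodic — so one cannot merely ``take a generic one''; minimal examples must be produced explicitly, for instance by a renormalization (Rauzy-type) argument adapted to flips, or via an equivariant quadratic differential on the orientation double cover, while simultaneously controlling the combinatorics so that the suspension is nonorientable of the prescribed genus $g$ with a single conical point. This is precisely where the bound $g\ge 4$ enters: the requisite combinatorial data does not exist for $N_3$, in agreement with the absence of nontrivial recurrence on that surface and its subsurfaces, whereas it does exist for every $g\ge 4$.
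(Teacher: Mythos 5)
Your overall architecture is the same as the paper's: reduce via the Ker\'ekj\'art\'o--Richards classification to deleting a totally disconnected set from the compact model surface, build the model flow by suspending a minimal interval exchange transformation (with flips in the nonorientable case) so that the compact suspension has a single singular point, then remove a homeomorphic copy of the totally disconnected set placed inside a Cantor set that contains the singularity, meets every other orbit at most once and avoids the separatrices, and finally rescale by a positive analytic factor to obtain completeness and an invariant complete area form. Within that scheme your Mycielski/Baire-category construction of the Cantor transversal is a legitimate alternative to the paper's argument, which instead scales the interval so the lengths are irrational and uses a rationally independent Cantor set together with the affine formula for $T^m$ to rule out two returns; either route can be made to work, although you would still have to arrange the Cantor set to pass through the singular point itself (the paper places it on the singular level $[\,\mathbb{S}^1\times\{1\}\,]$ rather than on an interior transversal precisely so that $p_0$ lies in it and an arbitrary totally disconnected set embeds into it through $p_0$), and your ``shrinking annuli around $q$'' variant would need this point spelled out.

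The genuine gap is that the heart of the theorem --- the nonorientable model flow --- is asserted rather than established. You correctly identify it as ``the main obstacle,'' but then leave it at the level of ``a renormalization argument adapted to flips'' or ``an equivariant quadratic differential on the double cover,'' and you claim without argument that the requisite combinatorial data exists exactly for $g\ge 4$. This is the nontrivial input on which everything hinges: one needs, for each $n\ge 4$, a \emph{proper minimal} interval exchange transformation with flips (generic ones have periodic orbits, as you note, and minimal ones with flips exist only for $n\ge 4$), and one must then verify, by tracking the endpoint identifications of the suspension, that there is exactly one marked point, and compute via the index/Poincar\'e--Hopf argument that the resulting nonorientable surface has genus exactly $n$. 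The paper does this by invoking the Linero--Soler theorem, which supplies a proper, minimal, uniquely ergodic $(n,n-2)$-transformation with the explicit permutation $(-3,-4,\dots,-(n-1),n,1,-2)$, and by checking the identification cycle and the genus formula; nothing in your proposal replaces this step (and ``holomorphic quadratic differential on $N_g$'' is not literally meaningful on a nonorientable surface --- at best it yields an unoriented foliation, not a vector field, which is why the suspension of a concrete minimal flipped exchange is what does the work). Without producing such an example for every $g\ge 4$, or citing a result that does, the proof is incomplete at its central step; the orientable half (Keane condition or abelian differentials in the minimal stratum) and the final deletion/rescaling arguments are fine.
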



As already mentioned, the case of orientable surfaces in Theorem~\ref{T:maingenerofinito} was proved by Beni\`{e}re~\cite{beniere}. Nevertheless, in order to make our exposition as self-contained as possible, we also include a proof of that case. 

Beni\`ere's approach for proving Theorem~\ref{T:maingenerofinito} in the orientable case relies on a geometrical method for gluing together different foliated elementary models. Once the pieces are glued, one gets a compact surface endowed with a foliation with only one singularity and whose regular leaves are all dense. Such a foliation has the additional property of admitting a closed transverse curve whose associated Poincar\'e map is an oriented interval exchange transformation. Our proof follows a kind of opposite path. For proving Theorem~\ref{T:maingenerofinito} (for both orientable and nonorientable surfaces), our approach consists in building surfaces and vector fields by suspending certain kind of interval exchange transformations. This idea is usually employed in the literature to get flows on surfaces with different properties. As far as this procedure is concerned, the reason why the case of nonorientable surfaces has remained unsolved up today has to do with 
the enormous difficulty involved in the task of constructing nonorientable minimal interval exchange transformations. Accordingly, the keystone of the proof of Theorem~\ref{T:maingenerofinito} is a recent work by A. Linero and the third author which fully characterizes nonorientable interval exchange transformations all whose orbits are dense (see Theorem~\ref{minlinsol} stated in Section~\ref{Giets}). Suspending an appropriate nonorientable exchange transformation for every $g \geq 4$, we get a minimal flow on the noncompact surface obtained from the nonorientable compact surface of genus $g$ after removing one point. Nonetheless, in order to achieve minimal flows on any nonorientable noncompact surface of finite genus $g$, additional nontrivial work is still needed: one has to remove a Cantor set of points from the compact surface in such a way that the restricted flow is still minimal. This task follows similar ideas to those presented in Beni\`ere's work. 

Generalizing his geometrical method, Beni\`{e}re also established the minimality for orientable surfaces of infinite genus. When dealing with nonorientable surfaces of infinite genus, we can prove the following result. 

\begin{mtheorem}\label{T:maingeneroinfinito}
 There exist nonorientable surfaces of infinite genus which posses minimal area preserving complete real analytic vector fields.
\end{mtheorem}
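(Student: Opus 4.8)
The plan is to manufacture a single infinite–genus example out of infinitely many copies of a fixed finite–genus building block supplied by Theorem~\ref{T:maingenerofinito}, glued together along flow boxes in the spirit of Beni\`ere's infinite construction. Fix once and for all a minimal, area preserving, complete, real analytic vector field $X_0$ on $M_0:=N_4\setminus\{p_0\}$ (the compact nonorientable surface of genus $4$ with one point removed); such an $X_0$ exists by Theorem~\ref{T:maingenerofinito}, and it may be taken to be the suspension of a nonorientable minimal interval exchange transformation, cf.\ Theorem~\ref{minlinsol}. Let $(M_k,X_k,\theta_k)$, $k\ge 1$, denote copies of $(M_0,X_0,\theta_0)$, where $\theta_k$ is the $X_k$-invariant area form.

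In each $M_k$ select two disjoint \emph{flow boxes} $B_k^-$, $B_k^+$, i.e.\ real analytic charts defined on $(-1,1)\times(-1,1)$ in which $X_k=\partial_y$ and $\theta_k=dx\wedge dy$, and whose central transverse segments are met by a dense orbit (automatic here, since every orbit of $X_k$ is dense). Delete a small closed sub-box from the $+$ box of $M_k$ and from the $-$ box of $M_{k+1}$, and glue the two resulting windows by a real analytic diffeomorphism sending $\partial_y\mapsto\partial_y$ and $dx\wedge dy\mapsto dx\wedge dy$; concretely this reconnects each trajectory that used to leave $M_k$ through $B_k^+$ with a trajectory that used to enter $M_{k+1}$ through $B_{k+1}^-$, and vice versa. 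Performing these surgeries for every $k\ge 1$ produces an open connected surface
\[
S \;=\; M_1 \,\#\, M_2 \,\#\, M_3 \,\#\,\cdots
\]
equipped with a globally defined real analytic vector field $X$ (equal to $X_k$ off the gluing regions) and a globally defined area form $\theta$ with $\mathcal{L}_X\theta=0$; completeness of $X$ is inherited from the $X_k$ together with the fact that each gluing region takes a definite amount of time to cross, so an integral curve defined on a finite interval can meet only finitely many blocks. Since $N_4\,\#\,N_4=N_8$ and, inductively, each successive connected sum raises the nonorientable genus by $4$, the surface $S$ is nonorientable of infinite genus (and, being built from surfaces without boundary, it has no combinatorial boundary, in accordance with the conjecture of the abstract).

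It remains to check that $X$ is minimal, which is the heart of the matter. The surgery is \emph{transparent}: the only orbits it affects are those crossing the central transverse segments of the glued boxes, and these were cut along arcs met by dense orbits, so no singularity, no periodic orbit and no proper minimal set is created. Minimality of each $X_k$ then forces every orbit entering $M_k$ to accumulate on the transverse segment inside $B_k^+$ (resp.\ $B_k^-$), hence to flow into $M_{k+1}$ (resp.\ $M_{k-1}$); iterating, every orbit of $X$ visits every block $M_k$, and the minimality that survives inside each block upgrades ``visits $M_k$'' to ``is dense in $M_k$'', hence to ``is dense in $S$''. The delicate step — and the reason this produces only an example, not the full conjecture — is precisely this propagation of density along the infinite chain carried out simultaneously with the maintenance of real analyticity and area preservation and with the realization of the prescribed infinite–genus topology; I would handle it by transcribing Beni\`ere's infinite gluing scheme verbatim, substituting the nonorientable building blocks of Theorem~\ref{T:maingenerofinito} for his orientable elementary models and checking at each stage that the bad invariant sets forbidden above do not appear.
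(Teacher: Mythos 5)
There is a genuine gap, and it sits exactly at what you yourself call ``the heart of the matter''. After you delete the sub-boxes and re-glue, the blocks $M_k$ are no longer invariant and the dynamics inside each block is no longer that of $X_k$: every orbit of the new field that enters the surgery region is rerouted, so the first-return map to a transversal is a \emph{modified} interval exchange whose minimality must be proved from scratch. Consequently the step ``minimality of each $X_k$ forces every orbit entering $M_k$ to accumulate on the transverse segment inside $B_k^+$'' is not available (it invokes a property of a flow that no longer exists on $M_k$), and the upgrade from ``visits every block'' to ``is dense in $S$'' is likewise unjustified -- in an infinite chain an orbit can perfectly well drift towards the ends and fail to be dense even if it meets every block, and gluing two minimal pieces along flow boxes can easily create nontrivial saddle connections or proper minimal sets unless the gluing loci are chosen with arithmetic care. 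Deferring all of this to ``transcribing Beni\`ere's infinite gluing scheme verbatim with nonorientable blocks'' is not a proof: controlling exactly this phenomenon in the nonorientable setting is the content of the theorem, and it is what the paper does explicitly. (There are also smaller unresolved points: the analytic gluing of the field and of the area form along the boundary of a removed box, with its corners and tangency points, and the completeness of the resulting form, are asserted rather than arranged.)

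For comparison, the paper avoids surface surgery altogether. It first proves Proposition~\ref{P:mainiet} (Proposition~\ref{T:ietinfinita}): a minimal interval exchange transformation with flips and infinitely many discontinuities, built by iterating the modification procedure of Lemma~\ref{minBasic} on shrinking intervals $\Delta_i$ subject to conditions \ref{E1}--\ref{E6}, which are precisely what rules out nontrivial saddle connections at every stage (Remark~\ref{katok}) and lets density pass to the limit map $S$. It then suspends this $\infty$-\iet\ as in Section~\ref{flowgiet}: Lemma~\ref{surfacestructure} gives nonorientability from the flips, the suspension construction gives an analytic, complete, area preserving field whose minimality is equivalent to that of the exchange, and the Structure Theorem of Guti\'errez is what yields infinite genus. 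In other words, the combinatorial control that your gluing scheme would need is carried out in the paper at the level of interval exchange transformations, where it can actually be verified; if you want to salvage your approach you would have to supply an analogue of Lemma~\ref{minBasic} for the return maps created by each surgery, at which point you would essentially be reproving Proposition~\ref{P:mainiet}.
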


We emphasize that the proof of Theorem~\ref{T:maingeneroinfinito} is independent of the aforementioned Theorem~\ref{minlinsol}: the construction of such a minimal vector field  is obtained by applying the suspension method to a minimal interval exchange transformation with infinitely many discontinuities. In particular, we prove that:

\begin{mproposition}\label{P:mainiet}
 There exists a minimal interval exchange transformation with flips and with infinitely many points of discontinuity. 
\end{mproposition}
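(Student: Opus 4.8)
The plan is to build the desired map explicitly as an infinite concatenation of finitely-many-flip interval exchange transformations, each supported on a subinterval shrinking towards an endpoint, in such a way that the limit of the supports accumulates only at a single point and that the overall map is minimal.

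More precisely, I would proceed as follows. First, fix a sequence of disjoint open intervals $I_1, I_2, I_3, \dots$ partitioning $(0,1)$ up to endpoints, say $I_n = (a_{n}, a_{n-1})$ with $a_0 = 1$ and $a_n \searrow 0$, so the only accumulation point of the discontinuity set will be $0$. On each $I_n$ I would like to place a minimal interval exchange transformation \emph{with flips} on finitely many intervals --- such maps exist by the classification referenced in the excerpt (Theorem~\ref{minlinsol}) --- but taken alone these pieces would never mix different $I_n$'s, so the concatenation would be far from minimal. The second, crucial idea is therefore to interleave the blocks: instead of mapping each $I_n$ to itself, one designs the global transformation so that the orbit of a point started in $I_n$ is forced to visit every $I_m$. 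A clean way to implement this is to first build a minimal interval exchange transformation $T_0$ on $(0,1)$ with \emph{finitely many} flips whose orbits are dense (again by Theorem~\ref{minlinsol}), and then perform countably many ``local surgeries'': inside smaller and smaller subintervals, replace $T_0$ by a composition with an auxiliary finite-flip exchange that further subdivides and reshuffles (with at least one flip each time) without destroying density. Carrying this out with the subintervals nested towards $0$ yields a map $T$ which agrees with a finite-flip IET outside any neighborhood of $0$, has infinitely many discontinuities accumulating precisely at $0$, and reverses orientation on infinitely many pieces.

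The third step is to verify minimality of the resulting $T$. Here I would argue that, by construction, for each $n$ the first-return map of $T$ to $I_1 \cup \dots \cup I_n$ (or to a suitable large subinterval bounded away from $0$) is a finite-flip interval exchange transformation that can be arranged to be minimal via Theorem~\ref{minlinsol}; since these exhaust $(0,1)$ and the return maps are coherently minimal, a short Baire/countable-intersection argument upgrades this to minimality of $T$ on all of $(0,1)$. One must also check that $0$ itself, and the orbit behavior near $0$, cause no obstruction: because every point eventually returns to a region bounded away from $0$, no orbit is trapped, and density near $0$ follows from density of the finite pieces together with the fact that the supports accumulate at $0$.

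The main obstacle I anticipate is precisely the simultaneous management of two competing requirements: keeping the number of flips finite at every finite scale (so that each approximating return map falls under the hypotheses of Theorem~\ref{minlinsol}) while forcing genuine mixing between infinitely many blocks so that the global map is minimal and genuinely has infinitely many discontinuities. Getting the combinatorics of the nested surgeries right --- ensuring each surgery preserves minimality of the coarser return map, that flips are introduced at infinitely many scales, and that the discontinuity set accumulates at exactly one point --- is the delicate part; the rest is bookkeeping about interval lengths and an elementary argument that a coherent system of minimal return maps yields a minimal map.
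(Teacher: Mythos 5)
Your overall strategy --- start from a minimal finite interval exchange transformation with flips and perform countably many local surgeries on subintervals shrinking to a single accumulation point --- is essentially the paper's strategy, but the two steps you yourself flag as delicate are exactly the ones that carry all the mathematical content, and your proposal does not supply them. First, ``compose with an auxiliary finite-flip exchange \ldots without destroying density'' is a wish, not a construction: a local reshuffle of a minimal exchange need not preserve minimality, and Theorem~\ref{minlinsol} cannot help here because it is a pure existence statement (for each admissible pair $(n,k)$ there exists \emph{some} minimal $(n,k)$ exchange); it is not a criterion that can be applied to a map you have built, nor to its return maps. The paper's surgery is very specific: on an interval $\Delta=(d,f)$ containing no discontinuities one replaces $T$ by $T\circ T_\Delta$, where $T_\Delta$ is the first-return (Poincar\'e) map of $T$ to $\Delta$, and minimality of the modified map is proved through the absence of nontrivial saddle connections (Remark~\ref{katok}); this in turn forces genericity conditions on the endpoints of $\Delta$ --- $d$ and $f$ must avoid the orbits of all discontinuities and of their one-sided images, and must have disjoint orbits from each other (Lemma~\ref{minBasic}). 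Without some such verifiable mechanism your induction step has no proof.

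Second, your passage to the limit is circular as stated: you assert that the first-return maps of the limit map to an exhausting family of intervals ``can be arranged to be minimal via Theorem~\ref{minlinsol}'', but again that theorem does not let you arrange anything about a given map, and establishing that the return maps of the limit coincide with the minimal finite stages of the construction is precisely what would need proving. The paper avoids return maps of the limit altogether: it fixes an interior point $p$ with full orbit, chooses the surgery intervals $\Delta_i$ nested and shrinking to $p$, with boundaries avoiding the orbit of $p$ and with each $\Delta_i$ disjoint from a long finite orbit segment $\{S_{i-1}^j(p)\}_{j=-n_i}^{n_i}$ chosen to $1/i$-approximate a fixed dense sequence. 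This guarantees that the limit map $S$ is continuous at $p$, that the forward and backward $S$-orbits of $p$ are dense, and that every orbit closure contains $p$ (an orbit avoiding a neighbourhood of $p$ meets only finitely many $\Delta_i$, hence is an orbit of some minimal finite stage, a contradiction); minimality then follows by transporting the density of the orbit of $p$ via continuity of the iterates of $S$ at $p$. Your sketch contains no analogue of this mechanism, nor does it address continuity or orbit behaviour at the accumulation point (your endpoint $0$) beyond the unsupported claim that every orbit returns to a region bounded away from it; so, as it stands, both the inductive step and the minimality of the limit are unproven.
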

It is worth pointing out that, as far as we know, such an example is new in the literature. We conjecture that a future development in the study of interval exchange transformations will allow us to prove that any nonorientable surface of infinite genus is minimal. 

The content of the paper is organized as follows. In Section~\ref{basic} we recall some preliminary results on surfaces and Cantor sets and present the classification of noncompact surfaces due to I. Richards. In Section~\ref{Giets} we introduce the notion of generalized interval exchange transformation and summarize without proofs some results that will be used later. The proof of Proposition~\ref{P:mainiet} is presented in Section~\ref{infiniteiet}. In Section~\ref{flowgiet} we explain how to construct minimal vector fields by the suspension of interval exchange transformations. Finally, Theorems~\ref{T:maingenerofinito} and~\ref{T:maingeneroinfinito} are proved in Section~\ref{proofmainth}.

\section{Preliminary results: open surfaces and Cantor sets}\label{basic}


In what follows, for every non-negative (respectively positive) integer $g$, $M_g$ (respectively $N_g$) denotes the only, 
up to homeomorphisms, orientable (respectively nonorientable) compact surface
of genus $g$. Recall that the Euler characteristic, $\chi(\cdot)$, can be computed as  $\chi(M_g)=2-2g$ and $\chi(N_g)=2-g$.

Up to $C^r$-diffeomorphisms (with $r= \infty$ or $r=\omega$), any surface (compact or not, with or without boundary) has a unique $C^r$ structure and given any two surfaces they 
are homeomorphic if and only if they are $C^r$-diffeomorphic. In what follows, all the surfaces will be supposed to be equipped with
a compatible analytic structure. 

\def\S{\mathbb{S}}

In~\cite{richards}, I. Richards provided a complete classification of both compact and not compact surfaces. This characterization is set in the so-called \textit{Ker\'ekj\'art\'o Theorem} (see~\cite[Theorem~1, p. 262]{richards}). In the same paper, the following representation theorem for surfaces is stated (see~\cite[Theorem~3, p. 268]{richards}).

\begin{theorem}[Richards]\label{richardsth} Every surface is homeomorphic to a surface formed from the euclidean sphere $\S^2$ by first removing a compact totally disconnected set $K \subset \S^2$, then removing the interior of a finite or infinite sequence $(D_i)_i$ of disjoint closed disks in $\S^2 \setminus K$, and finally suitably identifying the boundaries of these disks in pairs (it may be necessary to identify the boundary of one disk with itself to produce a cross-cap). Moreover, when the sequence $(D_i)_i$ is infinite, for any open subset $U \subset \S^2$ containing $K$, all but a finite number of the $D_i$ are contained in $U$. \end{theorem}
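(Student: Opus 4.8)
The plan is to deduce this representation theorem from the \textit{Ker\'ekj\'art\'o Theorem} (\cite[Theorem~1]{richards}), which asserts that a connected, second countable, boundaryless surface $S$ is determined up to homeomorphism by three invariants: its genus $g\in\{0,1,2,\dots\}\cup\{\infty\}$; whether or not it is orientable; and its nested triple of ideal boundary spaces
\[
B(S)\;\supseteq\;B_{\infty}(S)\;\supseteq\;B_{\mathrm{no}}(S),
\]
where $B(S)$ is the space of ends of $S$, $B_{\infty}(S)$ the subspace of ends at which $S$ has infinite genus (equivalently, the ends admitting no planar neighbourhood), and $B_{\mathrm{no}}(S)$ the subspace of ends admitting no orientable neighbourhood; each of these is a compact, separable, totally disconnected space, and the displayed inclusions hold because a planar surface is orientable. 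Thus it suffices to show that for every admissible value of these invariants a surface assembled from $\S^2$ exactly as in the statement realizes them; the uniqueness half of the Ker\'ekj\'art\'o Theorem then closes the argument. In particular the compact case is the classical classification of closed surfaces, recovered by taking $K=\emptyset$ together with a finite sequence of disks (identified in pairs, giving a sphere with handles; or self-identified, giving a sphere with cross-caps).

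First I would realize the end space. Since $B(S)$ is compact, separable, metrizable and totally disconnected it embeds as a compact subset of the Cantor set, hence of $\S^2$; fix such an embedding, with image $K\subset\S^2$, sending $B_{\infty}(S)$ and $B_{\mathrm{no}}(S)$ to closed subsets $K_{\infty}\supseteq K_{\mathrm{no}}$ of $K$. Being totally disconnected, $K$ has inside $\S^2$ a neighbourhood basis of open sets whose frontier misses $K$, so $\S^2\setminus K$ is exhausted by compact bordered subsurfaces and its space of ends is canonically homeomorphic to $K$; fix a decreasing sequence of such neighbourhoods $U_1\supseteq U_2\supseteq\cdots$ with $\bigcap_n U_n=K$. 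Next I would realize the genus: inside $\S^2\setminus K$ choose a sequence $(D_i)_i$ of pairwise disjoint closed disks, together with a rule for identifying their boundary circles (two at a time to create a handle, or one with itself to create a cross-cap), arranged so that the handle-disks accumulate exactly on $K_{\infty}$ and the cross-cap-disks exactly on $K_{\mathrm{no}}$, with infinitely many handles (respectively cross-caps) clustering at each point of $K_{\infty}$ (respectively $K_{\mathrm{no}}$) and only finitely many handles near any point of $K\setminus K_{\infty}$; when $g<\infty$ one uses only finitely many disks in total, in a number chosen so that the resulting genus is exactly $g$, passing if necessary through Dyck's relation ``handle plus cross-cap equals three cross-caps'' so that the orientability type also comes out right. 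This clustering requirement forces that for every open $U\supseteq K$ all but finitely many $D_i$ lie in $U$, which is precisely the nesting condition in the statement; performing the identifications yields a boundaryless surface $\Sigma$.

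It then remains to check that $\Sigma$ carries the prescribed invariants and to invoke uniqueness. Since every handle and cross-cap of $\Sigma$ sits in an arbitrarily small neighbourhood of $K$, the $U_n$ still provide a cofinal system of connected neighbourhoods of each end, so $B(\Sigma)\cong K\cong B(S)$; by the clustering arrangement the ends of $\Sigma$ of infinite genus are exactly the points of $K_{\infty}\cong B_{\infty}(S)$, and its nonorientable ends are exactly those of $K_{\mathrm{no}}\cong B_{\mathrm{no}}(S)$; the total genus of $\Sigma$ equals $g$; and $\Sigma$ is orientable precisely when no cross-cap was inserted, that is, precisely when $S$ is. By the Ker\'ekj\'art\'o Theorem $\Sigma$ is homeomorphic to $S$, and since $\Sigma$ was built from $\S^2$ in the prescribed manner this is exactly the assertion of the theorem.

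The hard part is the bookkeeping behind the second and third paragraphs: one must position the disks $D_i$ and control their nesting relative to the filtration $(U_n)$ finely enough that the surgeries create no spurious ends and that the local genus and local orientability computed at each point of $K$ match the target invariants --- in particular, an end which is a single point of $K$ yet has infinite genus must be produced by an honest accumulation of infinitely many handles there, not as an unintended byproduct. One also has to verify that the elementary moves (a handle is a pair of identified disk boundaries, a cross-cap a self-identified one) compose to the claimed global surface, which is where Dyck's relation enters in the nonorientable case. By contrast, the embedding of $B(S)$ into $\S^2$, the exhaustion of $\S^2\setminus K$ by bordered subsurfaces, and the final appeal to the Ker\'ekj\'art\'o Theorem are routine.
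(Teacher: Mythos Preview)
The paper does not prove this theorem at all: it is quoted verbatim from Richards' paper as \cite[Theorem~3, p.~268]{richards} and used as a black box, so there is no ``paper's own proof'' to compare against. Your sketch is therefore not in competition with anything in the present paper.

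As for the sketch itself, the strategy you outline---realize the prescribed invariants $(g,\text{orientability},B\supseteq B_\infty\supseteq B_{\mathrm{no}})$ by an explicit sphere-with-surgeries model and then invoke the uniqueness half of the Ker\'ekj\'art\'o--Richards classification---is exactly the route Richards follows in \cite{richards}, and your identification of the delicate point (arranging the disks $D_i$ so that their accumulation on $K$ reproduces the correct local genus and local orientability at every end, without creating extra ends) is accurate. One small caveat: the orientability datum in the classification is slightly finer than ``orientable or not''---in the nonorientable finite-genus case one must also track the parity (Dyck's relation handles this, as you note), and in the infinite-genus case one must distinguish ``infinitely nonorientable'' from the two finite nonorientability types; your sketch gestures at this but a full proof would need to spell it out. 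Otherwise the outline is sound.
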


For the case of finite genus surfaces, Theorem~\ref{richardsth} and Ker\'ekj\'art\'o Theorem imply the following:
\begin{corollary}\label{finitegenus}
Let $S$ be an orientable (respectively nonorientable) surface of finite genus $g$. Then, for any compact orientable (respectively nonorientable) surface of genus $g$, $M$, there exists a totally disconnected subset $K \subset M$ such that $M \setminus K$ is homeomorphic to $S$. Even more, if $L$ is a totally disconnected subset of $M$ which is homeomorphic to $K$, then $M \setminus L$ is also homeomorphic to $S$.
\end{corollary}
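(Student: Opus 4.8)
The plan is to derive everything from Richards' representation theorem (Theorem \ref{richardsth}) together with the Kerékjártó classification. First I would recall that a surface of finite genus $g$ has, in particular, finitely many handles/cross-caps, so in the representation of $S$ as $\S^2$ minus a totally disconnected compact set $K$ minus the interiors of a sequence of disks with boundary identifications, only finitely many disk pairs occur; the remaining structure is entirely encoded by $K$ and by whether the identifications produce an orientable or a nonorientable surface. Performing those finitely many identifications on $\S^2$ alone (ignoring $K$) produces precisely a compact surface of genus $g$ of the appropriate orientability type, call it $M_0$; the set $K$, being disjoint from all the disks, sits inside $M_0$ as a totally disconnected compact subset, and $S$ is homeomorphic to $M_0\setminus K$. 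So the first step establishes: there is \emph{some} compact surface $M_0$ (orientable iff $S$ is) of genus $g$ and \emph{some} totally disconnected $K\subset M_0$ with $M_0\setminus K\cong S$.

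The second step is to pass from this particular $M_0$ to an arbitrary compact surface $M$ of the same genus and orientability type. Since any two such are homeomorphic, fix a homeomorphism $h\colon M_0\to M$; then $K':=h(K)$ is totally disconnected in $M$ and $M\setminus K'\cong M_0\setminus K\cong S$. This gives the first assertion.

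The third step is the ``even more'' clause: if $L\subset M$ is totally disconnected and homeomorphic (as a topological space) to $K'$, I need $M\setminus L\cong S$, i.e. $M\setminus L\cong M\setminus K'$. The natural route is a homogeneity/extension argument: a homeomorphism between two compact totally disconnected (hence, after the Kerékjártó normalization, nicely embedded) subsets of a surface extends to a self-homeomorphism of the ambient surface. Concretely, I would invoke the fact that any homeomorphism between two closed totally disconnected subsets of a (connected) surface of the same ``type'' extends to an ambient homeomorphism --- this is essentially the content of the homogeneity statements accompanying Richards' theorem, and is where the hypotheses are actually used. Granting such an extension $H\colon M\to M$ with $H(K')=L$, restriction gives $M\setminus K'\cong M\setminus L$, hence $M\setminus L\cong S$, completing the proof.

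The main obstacle is the third step: the extension of a homeomorphism between two totally disconnected compacta to an ambient homeomorphism of the surface is not literally the statement of Theorem \ref{richardsth}, and one must be careful that ``homeomorphic as abstract spaces'' is enough (it is, because the relevant invariant --- how the complement's ends are distributed, which is all that Kerékjártó's theorem sees beyond $K$ itself --- is determined by $K$ up to ambient homeomorphism once the genus and orientability of $M$ are fixed). I would handle this by citing the appropriate homogeneity result from \cite{richards} (or reduce to the Cantor-set case and cite the classical fact that any two tame Cantor sets in a surface are ambiently homeomorphic), rather than reproving it.
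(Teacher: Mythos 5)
Your first two steps are essentially what the paper does with Theorem~\ref{richardsth}: since the genus is finite, only finitely many disks occur in the representation, so performing the identifications on $\mathbb{S}^2$ without removing $K$ produces a compact surface $M_0$ of the same genus and orientability class containing $K$ as a compact totally disconnected subset with $S\cong M_0\setminus K$, and transporting $K$ by a homeomorphism $M_0\to M$ settles the first assertion. Where you diverge is the ``even more'' clause. The paper obtains it directly from the Ker\'ekj\'art\'o classification theorem cited next to Theorem~\ref{richardsth}: because $K$ and $L$ are compact and totally disconnected, the end spaces (ideal boundaries) of $M\setminus K$ and $M\setminus L$ are canonically homeomorphic to $K$ and $L$; both complements have the same finite genus and orientability class as $M$, and, the genus being finite, all their ends are planar and orientable, so the classifying invariant reduces to the end space alone. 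Hence $K\cong L$ already forces $M\setminus K\cong M\setminus L$, with no ambient homeomorphism of $M$ ever constructed.

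Your route instead hinges on an extension theorem: every homeomorphism between compact totally disconnected subsets of a surface extends to (or is at least realized by) an ambient homeomorphism. That statement is true in dimension two, but it is a genuine theorem that you have not proved, and the citations you propose do not cover it: Richards' paper contains the classification and representation theorems, not a homogeneity/extension statement, and the fallback ``any two tame Cantor sets in a surface are ambiently equivalent'' is insufficient as stated, since $K$ need not be a Cantor set (it may be finite or have isolated points, and the abstract homeomorphism type of $K$ genuinely matters--- a Cantor set and a convergent sequence have non-homeomorphic complements), and even in the Cantor case you need an ambient homeomorphism carrying $K'$ onto $L$, not merely an equivalence of each with a standard model. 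So as written the key step rests on an unsupported (though correct) lemma; either prove the extension theorem (a disk-by-disk back-and-forth argument works in a surface) or, more economically, argue as the paper does, by comparing genus, orientability class and ideal boundary of the two complements and invoking the classification of noncompact surfaces.
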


\begin{remark}\label{exfinite}This corollary shows that every noncompact surface of finite genus posses a compactification which is itself a surface. In general, given a noncompact surface $S$, an embedding $h:S \to M$ of $S$ into a topological space $M$ is said to be a \textit{compactification of $S$} if $M$ is compact and $h(S)$ is an open and dense subset of $M$. Given a noncompact orientable (respectively nonorientable) surface $S$ of finite genus $g$, Corollary~\ref{finitegenus} says that there exists an embedding $h: S \to M$ with $M=M_g$ (respectively $M=N_g$) such that $K=M \setminus h(S)$ is totally disconnected. So, in particular, $h$ is a compactification of $S$. The set $M$ is also locally connected and Hausdorff (it is a surface) and $K$ is nonseparating on $M$ (i. e. for any open connected subset $U \subset M$, the set $U \setminus K$ is also connected). This additional property makes the compactification $h$ be unique in the following sense. If $M'$ is any other compac
 t Hausdorff and locally connected space for which there exists an embedding $h':S \to M'$ making $K'=M' \setminus h'(S)$ being totally disconnected and nonseparating on $M'$, it is standard that there exists a homeomorphism $F:M \to M'$ such that $(h')^{-1} \circ F \circ h$ is the identity map on $S$.
\end{remark}
Given a particular surface $S$, deciding its orientability can be a difficult task. The orientability of $S$ can be proved finding an explicit atlas compatible with the analytic structure of $S$ and for which all the transition maps between non-disjoint coordinate charts have positive Jacobian. A criterion for $S$ being nonorientable is that it contains nonorientable circles. We recall that any surface with boundary homeomorphic to $\mathbb{S} \times [-1,1]$ (respectively 
to $N_1 \setminus U$ where $U$ is the interior of a closed disk $D \subset N_1$) is said to be a \textit{closed annulus} (respectively a  \textit{M\"{o}bius band}). As is well-known, any circle 
in any surface has a neighbourhood whose closure is either a closed annulus (an orientable circle) or a closed M\"{o}bius band (a nonorientable circle).

\def\R{\mathbb{R}}
\def\N{\mathbb{N}}

To conclude this section, we present some properties about Cantor sets which we shall need in the proof of Theorem~\ref{T:maingenerofinito}. A non-empty topological space $K$ is said to be a \textit{Cantor set} if is metrizable, compact, totally disconnected and perfect (i. e. without isolated points). All the Cantor sets are homeomorphic; in particular every Cantor set is homeomorphic to the ternary Cantor set in $\R$ . Even more:
\begin{theorem}\label{T:Cantor} Let $K$ be a Cantor set and fix a point $p_0 \in K$. Then, for any compact, metric, totally disconnected space $L$ there exists an embedding $h: L \to K$ with $p_0 \in h(L)$. 
\end{theorem}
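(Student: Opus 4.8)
The plan is to reduce the statement to the classical fact that every compact metrizable totally disconnected space embeds into the Cantor set, and then to use the homogeneity of the Cantor set to slide a chosen point of the image onto $p_0$.

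First I would realize the abstract Cantor set concretely as $C=\{0,1\}^{\mathbb N}$ with the product topology; this is legitimate since all Cantor sets are homeomorphic, so we may fix a homeomorphism $\eta\colon C\to K$. Next, given the compact metric totally disconnected space $L$ (which we may assume nonempty, since otherwise $h(L)=\emptyset$ cannot contain $p_0$), I would build an embedding $h_0\colon L\to C$ in the usual way: because $L$ is compact, Hausdorff and totally disconnected it admits a basis of clopen sets, and because $L$, being compact and metrizable, is second countable, one extracts from that basis a countable family $\{U_n\}_{n\in\mathbb N}$ of clopen sets that still separates the points of $L$. Define $h_0(x)=(\chi_{U_n}(x))_{n\in\mathbb N}$, where $\chi_{U_n}$ denotes the characteristic function of $U_n$. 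Each coordinate is continuous because $U_n$ and its complement are open, so $h_0$ is continuous; it is injective because the $U_n$ separate points; and a continuous injection from a compact space to a Hausdorff space is a homeomorphism onto its image. Hence $h_0$ is an embedding.

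Finally I would exploit the group structure of $C$: coordinatewise addition modulo $2$ makes $C$ a topological group, so for each $c\in C$ the translation $\tau_c\colon C\to C$, $\tau_c(x)=x+c$, is a self-homeomorphism (indeed an involution). Fix a point $\ell_0\in L$ and put $c_0=h_0(\ell_0)$ and $c_1=\eta^{-1}(p_0)$; then $\tau:=\tau_{c_0+c_1}$ satisfies $\tau(c_0)=c_1$. The map $h:=\eta\circ\tau\circ h_0\colon L\to K$ is an embedding, being a composition of an embedding with two homeomorphisms, and $h(\ell_0)=\eta(\tau(c_0))=\eta(c_1)=p_0$, so $p_0\in h(L)$, which is what we wanted.

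The only nontrivial ingredient is the embedding $L\hookrightarrow C$, and inside it the assertion that a compact Hausdorff totally disconnected space has a basis of clopen sets; this is where compactness is really used, and it is entirely standard, so I do not expect any genuine obstacle. The reason for recording a proof at all is self-containedness, since this lemma is exactly what is needed to control the totally disconnected sets (sets of ends) that are removed from compact surfaces in the construction behind Theorem~\ref{T:maingenerofinito}.
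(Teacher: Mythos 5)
Your proof is correct, and it takes a genuinely different route from the paper's. For the embedding itself, you prove it from scratch: realize the Cantor set as $\{0,1\}^{\mathbb N}$, take a countable clopen basis of $L$ separating points, and map $x\mapsto(\chi_{U_n}(x))_n$; the paper instead quotes this embedding theorem as a black box from Kuratowski. The more substantive difference is how the point $p_0$ is forced into the image. You use homogeneity of the Cantor set, obtained cheaply from the topological group structure of $\{0,1\}^{\mathbb N}$: post-compose the embedding with the translation by $c_0+c_1$, which carries the image of a chosen point of $L$ onto $\eta^{-1}(p_0)$. The paper avoids homogeneity altogether and argues by cases: if $L$ is itself a Cantor set, any homeomorphism $L\to K$ is onto, so $p_0$ is automatically hit; otherwise $L$ has an isolated point $q_0$ (here one uses the characterization of the Cantor set as the unique nonempty compact metric totally disconnected perfect space), and if $p_0\notin h(L)$ one simply redefines $h(q_0):=p_0$, which remains injective since $p_0$ was not in the image, continuous since $q_0$ is isolated, and closed since the domain is compact and the target Hausdorff. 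Your argument buys uniformity (no case distinction, no appeal to the perfect/isolated-point dichotomy) and self-containedness, at the cost of invoking the group structure; the paper's buys brevity given the cited embedding theorem, and only needs the elementary redefinition trick. Your parenthetical that $L$ must be nonempty is a fair observation (the statement is vacuously false for $L=\emptyset$), and it is implicitly assumed in the paper as well.
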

\begin{proof}
This is an elementary consequence of a well-known topological result stating that any compact metric totally disconnected space has a homeomorphic copy inside any Cantor set (see~\cite[p. 285]{ku1}).

If $L$ is itself a Cantor set there is nothing to say: two any Cantor sets are homeomorphic. In the contrary case, $L$ has isolated points; let us fix one of these isolated points $q_0 \in L$. Let $h: L \to K$ be an embedding and suppose that $p_0 \notin h(L)$. It is enough then to consider the map $\tilde{h}: L \to K$ given by $\tilde{h}(q)=h(q)$ if $q \neq q_0$ and $\tilde{h}(q_0)=p_0$. This new map is also injective (because so is $h$), continuous (because $q_0$ is isolated) and closed (all the continuous maps from compact spaces to Hausdorff spaces are closed).\end{proof}

\def\Q{\mathbb{Q}}

We next present a final technical lemma regarding Cantor sets cited in~\cite[pp. 14--15]{beniere}. A detailed proof is given in Appendix~\ref{ApenB}.
A subset $K \subset \R$ is called \textit{rationally independent} if for any non-empty finite subset $F=\{x_1, \ldots, x_k\} \subset K$ there are no integers $n_1, \ldots, n_k$ such that $ n_1 x_1 + \cdots + n_k x_k \in \Q$ and $(n_1)^2 + \cdots  + (n_k)^2 \neq 0$ (i. e. such that they do not vanish simultaneously); $K$ is said \textit{rationally dependent} otherwise.

\begin{lemma}\label{L:cantor} Let $I\subset \R$ be an open interval. Any rationally independent finite set $F \subset I$ is contained in a 
rationally independent Cantor set $K \subset I$.\end{lemma}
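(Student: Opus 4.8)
The plan is to build $K$ by a Cantor scheme in which finitely many ``branches'' are reserved to carry the prescribed points of $F$, while the remaining freedom at each level is used to destroy, one at a time, all conceivable rational relations among the points of $K$.

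Fix a closed interval $[a,b]\subset I$ with $F\subset(a,b)$ and construct a nested sequence $C_0\supset C_1\supset\cdots$, where $C_n$ is a disjoint union of $2^n$ non-degenerate closed intervals, each interval of $C_n$ containing exactly two intervals of $C_{n+1}$, and with the lengths of the intervals of $C_n$ tending to $0$ as $n\to\infty$. Labelling the intervals of $C_n$ by the words $w\in\{0,1\}^{n}$ so that $J_w$ splits into the two children $J_{w0}$ and $J_{w1}$, the set $K:=\bigcap_n C_n$ consists of the points $\pi(\omega):=\bigcap_n J_{\omega|n}$, $\omega\in\{0,1\}^{\N}$, and $K$ is a Cantor set (the map $\pi$ is a continuous bijection from the Cantor space $\{0,1\}^{\N}$ onto $K$, hence a homeomorphism). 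To force $F\subset K$, write $F=\{x_1,\dots,x_k\}$; at the first level $n_0$ with $2^{n_0}\ge k$ we make each $x_j$ the \emph{left endpoint} of a distinct interval of $C_{n_0}$, and from level $n_0$ on we always keep $x_j$ as the left endpoint of the left child of its current interval. This isolates $k$ distinct ``reserved'' branches $\omega^{x_1},\dots,\omega^{x_k}$ with $\pi(\omega^{x_j})=x_j$, so that $F\subset K$ no matter how the remaining intervals are chosen.

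The heart of the matter is the choice of the remaining intervals. Enumerate as $\tau_1,\tau_2,\dots$ all the ``tasks'' $\tau=(m,(n_1,\dots,n_m),q)$ with $m\ge1$, each $n_i$ a nonzero integer and $q\in\Q$, and declare $\tau_j$ to be \emph{active from level $j$ on}. Passing from $C_{N-1}$ to $C_N$ (for $N\ge n_0$) means choosing, inside each interval of $C_{N-1}$, two disjoint closed subintervals; $k$ of the $2^N$ new intervals have their left endpoint pinned to a point of $F$, while the other $2^N-k$ have a left endpoint free to vary in an open subinterval of their parent. We require that for every active task $\tau_j$ with $m\le 2^N$ and every $m$-tuple $w_1,\dots,w_m$ of pairwise distinct words of length $N$, one has $q\notin n_1J_{w_1}+\cdots+n_mJ_{w_m}$ (a Minkowski sum of dilated intervals). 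This is a finite list of requirements, and it is satisfiable: choose the free left endpoints $\ell_w$ first; each forbidden equation $\sum_i n_i\ell_{w_i}=q$ either involves some free $\ell_w$ --- and then defines a proper affine subspace in the space of free left endpoints, since the corresponding $n_i$ is nonzero --- or involves only reserved branches, in which case (using $N\ge n_0$, so distinct reserved words carry distinct points of $F$) it reads $\sum_i n_i x_{j_i}=q$ with the $x_{j_i}$ distinct elements of $F$, which is impossible by the rational independence of $F$. A choice of free left endpoints avoiding the finitely many proper subspaces therefore exists; once it is fixed, shrinking the lengths of all level-$N$ intervals enough turns each strict inequality $\sum_i n_i\ell_{w_i}\ne q$ into the desired $q\notin n_1J_{w_1}+\cdots+n_mJ_{w_m}$, while also keeping the two children of each parent disjoint and interior to it and making all lengths smaller than $2^{-N}$.

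Granting the construction, $K$ is a Cantor set containing $F$. Suppose it were rationally dependent: there would be distinct $y_1,\dots,y_m\in K$ and integers $n_i$, not all zero, with $\sum_i n_iy_i\in\Q$; dropping the zero coefficients we may assume all $n_i\ne0$, so this is the task $\tau_j$ for some $j$, with value $q:=\sum_i n_iy_i$. Choose $N$ larger than $j$ and larger than each of the finitely many levels at which two of the branches $\omega^{(1)},\dots,\omega^{(m)}$ of $y_1,\dots,y_m$ first separate. Then the words $w_i:=\omega^{(i)}|N$ are pairwise distinct, so the requirement imposed at level $N$ gives $q\notin n_1J_{w_1}+\cdots+n_mJ_{w_m}$; but $y_i\in J_{w_i}$ forces $q=\sum_i n_iy_i\in n_1J_{w_1}+\cdots+n_mJ_{w_m}$, a contradiction. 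I expect the delicate point to be the dovetailing bookkeeping --- each task must be revisited at all levels past a threshold, and one must exploit that any rational relation among points of $K$ becomes, deep enough in the construction, a relation among points lying in distinct intervals of some $C_N$ --- together with the clean dichotomy that the only relations the construction cannot push away are those supported entirely on $F$, which the hypothesis forbids.
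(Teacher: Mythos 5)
Your proposal is correct, and it follows the same basic strategy as the paper: realize $K$ as a nested intersection of finite unions of closed intervals with lengths tending to zero, kill at each level only finitely many potential rational relations (possible because any relation among points of $K$ eventually lives on distinct intervals of some level), and invoke the rational independence of $F$ exactly for those relations supported entirely on the pinned points. Where you differ is in the bookkeeping that makes ``finitely many relations per level'' work. You enumerate all tasks $(m,(n_1,\dots,n_m),q)$ with $q\in\mathbb{Q}$ and defeat each active task at every sufficiently deep level by a genericity argument: the free left endpoints must avoid finitely many proper affine hyperplanes, and then a uniform shrinking of the intervals converts the strict inequalities at the endpoints into exclusion of the Minkowski sums. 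The paper instead avoids enumerating $\mathbb{Q}$ altogether: at level $n$ it forbids \emph{integer} values of all combinations with coefficients bounded by $2^n m$ (which suffices for rational targets after clearing denominators, since the coefficient bound grows with $n$), and it obtains the open condition on products of intervals by a continuity argument anchored at a rationally independent set of interval endpoints, produced by first enlarging $F$ via the observation that a finite rationally independent set can always be extended. Each route has a small advantage: the paper's uniform bounded-coefficient condition dispenses with the enumeration and the hyperplane-avoidance step, while your genericity argument dispenses with the need to enlarge $F$ and to keep track of independence properties of the endpoint set from one level to the next, which makes the inductive step essentially self-justifying.
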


\section{Generalized interval exchange transformations}\label{Giets}

In what follows, $\N$ will denote the set of positive integer numbers. Given any $n \in \N$ (respectively $n=\infty$) we shall write $\N_n$ to denote the set $\{1,2,\ldots,n\}$ (respectively $\N$); sometimes, in 
order to unify the  notation, we will also put $\infty=\infty+1=\infty -1$ so in particular $\N_{\infty}=\N_{\infty + 1}=\N$.

\def\cets{c.e.t.'s}
\def\cet{c.e.t.}
\def\iets{i.e.t.'s}
\def\iet{i.e.t.}
\def\Cets{C.e.t.'s}
\def\Cet{C.e.t.}
\def\Iets{I.e.t.'s}
\def\Iet{I.e.t.}
\def\giet{g.i.e.t.}
\def\giets{g.i.e.t.'s}
\def\gcet{g.c.e.t.}
\def\gcets{g.c.e.t.'s}

Let $a<b$ be two real numbers and $D$ be an open subset of $(a,b)$. The connected components of $D$ set a countable family of open intervals of $\R$; that is, there are some $n \in \N \cup \{\infty\}$ and a family of pairwise disjoint open intervals $\{I_i=(a_i,a_{i+1})\}_{i \in \N_n}$ with $D=\cup_{i \in \N_n} {I_i}$. Following \cite{gutierrez4} we say that an injective map $T:D \to [a,b]$ is a \emph{generalized interval exchange transformation in $(a,b)$}, abbreviated as  \giet, if both $D$ and its image $T(D)$ are open and dense subsets of $(a,b)$ and $T$ homeomorphically takes each connected component of $D$ onto a connected component of $T(D)$. 

In this work we will focus only on the family of \giets\ $T : D \to [a,b]$ with the extra property that the restriction of $T$ to any of the components of $D$ is an affine map of constant slope equal to $1$ or $-1$; given a \giet\ $T$ of such a family we will say that it is an \emph{interval exchange transformation (of $n$-intervals)}, abbreviate as \emph{$n$-\iet}

In Subsection~\ref{IETSANDFLIPS} we will analyse some properties of $n$-\iets\ with $n \in \N$, while the case of $\infty$-\iets \,is relegated to Section~\ref{infiniteiet}; beforehand, we present some definitions equally valid for both in Subsection~\ref{S:defs}.

\subsection{Definitions}\label{S:defs}
Let $T:D \to [a,b]$ be an $n$-\iet\ ($n \in \N$ or $n=\infty$) and $\{I_i=(a_i,a_{i+1})\}_{i \in \N_n}$ be the collection of the connected components of $D$. Observe that $T^{-1}$ is also an $n$-\iet\ The points in $\{a_i\}_{i \in \N_{n+1}}$ are called the \textit{discontinuities of $T$}. A discontinuity $a_i \notin \{a,b\}$ is said to be a \emph{false discontinuity} if $\lim_{x\to a_i^+} T(x)=\lim_{x\to a_i^-} T(x)$. In the absence of false discontinuities we say that $T$ is a \textit{proper} $n$-\iet, in whose case $T^{-1}$ is also proper. If $T$ \emph{reverses the orientation} in some of the interval $I_i$ (i. e. the slope is $-1$ in that interval) we say $T$ is an $n$-\iet\ with flips; otherwise we can say that $T$ is an \iet\ \emph{without flips} or an \emph{oriented} \iet\ When $T$ reverses the orientation exactly in $k$ components of $D$, we remark it by saying that $T$ is an interval exchange transformation of $n$-intervals with $k$-flips or, simply, \emph{an (n,k)-\iet} 

If we replace $[a,b]$ by $\S^1=[a,b]/\equiv$, (where $a \equiv b$), we receive the notion of \emph{circle exchange transformation of $n$-intervals}, abbreviated as \emph{$n$-\cet} Given an $n$-\iet\ $T:D \to [a,b]$ as above, we will denote as $T^c$ the $n$-\cet\ obtained after identifying $a$ and $b$. The meaning of the notions of $n$-\cets\ (respectively \gcets) with flips and of \emph{$(n,k)$-\cets} are obvious; same comment works for the concepts of false discontinuities and of properness. When working with $\S^1=[a,b]/\equiv$, and for the 
sake of simplicity, given any $x,y \in [a,b]$ we will still name them as $x$ and $y$ seen as points in $\S^1$ (with the only precaution that $a=b$ in $\S^1$). Given two points $x<y$ in $[a,b]$ (which are different when they are seen in $\S^1$), the set $\S^1 \setminus \{x,y\}$ posses two components, two open arcs: one of them is exactly the interval $(x,y) \subset [a,b]$ seen in $\S^1$ (under the convention above), the other one will be denoted as $(y,x)$ (this corresponds with the points $[a,x) \cup (y,b] \subset [a,b]$). 

\def\O{\mathcal{O}}

Let $T$ be an $n$-\iet\ with $n \in \N\cup \{\infty\}$(respectively an $n$-\cet). Let $x \in (a,b)$ 
 (respectively $x \in \S^1$) then the forward (respectively backward) orbit of $x$ generated by  $T$ is the set
$\O_T^+(x)=\{T^m(x): m\in\N\cup\{0\}  \textrm{  and } T^m(x) \textrm{ is defined} \}$
(respectively  $\O_T^-(x)=\{T^{-m}(x): m\in\N\cup\{0\}  \textrm{  and } T^{-m}(x) \textrm{ is defined} \}$).
The  \emph{orbit} of $x$ generated by  $T$ is $\O_T(x)=\O_T^-(x)\cup\O_T^+(x)$. Moreover, reducing in this sentence only to case of $T$ being an \iet, we define $\O_T(a)=\{a\}\cup\O_T(\lim_{x\to a^+}T(x))$ and 
$\O_T(b)=\{b\}\cup\O_T(\lim_{x\to b^-}T(x))$. 
We say that $T$ is \emph{minimal} (respectively \textit{transitive}) if for any $x \in [a,b]$
(respectively if for some $x \in [a,b]$), $\O_T(x)$ is dense in $[a,b]$; 
this implicitly means that, in particular, $x$ has
either a \emph{full forward orbit} ($T^n(x)$ is 
defined for any $n\geq 0$) or a \emph{full backward orbit} 
($T^n(x)$ is defined for any $n\leq
0$).  A point $x\in(a,b)$ is said to have \emph{full orbit} 
if it has full backward and forward orbit. 

\def\rr{
T(p)}
\def\+{\oplus}
\def\-{\ominus}
\def\X{\otimes}
\def\Card{\mathrm{Card}}
\def\orbit{\O_S(x)}

\subsection{Minimal interval exchange transformations}\label{IETSANDFLIPS}
For any pair $(n,k) \in \N^2$ with $1\leq k \leq n$ and $n + k \leq 4$, there 
are no minimal $(n,k)$-\iet\ (in fact there are no transitive $(n,k)$-\cet, 
as Gutierrez et al. proved in~\cite{gutierrez4b}). For all the rest of the pairs 
$(n,k)$ with $n \in \N$ and $1 \leq k \leq n$ it is always possible to consider 
a minimal $(n,k)$-\iet\ The role of this subsection is to clarify these claims.
 
Here and subsequently, when working with an $n$-\iet\ $T:D \to [a,b]$, for some $n \in \N$, with $D \subset (a,b)$ having as connected components the open intervals $\{I_i=(a_i,a_{i+1})\}_{1 \leq i \leq n}$ we will always assume that $a=a_1 < a_2 < \cdots <a_{n+1}=b$. 

We will write $T(a_i^\+)=\lim_{x\to a_i^+}T(x)$ for $1\leq i\leq n$  and $T(a_i^\-)=\lim_{x\to a_i^-}T(x)$ for $2\leq i\leq n+1$. We also write $T(a_1^\-)=T(a_1^\+)$ and $T(a_{n+1}^\+)=T(a_{n+1}^\-)$. A \emph{saddle  connection} for $T$ is a set $\mathcal{S}=\{a_i,T(a_i^{\X}),\dots ,T^k(a_i^{\X})=a_j\}$ with $k\geq 1$, $\X\in\{\+,\-\}$, $\mathcal{S}\cap\{a_r\}_{r=1}^{n+1}=\{a_i,a_j\}$ and possibly $i=j$. Observe that any $\iet$\ has saddle connections with $j\in\{1, n+1\}$ and $\Card (\mathcal{S})=1$ or $2$, these are called 
\emph{trivial saddle connections}. 

\begin{remark}\label{katok} When $T$ is minimal it is obvious that it has no nontrivial saddle connections (by definition of minimality we have, for every $2 \leq j \leq n$, $\mathcal{O}_T(a_j)=\mathcal{O}_T^{-}(a_j)$ is dense and therefore infinite in particular). Conversely, if $T$ has no nontrivial saddle connections, then $T$ is minimal and in fact any forward or backward orbit through any point is dense when it exists (see~\cite[Corollary~14.5.12]{katokhasselblatt}). It is important to stress that in the statement of \cite[Corollary 14.5.12]{katokhasselblatt} the hypothesis on the absence of saddle connections refers to the absence of nontrivial saddle connections.
\end{remark}

\def\codes{\mathcal{C}_n}

There is a natural
injection between the set of $n$-\iets\ and
$\codes=\Lambda^n\times S_n^\sigma$, where 
$\Lambda^n=(0,+ \infty)^n$ and $S_n^\sigma$ 
is the set of \emph{(signed) permutations}, where by a  permutation we mean an injective 
map, $\pi:\N_n=\{1,2,\dots,n\}\to \N_n^\sigma=\{-n,-(n-1),\dots,-1,1,2,\dots,n\}$, such that
 $|\pi|:\N_n\to \N_n$ is bijective. A signed permutation $\pi$ is said to be a \emph{non-standard 
permutation} if it is such that for some $i$ it holds $\pi(i)<0$ (otherwise, $\pi$ is simply a 
\emph{standard permutation}). As in the case of standard permutations,
$\pi$ will be represented by the ordered $n$-upla $(\pi(1),\pi(2),\dots,\pi(n))$. 

Let $T$ be an $n$-\iet\ like above,
then its associated coordinates in $\codes$ are $(\lambda,\pi)$ where $\lambda=(\lambda_i)_{i}$ with 
$\lambda_i=a_{i+1}-a_i$ for all $i\in \N_n$ and with $\pi(i)$ being positive (respectively negative) 
if $T|_{I_i}$ has slope $1$ (respectively $-1$) and such that, if we order the set $\{T(I_i)\}_{i=1}^n$
in a $n$-upla taking into account the usual order in $\R$, then $|\pi(i)|$ is the position of the
interval $T(I_i)$ in that $n$-upla.

Conversely, given any $(\lambda,\pi) \in \Lambda^n \times S_n^\sigma$ we can associate it an $n$-\iet, $T: \cup_{i=1}^{n} I_i \subset [0,b]\to[0,b]$, where $b=\sum_{i=1}^{n} \lambda_i$, $I_1=(0,\lambda_1)$, $I_i=(\sum_{j=1}^{i-1} \lambda_j,\sum_{j=1}^{i} \lambda_j)$ for any
$1<i\leq n$ and 
\begin{equation}\label{formulaietgen}
T|_{I_i}(x)= \left(\sum_{j=1}^{ \left|\pi\right|(i) - \frac{\sigma(\pi(i)) + 1}{2}}{\lambda_{\left|\pi\right|^{-1}(j)}}\right) + \sigma(\pi(i)) \left[x - \left(\sum_{j=1}^{i-1} \lambda_j\right)\right]
\end{equation}
for any $1 \leq i \leq n$, where $\sigma(z)=\frac{z}{\left|z\right|}$ (the sign of $z$). 

These coordinates allow us to make the identification
$T\equiv (\lambda,\pi)$.

\def\Z{\mathbb{Z}}

Notice that if $T \equiv (\lambda,\pi)$ then $T^{-1}\equiv (\mu,\tau)$ with $\tau(j)=\sigma(\pi(|\pi|^{-1}(j)))|\pi|^{-1}(j)$ and 
$\mu_j=\lambda_{|\pi|^{-1}}(j)$. Combining this fact with Equation~\eqref{formulaietgen} we see that,
for any $m\in\Z$, if $x$ is in the domain of $T^m$ then 
\begin{equation}\label{formulaiet} 
T^m(x)= \sigma(m) x + k_1(m) \lambda_1 + \cdots + k_n(m) \lambda_n
\end{equation}
with $\sigma(m) \in \{-1,1\}$ and for certain $k_1(m),\ldots, k_n(m) \in \Z$ (all depending on $x$).

\def\O{\mathcal{O}}

Minimal \iets\ and \cets\ without flips were characterized many years ago by M. Keane (see~\cite{keane}). Let $T$ be an $n$-\iet\ in $(a,b)$ without flips and with domain 
$D=\bigcup_{i=1}^{n} (a_i,a_{i+1})$. Let $\overline{T}$ be the right continuous extension of $T$ to $[a,b)$. Then, we say that $T$ satisfies the \textit{Keane condition} if $ \overline{T}^{m}(a_i) \neq a_j \text{ for all } m \geq 1, \, 1 \leq i, j \leq n \text{ and } j\neq 1.$

\begin{theorem}[Keane]\label{keaneth} Let $T$ be an oriented $n$-\iet\ that satisfies the Keane condition, then $T$ is minimal.
\end{theorem}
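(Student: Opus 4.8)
The plan is to relate the Keane condition to the absence of nontrivial saddle connections and invoke Remark~\ref{katok}, and, for a self-contained argument, to reprove that implication along the lines of Keane's original approach via the right-continuous model.

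First I would verify that, for an oriented \iet, the Keane condition is equivalent to the absence of nontrivial saddle connections of $T$; granting this, the statement follows at once from Remark~\ref{katok} (that is, from \cite[Corollary~14.5.12]{katokhasselblatt}). The one delicate point is that the Keane condition only refers to the right-continuous extension $\overline T$, whereas a saddle connection may issue from a left one-sided limit $T(a_i^{\ominus})$. However, since there are no flips, $\overline T([a_{i-1},a_i))$ is an interval whose right endpoint is either $b$ --- which can only give a trivial saddle connection --- or the left endpoint of another image interval $\overline T([a_m,a_{m+1}))$, and that left endpoint is $\overline T(a_m)$; hence $\overline T(a_i^{\ominus})=\overline T(a_m)$, so the forward orbit of $T(a_i^{\ominus})$ coincides, from its first step on, with the forward orbit of $\overline T(a_m)$, which the Keane condition already constrains.

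For an argument not appealing to that corollary I would work with the bijective right-continuous model $\overline T\colon[a,b)\to[a,b)$ (a measure-preserving piecewise translation with pieces $[a_i,a_{i+1})$) and with the partitions $\mathcal{Z}_N$ of $[a,b)$ into the maximal intervals on which $\overline T^0,\dots,\overline T^N$ are all translations. The heart of the matter is the following \emph{Core Lemma}: under the Keane condition, $\mathrm{mesh}(\mathcal{Z}_N)\to 0$ as $N\to\infty$. To prove it, suppose otherwise; since $\mathcal{Z}_{N+1}$ refines $\mathcal{Z}_N$ and each level has finitely many atoms, a compactness (K\"onig) argument produces a nested sequence of atoms with lengths $\ge\delta>0$ whose intersection contains an open interval $J$ with $|J|\ge\delta$ on which \emph{every} power $\overline T^k$ ($k\ge 0$) is a translation. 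Then the forward images $\overline T^k(J)$ are open intervals of the fixed length $|J|$ inside the bounded interval $(a,b)$, so two of them overlap: $\overline T^k(J)\cap\overline T^{\ell}(J)\ne\emptyset$ for some $k<\ell$. Setting $p=\ell-k$ and $I=\overline T^k(J)$, the map $\overline T^p$ is a translation on $I$ by some $v$ with $|v|<|I|$ (because $I+v=\overline T^p(I)$ meets $I$). If $v\ne 0$, then, since the consecutive translates $I,I+v,I+2v,\dots$ each meet the next, $\overline T^p$ must translate every one of them again by $v$; hence $\overline T^{rp}(I)=I+rv$ for all $r\ge 0$, which forces these intervals out of $(a,b)$, a contradiction. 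So $v=0$, i.e.\ $\overline T^p=\mathrm{id}$ on $I$. Let $[\alpha,\beta)$ be the maximal interval on which $\overline T^p=\mathrm{id}$; right-continuity gives $\overline T^p(\alpha)=\alpha$. If $\alpha>a$, then $\alpha$ cannot be a continuity point of $\overline T^p$ (otherwise $\overline T^p$ would be the identity on a two-sided neighbourhood of $\alpha$), so $\overline T^{r_0}(\alpha)=a_i$ for some interior discontinuity $a_i$ of $\overline T$ and some $r_0\ge 0$, whence $\overline T^p(a_i)=\overline T^{r_0}(\overline T^p(\alpha))=\overline T^{r_0}(\alpha)=a_i$, a nontrivial saddle connection. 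If $\alpha=a$, then either $\beta<b$, and the symmetric computation at the right endpoint yields a nontrivial saddle connection of type $\ominus$, $\overline T^p(a_{i'}^{\ominus})=a_{i'}$, or $[\alpha,\beta)=[a,b)$, so that $\overline T^p=\mathrm{id}$ and $\overline T^p(a_i)=a_i$ for every interior discontinuity $a_i$ (there is one, as $n\ge 2$). In every case the Keane condition fails, which proves the Core Lemma.

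To finish, assume some orbit is not dense and choose a nonempty open interval $U\subset(a,b)$ disjoint from it; then $W=[a,b)\setminus\bigcup_{m\ge 0}\overline T^{-m}(U)$ is a nonempty proper subset of $[a,b)$, closed up to a countable set, with $\overline T(W)\subseteq W$. Since $\overline T$ is a measure-preserving piecewise translation, it permutes the connected components of $[a,b)\setminus W$ preserving lengths, except for the finitely many components that meet a discontinuity; the Core Lemma confines these exceptional excursions, and following a component of maximal length one is led either to a $\overline T$-periodic interval (excluded exactly as in the Core Lemma) or to some $\overline T^m(a_i)$ landing on an interior discontinuity --- again a nontrivial saddle connection. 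Thus every full forward orbit is dense, and the countably many points whose forward orbit meets a discontinuity are handled by applying the same conclusion to $\overline T^{-1}=\overline{T^{-1}}$, which also satisfies the Keane condition, in accordance with the notion of minimality in use. I expect the main obstacle to be the Core Lemma, and within it the step converting a bounded recurrent or periodic interval into a genuine --- and hence forbidden --- saddle connection: this is where the careful bookkeeping with the right-continuous model and with the endpoints of maximal periodic intervals is concentrated. Once the Core Lemma is in hand, the deduction of minimality is comparatively routine, the only subtle point being the treatment of the exceptional components that meet discontinuities, which is precisely where the quantitative conclusion $\mathrm{mesh}(\mathcal{Z}_N)\to 0$ gets used.
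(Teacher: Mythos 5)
The paper does not contain a proof of this statement at all: Theorem~\ref{keaneth} is quoted from Keane's paper \cite{keane} (just as Lemma~\ref{orientedIETS} is), so there is no internal argument to compare yours against. Judged on its own merits, your primary route is sound and fits the paper's framework well: reducing the Keane condition to the absence of nontrivial saddle connections and then invoking Remark~\ref{katok} (i.e.\ \cite[Corollary~14.5.12]{katokhasselblatt}) is exactly the mechanism the paper itself exploits elsewhere (e.g.\ in the proof of Lemma~\ref{minBasic}), and your treatment of the one delicate point --- that a $\ominus$-type connection issuing from $T(a_i^{\ominus})$ is, in the orientation-preserving case, governed by the genuine $\overline T$-orbit of some $\overline T(a_m)$ (or terminates trivially at $b$) --- is the right observation; to make it airtight one should add that under Keane that orbit also never meets $a$ or $b$, so the identification of iterated left limits with genuine $\overline T$-iterates persists for all time. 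Your self-contained alternative is essentially the standard argument, and the Core Lemma is handled carefully (the K\"onig extraction, the translation/overlap dichotomy forcing $v=0$, and the analysis of the maximal identity interval are all correct, including the reduction of the endpoint cases to violations of the Keane condition). The soft spots are concentrated at the end: the passage from the Core Lemma to minimality (``following a component of maximal length one is led either to a periodic interval or to a connection'') is only gestured at and is precisely where the content of \cite[Corollary~14.5.12]{katokhasselblatt} lives; likewise, the facts that the Keane condition is inherited by $\overline T^{-1}$ and that under Keane every point has a full forward or a full backward orbit are asserted rather than proved (both are short but genuine computations using the identifications $b_j=\overline T(a_m)$). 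Finally, note that the statement implicitly requires $n\ge 2$ (for $n=1$ the Keane condition holds vacuously while the identity is not minimal); you use this tacitly when you invoke the existence of an interior discontinuity.
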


A permutation $\pi: \N_n \rightarrow \N_n^{\sigma}$ is called \textit{irreducible} if, for any $1 \leq l < n$, $\left|\pi(\{1,2,\ldots,l\})\right| \neq \{1,2,\ldots,l\}$. Naturally, if $T \equiv (\lambda,\pi)$ is a minimal \iet\, then $\pi$ must be irreducible. On the other hand, when $T$ has no flips and the components of $\lambda$ are rationally independent, the converse is also true. We formalize this statement in the following lemma (stated and proved in~\cite{keane}).

\begin{lemma}\label{orientedIETS} If $T \equiv (\lambda,\pi)$ is an \iet\ without flips, $\pi$ is irreducible and the components of $\lambda$ are rationally independent, $T$ satisfies the Keane condition and hence it is minimal. 
\end{lemma}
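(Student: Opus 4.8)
The plan is to prove Lemma~\ref{orientedIETS} by reducing the minimality claim to Keane's Theorem~\ref{keaneth}, so that the whole task becomes verifying that the Keane condition holds. Since $T$ has no flips and $\pi$ is irreducible, $T \equiv (\lambda,\pi)$ is an honest oriented $n$-\iet\ in the classical sense (all slopes $+1$), and Theorem~\ref{keaneth} tells us that minimality follows once we show $\overline{T}^m(a_i) \neq a_j$ for all $m \geq 1$, all $1 \leq i,j \leq n$ and $j \neq 1$, where $\overline{T}$ is the right-continuous extension of $T$ to $[a,b)$ (here $a=0$, $b=\sum \lambda_i$ in the coordinates of Equation~\eqref{formulaietgen}).

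First I would record the key algebraic consequence of Equation~\eqref{formulaiet}: in the flip-free case $\sigma(m)=1$, so for any $x$ in the domain of $\overline{T}^m$ one has
\begin{equation*}
\overline{T}^m(x) = x + k_1(m)\lambda_1 + \cdots + k_n(m)\lambda_n
\end{equation*}
for suitable integers $k_1(m),\dots,k_n(m)$ depending on $x$. Each discontinuity $a_i$ is itself an integer combination of the $\lambda_j$'s, namely $a_i = \lambda_1 + \cdots + \lambda_{i-1}$, and likewise $a_j = \lambda_1 + \cdots + \lambda_{j-1}$. Hence an equality $\overline{T}^m(a_i) = a_j$ would yield a relation of the form $\sum_{r=1}^n c_r \lambda_r = 0$ with all $c_r \in \Z$. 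The strategy is then to argue that rational independence of $\lambda$ forces all $c_r = 0$, and to check that in the case $j\neq 1$ this is incompatible with the combinatorial structure.

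Next I would handle the subtle point: showing that the integer vector $(c_1,\dots,c_n)$ arising this way is not the zero vector. This is exactly where irreducibility of $\pi$ and the constraint $j \neq 1$ enter. If $(c_1,\dots,c_n)=0$ then $\overline{T}^m(a_i)=a_j$ as a literal identity of affine expressions, which one translates into a combinatorial statement about how $\pi$ permutes the blocks: it would say that the $\overline{T}$-orbit of a left endpoint $a_i$ returns to a left endpoint $a_j$ with $j \neq 1$ while staying ``rigid'', and by tracking which subintervals are visited one sees this produces an invariant proper initial segment $\{1,\dots,l\}$ under $|\pi|$, contradicting irreducibility. (Concretely: the smallest such return configuration singles out a set of indices closed under the permutation, i.e. a reducibility witness.) Once $(c_1,\dots,c_n) \neq 0$, the definition of rational independence — no nontrivial integer relation $\sum n_r \lambda_r \in \Q$, in particular none equal to $0$ — immediately rules out $\sum c_r \lambda_r = 0$, so no such $m,i,j$ exists, the Keane condition holds, and Theorem~\ref{keaneth} gives minimality.

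The main obstacle I expect is precisely the combinatorial bookkeeping in the previous paragraph: carefully showing that a ``degenerate'' (coefficient-zero) return of a discontinuity to a non-initial discontinuity forces $\pi$ to be reducible. The cleanest route is probably to invoke the standard fact — already implicit in Keane's original paper, which the excerpt cites for Lemma~\ref{orientedIETS} — that for an oriented \iet\ with rationally independent lengths, the Keane condition is equivalent to irreducibility of $\pi$; the nontrivial direction is this combinatorial implication, and one can either reproduce Keane's argument or cite it. In the write-up I would make explicit that the only place rational independence is used is to convert ``$\sum c_r\lambda_r = 0$ with some $c_r \neq 0$'' into a contradiction, and the only place irreducibility is used is to guarantee $(c_r)_r \neq 0$; separating these two uses makes the proof transparent and short.
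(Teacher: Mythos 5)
The paper gives no argument of its own for this lemma: it simply records that the statement is ``stated and proved in \cite{keane}''. Your outline is exactly the standard argument underlying that citation, and its structure is sound: reduce to Theorem~\ref{keaneth}, use Equation~\eqref{formulaiet} (which extends to $\overline{T}$ by right-continuity, a point worth one line in a write-up) to turn a hypothetical relation $\overline{T}^m(a_i)=a_j$ into an integer relation $\sum_r c_r\lambda_r=0$, invoke rational independence to force $c_r=0$ for all $r$, and then rule out the ``formal'' (all-coefficients-zero) return to a discontinuity $a_j$ with $j\neq 1$ via irreducibility of $\pi$. The only part you do not actually prove is precisely this last combinatorial step, which you sketch and then propose to cite from Keane; since that step is the real content of \cite{keane}'s proof and the paper itself cites it wholesale, your proposal is acceptable as written, but be aware that if you chose to ``reproduce Keane's argument'' rather than cite it, the invariant-initial-segment bookkeeping you allude to would be the entire work of the proof, not a routine verification.
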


For the case of \iets\ with flips things changes substantially, see \cite{angostosoler} for a characterization of minimality. A. Linero and the third author have recently obtained~\cite{linsol} the following result,
which will play an essential role in this paper.

\begin{theorem}[Linero and Soler]\label{minlinsol} Let $n\geq 4$ and 
$1 \leq k \leq n$. Then there exists a  proper, 
minimal and uniquely ergodic $(n,k)$-\iet
\end{theorem}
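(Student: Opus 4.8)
The plan is to build, for every admissible pair $(n,k)$, a \emph{self-similar} $(n,k)$-\iet: one that is carried to a rescaled copy of itself by a suitable renormalization (Rauzy--Veech-type) operator. Self-similarity yields minimality and unique ergodicity simultaneously and by soft Perron--Frobenius arguments, and this is the right thing to aim for because, in contrast with the oriented case, a typical \iet\ with flips fails to be uniquely ergodic, so no genericity statement can be invoked here. Throughout, minimality will be obtained through the criterion of Remark~\ref{katok}: an \iet\ is minimal as soon as it has no nontrivial saddle connection.

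The first task is combinatorial. For each $(n,k)$ with $n\geq 4$ and $1\leq k\leq n$ one has to exhibit an irreducible signed permutation $\pi_{n,k}\in S_n^{\sigma}$ with exactly $k$ negative entries lying on a loop $\gamma$ in the Rauzy graph of \iets\ with flips whose incidence matrix $A_{\gamma}$ (the nonnegative integer matrix expressing the lengths of the renormalized map in terms of the old ones) is primitive, that is, some power of $A_{\gamma}$ has all its entries positive. I would dispatch the base cases $n=4$ (for $k=1,2,3,4$) and $n=5$ by inspecting the relevant Rauzy classes by hand, and then pass from $n$ to $n+1$ by an ``interval insertion'' move adding either one orientation-preserving branch (leaving $k$ unchanged) or one orientation-reversing branch (turning $k$ into $k+1$); realized as the inverse of an elementary Rauzy step performed on a branch already visited by $\gamma$, such a move enlarges $\gamma$ to a loop of the bigger graph whose incidence matrix stays primitive. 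One also checks along the way that $\pi_{n,k}$ can be chosen proper, which only rules out finitely many degenerate permutations.

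Once the combinatorics is fixed, take $\lambda\in\Lambda^n=(0,+\infty)^n$ to be a positive Perron--Frobenius eigenvector of $A_{\gamma}$, normalized so that $\sum_i\lambda_i$ has the desired value, and set $T=T_{n,k}\equiv(\lambda,\pi_{n,k})$ by means of~\eqref{formulaietgen}. By construction the Rauzy--Veech algorithm applied to $T$ runs forever, with itinerary eventually periodic of period $\gamma$; since the induction halts only when a discontinuity is mapped onto a discontinuity, $T$ has no nontrivial saddle connection, hence $T$ is minimal (and every one-sided orbit that exists is dense) by Remark~\ref{katok}. For unique ergodicity one uses that the finite $T$-invariant Borel measures are in affine bijection with the nonnegative vectors of $\mathbb{R}^n$ fixed, up to a scalar, by the transpose length cocycle along the Rauzy--Veech itinerary of $T$; as that itinerary is $\gamma$-periodic this cone equals $\bigcap_{m\geq 0}(A_{\gamma}^{\top})^m\,\mathbb{R}^n_{\geq 0}$, which is a single ray because $A_{\gamma}$ is primitive. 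Thus $T$ admits exactly one invariant probability measure, completing the construction.

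The hard part is the combinatorial step of the second paragraph: arranging, uniformly in $(n,k)$, a periodic Rauzy path carrying a primitive incidence matrix and keeping precisely $k$ of the $n$ branches orientation-reversing all along the loop. Flips genuinely thicken the Rauzy graph --- the two elementary moves now interact with the sign pattern of the permutation and split the oriented Rauzy classes into several components --- so both the base loops and the claim that ``interval insertion'' preserves primitivity demand a meticulous case analysis, which is where most of the work in~\cite{linsol} is concentrated. A partial shortcut is to pass to the orientation double cover, an oriented \iet\ on about $2n$ intervals carrying a free $\mathbb{Z}/2$ symmetry, and to push minimality and unique ergodicity downstairs along the covering projection; but the doubled length vector is never rationally independent, so Lemma~\ref{orientedIETS} and Theorem~\ref{keaneth} do not apply as they stand, and one is driven back to a self-similar, $\mathbb{Z}/2$-equivariant choice of the cover in any case.
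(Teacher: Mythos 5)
You should first be aware that the paper does not prove Theorem~\ref{minlinsol} at all: it is quoted from Linero--Soler \cite{linsol} and used as a black box (Section~\ref{Giets} explicitly only summarizes results without proofs), so your proposal has to stand on its own, and as written it is a programme rather than a proof. The entire content of the statement --- existence for \emph{every} pair $(n,k)$ with $n\geq 4$, $1\leq k\leq n$ --- is concentrated in the combinatorial step you postpone: producing, for each such pair, a proper irreducible signed permutation with exactly $k$ flips lying on a closed Rauzy--Veech loop whose incidence matrix is primitive, together with an ``interval insertion'' induction on $n$ that stays inside the flipped Rauzy classes and preserves irreducibility, properness, the flip count and primitivity. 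None of this is carried out: no base loops for $n=4$ are exhibited, and the insertion move is not shown to exist, let alone to have the claimed properties. Since Gutierrez et al.~\cite{gutierrez4b} prove non-existence whenever $n+k\leq 4$, such loops are genuinely constrained and their existence cannot be waved through; this case analysis is exactly where the work of \cite{linsol} lies (compare Remark~\ref{R-minlinsol}, which records the explicit permutation $\pi=(-3,-4,\dots,-[n-1],n,1,-2)$ used there for $k=n-2$), so deferring it means deferring the theorem.

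There is also an analytic step that is asserted rather than proved. You pass from ``the Rauzy--Veech algorithm applied to $T$ runs forever'' to ``$T$ has no nontrivial saddle connection'' and then to minimality via Remark~\ref{katok}. That chain is borrowed from the oriented theory, where infinite iterability is equivalent to Keane's condition; with flips the typical obstruction to minimality is a flipped periodic orbit (present for almost every length vector), and the interplay between the stopping rule of the induction, connections and minimality is precisely the delicate point addressed by the characterization of minimality in \cite{angostosoler}, cited in Section~\ref{IETSANDFLIPS}. Your sketch nowhere excludes periodic orbits. For your specific maps the gap is repairable: self-similarity with a primitive incidence matrix forces the lengths of the induced intervals to contract geometrically, and from this (through the criterion of \cite{angostosoler}, or a direct argument that an interval of periodic points is incompatible with the contraction) minimality does follow, after which your Perron--Frobenius cone argument for unique ergodicity is the standard one. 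But as written the key implication is unjustified, and together with the missing combinatorics the proposal does not yet establish the theorem.
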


\begin{remark} \label{R-minlinsol} 
A  proper minimal $(n,k)$-\iet\ in $(a,b)$, $T$,  always generate a minimal $(n,k)$-\cet, $T^c$,  after identifying $a$ and $b$. However the second does not have to be proper if we receive a false discontinuity in $a\equiv b$; this occurs if for some $1 \leq i \leq n$, either $T$ preserves the orientation in both $I_i$ and $I_{i+1}$, $\lim_{x \to {a_{i+1}}^-}{T(x)}=b$ and $\lim_{x \to {a_{i+1}}^+}{T(x)}=a$ (in this case $T^c$ is a proper minimal $(n-1,k)$-\cet) or $T$ reverses the orientation in both $I_i$ and $I_{i+1}$, $\lim_{x \to {a_{i+1}}^-}{T(x)}=a$ and $\lim_{x \to {a_{i+1}}^+}{T(x)}=b$ (here $T^c$ is a proper minimal $(n-1,k-1)$-\cet). Here, we are calling $I_{n+1} = I_1$. In other words, a proper minimal $(n,k)$-$\iet$, $T$, with coordinates $(\lambda, \pi)$, produces a proper minimal $(n-1,k)$-\cet\ (respectively a minimal $(n-1,k-1)$-\cet) only if, after calling $\pi(n+1)=\pi(1)$, we have $\pi(i)=n$ and $\pi(i+1)=1$  (respectively $\pi(i)=-1$ and $\pi(i+1)=-n$) for some $1
  \leq i \leq n$. So if $T$ is proper and we suppose $T^c$ has been extended by continuity, there exists $n' \in \{n-1,n\}$ and points $\{c_i\}_{i \in \N_{n' + 1}} \subset \{a_i\}_{i \in \N_{n +1}}$ such that $T^c$ is a proper $n'$-\cet\ which exchanges the intervals $(c_i,c_{i+1})$.

The \iet\ in $(a,b)$ given by the previous result in the case $k=n-2$ can in fact be taken with the extra property of obtaining a $(n-1,k)$-\cet\ after identifying $a$ and $b$ in $[a,b]$. Indeed, in~\cite{linsol} the authors build a minimal proper and uniquely ergodic $(n,n-2)$-\iet, $T\equiv(\lambda,\pi)$,  with $\pi=(-3,-4,-5,\dots,-[n-1],n,1,-2)$.
\end{remark}

\section{Infinite interval exchange transformations and proof of Proposition~\ref{P:mainiet}} \label{infiniteiet}
Despite the fact that there are some examples of \giets\ in the literature, see for example Chacon transformations in \cite{chacon} and the interesting way of modifying \giets\ analysed by Gutierrez et al. in~\cite{gutierrez4}, we have not found such examples when dealing with $\infty$-\iets\ with flips. We dedicate this section to fill this gap.

For the sake of clarity, we divide our exposition in two subsections. In Subsection~\ref{Sub1}, we present a procedure for building new minimal \iets\ modifying the definition of a given \iet\ in certain interval. In Subsection~\ref{Sub2}, we iterate that method to construct examples of $\infty$-\iets\ with flips and, in particular, to prove Proposition~\ref{P:mainiet}.

\subsection{Modifying minimal \iets\ in intervals}\label{Sub1}
Let us consider a proper $(n,k)$-\iet\ in $(0,1)$, $T:D=\cup_{i=1}^n (a_i,a_{i+1}) \to [0,1]$, for some $1 \leq k \leq n < \infty$ and an interval $\Delta \subset [0,1]$. 

Due to the \textit{Poincar\'e Recurrence Theorem}, the set $D_{\Delta}=\{x \in \Delta \, : \, T^m (x) \in  \Delta \text{ for some } m \in \N\}$ contains almost every point of $\Delta$ (i. e. $\Delta \setminus D_{\Delta}$ has zero Lebesgue measure). Associated with this $D_{\Delta}$, we may define the \textit{Poincar\'e map} (or the \textit{first return map}) \textit{of $T$ on $\Delta$} as the map $T_{\Delta}:D_\Delta \to \Delta$ which takes every point $x \in D_{\Delta}$ to $T_{\Delta}(x)=T^{m_x}(x)$ where $m_x = \min \{m \in \N \, : \, T^m(x) \in \Delta\}$. As it is proved in~\cite[Lemma~14.5.7]{katokhasselblatt},  $T_\Delta$ determines a proper $(n_\Delta,k_\Delta)$-\iet\ in the interval $\Inte(\Delta)$ with $1 \leq k_{\Delta} \leq n_{\Delta}\leq n+2$.

\begin{remark}\label{L:factsiets}  A precise examination of the proof of~\cite[Lemma~14.5.7]{katokhasselblatt} shows that if $c\in\Inte \Delta$ is a point of discontinuity of $T_\Delta$, then there exists $m\in \N$  such that $\{T^l(c)\}_{l=1}^{m-1}\cap \Delta=\emptyset$ and either $T^m(c)=a_j$ for some $j\in\{2,\dots,n\}$ or $T^m(c) \in \partial \Delta$. \end{remark}

Combining $T$ and $T_\Delta$ we are now able to consider a new \iet\ in $(0,1)$ with more discontinuities than $T$. 

\begin{definition} We call $D_\Delta^*= (D\setminus\Delta) \, \dot{\cup} \, D_\Delta$ and consider the map $T^*_\Delta:D_\Delta^*\to[0,1]$ given by
$$
T^*_\Delta ( x ) \, = \, \left\{
\begin{array}{lcl}
                   T(x) \, , && \textrm{ if }x\in D \setminus \Delta,\\                  
                   (T\circ T_\Delta)(x) \, , && \textrm{ if }x\in D_\Delta.\\                  
\end{array}
\right.
$$
\end{definition}

It follows directly from the definition, that $T^*_\Delta$ gives an \iet\ in $(0,1)$ which has as discontinuity set the union of the discontinuity sets of $T$, $\{a_i\}_{i=1}^{n + 1}$, and of $T_\Delta$, $\{c_i\}_{i=1}^{n_{\Delta} + 1}$. Also, for every $x \in [0,1]$, $\mathcal{O}_{T^*_\Delta}(x)\subset\mathcal{O}_{T}(x)$.



\def\Tacelerada{T^*_{\Delta}}

\def\condicionuno{\partial\Delta \cap \bigcup_{i=1}^{n+1}\left({\O_T(a_i)\cup\O_T(T(a_i^{\+}))\cup\O_T(T(a_i^{\-}))}\right)=\emptyset}
\def\condiciondos{\O_T(d)\cap\O_{T}(f)=\emptyset}
\def\nocondicionuno{\partial\Delta\cap\bigcup_{i=1}^{n+1}\left(\O_T(a_i)\cup\O_T(T(a_i^{\+}))\cup\O_T(T(a_i^{\-}))\right)\not=\emptyset}
\def\nocondiciondos{\O_T(d)\cap\O_{T}(f)\not=\emptyset}

\begin{lemma}\label{minBasic} If in the procedure above we suppose that $T$ is minimal, that $\Delta=(d,f)$ does not contain discontinuity points of $T$, that 
\begin{equation}\label{condicionuno}
 \condicionuno
\end{equation}
and that \begin{equation}\label{condiciondos}
\condiciondos,     \end{equation}
then $T_\Delta^*$ is a minimal $(n_\Delta^*,k_\Delta^*)$-iet with $n_\Delta^*=2n+3$ and $k_\Delta^* \geq 1$. \end{lemma}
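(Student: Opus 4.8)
The plan is to deduce minimality of $T_\Delta^*$ from the absence of nontrivial saddle connections, as permitted by Remark~\ref{katok}. First I would pin down the combinatorial data: the discontinuity set of $T_\Delta^*$ is the union of the $n+1$ discontinuities $\{a_i\}$ of $T$ and the at most $n_\Delta+1 \leq n+3$ discontinuities $\{c_i\}$ of the first-return map $T_\Delta$; since $\Delta=(d,f)$ contains no $a_i$, the endpoints $d,f$ are genuinely new discontinuities, and a careful count (every interior $c_i$ is new, the two shared endpoints $0,1$ are counted once) gives exactly $n_\Delta^* = 2n+3$ exchanged intervals, with at least one flipped interval because $T$ itself has $k\geq 1$ flips and those flipped branches survive. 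That $k_\Delta^*\geq 1$ part is the easy bookkeeping; the substance is showing there are no nontrivial saddle connections.

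Next, suppose toward a contradiction that $\mathcal{S}=\{p_0, T_\Delta^*(p_0^{\otimes}), \dots, (T_\Delta^*)^r(p_0^{\otimes})=p_r\}$ is a nontrivial saddle connection, so $p_0$ and $p_r$ are both discontinuities of $T_\Delta^*$, none of the intermediate iterates is a discontinuity, and $p_r\notin\{0,1\}$. The key structural fact, noted after the definition, is that each $T_\Delta^*$-orbit is contained in a $T$-orbit; more precisely every application of $T_\Delta^*$ is a nonnegative power of $T$, so the finite chain $\mathcal{S}$ sits inside a single finite piece of a $T$-orbit, say $p_0, T(p_0^{\otimes}),\dots, T^M(p_0^{\otimes})=p_r$ for some $M\geq r$. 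Now I would classify $p_0$ and $p_r$ into the two types: a discontinuity of $T_\Delta^*$ is either (i) a discontinuity $a_i$ of $T$, or (ii) a discontinuity $c_i$ of $T_\Delta$, which by Remark~\ref{L:factsiets} means $c_i\in\partial\Delta=\{d,f\}$ or some forward $T$-iterate of $c_i$ lands on an interior discontinuity $a_j$ of $T$ while staying outside $\Delta$ until then. Chasing this, the endpoints $p_0,p_r$ of the putative saddle connection both lie in $\bigcup_i\bigl(\mathcal{O}_T(a_i)\cup\mathcal{O}_T(T(a_i^{\oplus}))\cup\mathcal{O}_T(T(a_i^{\ominus}))\bigr)$ together with $\{d,f\}$, and the hypotheses are designed exactly to block every case.

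The main obstacle, and where I would spend the most care, is the case analysis disposing of the chain $p_0 \rightsquigarrow p_r$ inside a $T$-orbit. If neither endpoint involves $\partial\Delta$, then $p_r\in\mathcal{O}_T(a_j)$ for an interior $a_j$ while $p_0$ also lies in the $T$-orbit of some (possibly false, hence genuine for the extended map) discontinuity; because $T$ is minimal it has no nontrivial saddle connections, and one argues that the only way for $T^M(p_0^{\otimes})$ to be an interior discontinuity $a_j$ with $p_0$ itself a discontinuity is through a trivial $T$-saddle connection — which forces $p_r\in\{0,1\}$, contradicting nontriviality of $\mathcal{S}$. The remaining cases are where $d$ or $f$ enters as $p_0$ or $p_r$: if $p_r\in\{d,f\}$, then some $T$-iterate of a discontinuity (or of $T(a_i^{\otimes})$) equals $d$ or $f$, i.e.\ $\partial\Delta$ meets $\bigcup_i(\mathcal{O}_T(a_i)\cup\mathcal{O}_T(T(a_i^{\oplus}))\cup\mathcal{O}_T(T(a_i^{\ominus})))$, contradicting hypothesis~\eqref{condicionuno}; and if the chain begins at $d$ or $f$ and ends at $d$ or $f$ without meeting a discontinuity in between, this is precisely a relation $\mathcal{O}_T(d)\cap\mathcal{O}_T(f)\neq\emptyset$ (the two boundary points of $\Delta$ being on a common $T$-orbit), which contradicts hypothesis~\eqref{condiciondos}. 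Having eliminated every configuration, $T_\Delta^*$ has no nontrivial saddle connection, so by Remark~\ref{katok} it is minimal, completing the proof.
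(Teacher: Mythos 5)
Your minimality argument follows essentially the paper's own route: reduce to the non-existence of nontrivial saddle connections via Remark~\ref{katok}, use the fact that every $T^*_\Delta$-orbit sits inside a $T$-orbit, invoke Remark~\ref{L:factsiets} to trace each interior discontinuity $c_i$ of $T_\Delta$ forward to an interior discontinuity $a_j$ of $T$ or to $\partial\Delta$, and let \eqref{condicionuno}, \eqref{condiciondos} and the minimality of $T$ kill the resulting cases; this is precisely the paper's four-case analysis, and your sketch of it is correct in outline. One caveat (shared, in fairness, with the paper's own wording of Case 4): when the putative connection starts and ends at the \emph{same} boundary point, say from $d$ back to $d$, the relation you invoke is $\mathcal{O}_T(d)\cap\mathcal{O}_T(d)\neq\emptyset$, which contradicts nothing; there one should instead use \eqref{condicionuno} to see that the relevant power $T^M$ is continuous at $d$, so $d$ would be a periodic point of $T$, contradicting minimality.

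The genuine gap is the exact count $n^*_\Delta=2n+3$, which you dismiss as bookkeeping but which is part of the statement and is not implied by what you write. The discontinuity set of $T^*_\Delta$ is the disjoint union of the $n+1$ points $a_i$ and the $n_\Delta+1$ points $c_i\subset[d,f]$ (your parenthetical about the ``two shared endpoints $0,1$'' is off: the extreme $c_i$'s are $d$ and $f$, new points, not $0,1$), so $n^*_\Delta=n+n_\Delta+1$, and the cited bound $n_\Delta\leq n+2$ from \cite[Lemma~14.5.7]{katokhasselblatt} only gives $n^*_\Delta\leq 2n+3$; passing from ``at most'' to ``exactly'' requires a lower bound on the number of discontinuities of the return map, and this is where minimality and the hypotheses must re-enter. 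Namely: since $T$ is minimal, the backward orbit of each interior discontinuity $a_2,\dots,a_n$ is dense and hence meets $\Delta$, producing at least $n-1$ discontinuity points of $T_\Delta$ in $\Inte(\Delta)$; the density of the backward orbits of $d$ and $f$ produces two further ones, distinct from each other by \eqref{condiciondos} and from the previous $n-1$ by \eqref{condicionuno}; and \eqref{condicionuno} also forces $[d,f]$ to lie inside a single interval $I_i$, so the two families of discontinuities do not interact. This yields $n_\Delta=n+2$ and hence $n^*_\Delta=2n+3$; without it your argument establishes minimality but only an upper bound on $n^*_\Delta$. (Your observation that $k^*_\Delta\geq 1$ because flipped branches of $T$ persist outside $\Delta$ is fine, given $k\geq 1$ in the standing setup.)
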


\begin{proof}
According to Remark~\ref{katok}, showing the non existence of nontrivial saddle connections for $T_\Delta^*$ is sufficient to guaranteeing its minimality. Let us proceed by contradiction and let $\mathcal{S}$ be such a nontrivial saddle connection. There exists $k\in\N$ and 
$\X\in\{\-,\+\}$ such that $\mathcal{S}$ is in of one of the following four cases. 

\begin{description}
	\item[Case 1] $\mathcal{S}=\{a_i,\Tacelerada(a_i^{\X}),\dots,(\Tacelerada)^k(a_i^{\X})=a_j\} \text{ with } 1\leq i,j\leq n+1.$ In this case $\mathcal{S}$ is clearly contained in  a nontrivial saddle connection of $T$ which contradicts its minimality. 
	\item[Case 2] $\mathcal{S}=\{a_i,\Tacelerada(a_i^{\X}),\dots,(\Tacelerada)^k(a_i^{\X})=c_j\} \text{ with } 1\leq i\leq n+1 \text{ and } 1\leq j\leq n_\Delta+1.$ Observe that, in light of \eqref{condicionuno}, $j\not\in\{1,n_{\Delta}+1\}$. 
 Remark~\ref{L:factsiets} and again \eqref{condicionuno} imply the existence  of  $h\in \N \cup \{0\}$ and $2\leq l\leq n$ for which $T^{h}(c_j)=a_l$; on the other hand, the equality $(\Tacelerada)^k(a_i^{\X})=c_j$ means in particular that there exists $m\in\N $ such that 
 $T^m(a_i^{\X})=c_j$. Then we deduce that $\mathcal{S}'=\{a_i,T(a_i^{\X}),\dots,T^{h+m}(a_i^{\X})=a_l\}$ is a nontrivial saddle connection of $T$.
	\item[Case 3] $\mathcal{S}=\{c_i,\Tacelerada(c_i^{\X}),\dots,(\Tacelerada)^k(c_i^{\X})=a_j\} \text{ with }  1\leq i\leq n_\Delta+1 \text{ and } 1\leq j\leq n+1.$ As before, \eqref{condicionuno} and Remark~\ref{L:factsiets} guarantee that  $i\not\in\{1,n_{\Delta}+1\}$ and the existence of $h \in \N \cup \{0\}$, $m\in\N$, $2\leq l\leq n$, satisfying $T^h(c_i)=a_l$ and $T^m(a_l^ \oslash)=\Tacelerada(c_i^{\X})$ for some
 $\oslash\in\{\-,\+\}$. Thus we obtain the existence of  $p\in\N$ for which $T^p(a_l^\oslash)=a_j$ and 
 $\mathcal{S}'=\{a_l,T(a_i^{\oslash}),\dots,T^{p}(a_l^{\oslash})=a_j\}$ is a nontrivial saddle connection of $T$.
	\item[Case 4] $\mathcal{S}=\{c_i,\Tacelerada(c_i^{\X}),\dots,(\Tacelerada)^k(c_i^{\X})=c_j\} \text{ with } 1\leq i,j\leq n_\Delta+1.$  First, assume that $\{i,j\}\cap\{1,n_\Delta+1\}\not=\emptyset$ then several possibilities   arises. The first one,   $\{i,j\}=\{1,n_\Delta+1\}$, cannot occur since it contradicts \eqref{condiciondos}, then either $i\in\{1,n_\Delta+1\}$ and   $j\not\in\{1,n_\Delta+1\}$   or $j\in\{1,n_\Delta+1\}$ and $i\not\in\{1,n_\Delta+1\}$. In both cases, reasoning respectively as in the second and third item, we obtain a contradiction with \eqref{condicionuno}. Assume now that $2\leq i,j\leq n_\Delta$,  reasoning in the same manner that in the previous item  we can obtain $p\in\N$, $2\leq l\leq n$  and $\oslash\in\{\-,\+\}$ for which $T^p(a_l^\oslash)=c_i$. Now we obtain the existence of $m\in \N $ and  $2\leq s\leq n$ from  Remark~\ref{L:factsiets} such that $T^m(c_j)=a_s$. Then $T^{p+m}(a_l^\oslash)=a_s$ which implies again the existence of a nontrivial saddle 
 connection for $T$. 
\end{description}

Furthermore, since $T$ is minimal we have that for every $2 \leq i \leq n$ the backward orbit $\mathcal{O}_T^-{(a_i)}$ meets the open interval $\Delta$: this produces a minimum of $n-1$ discontinuity points of $T_{\Delta}$ in $\Delta$. Moreover, conditions \eqref{condicionuno} and \eqref{condiciondos} and the density of the backward orbits of $d$ and $f$ produce two more discontinuity points of $T_{\Delta}$ different from these $n-1$ previous ones. In total there are exactly $n+1$ discontinuity points of $T_{\Delta}$ in $\Inte(\Delta)$. Since $\Delta \subset (a_i,a_{i+1})$ for some $1 \leq i \leq n$, and by condition \eqref{condicionuno} we know that in fact it must be $[d,f] \subset I_i$, we can conclude that $n_\Delta^*=2 n + 3$.
\end{proof}

\subsection{Proof of Proposition~\ref{P:mainiet}}\label{Sub2}
Let us now begin with a proper minimal $(n,k)$-\iet\ in $(0,1)$, $T:D=\cup_{i=1}^n{(a_i,a_{i+1})} \to [0,1]$, a fix dense set $\{x_i\}_{i\in\N}$ on $[0,1]$ and a point $p\in (a_1,a_2)$ with full orbit.

We shall build inductively a sequence of \iets, $(S_i)_{i \in \N \cup \{0\}}$, whose combination allows us to get an example of a minimal $\infty$-\iet\ with flips. 

We start defining $S_0=T$. 

Given any $i\in\N$, suppose $S_{i-1}$ has been already defined with the property of being a minimal \iet\ such that $p$ has full orbit under it. Hence, by Remark~\ref{katok}, there must exist a minimal natural $n_{i}$ such that for every $1\leq h \leq i$ both $\{S_{i-1}^j(p)\}_{j=0}^{n_i}$ and $\{S_{i-1}^j(p)\}_{j=-n_i}^{0}$ meet $\left(x_h-\frac1i,x_h+\frac1i\right)$. We then define $S_{i}=(S_{i-1})^*_{\Delta_{i}}$ where $\Delta_i=(d_i,f_i) \subset (p,a_2)$ is such that
\renewcommand{\theenumi}{C-\arabic{enumi}}


\begin{enumerate}
 \item\label{E1} $|\Delta_i|=f_i-d_i<d_i-p$ and, when $i\geq 2$, $f_i<d_{i-1}$;
 \item\label{E2} $f_i - p < 1/i$;
\item\label{E3} $\partial \Delta_{i} \cap \O_{T}(p) = \emptyset$;  
 \item\label{E5} If $Q_{i-1}$ is the set of discontinuities of $S_{i-1}$, then
 \begin{equation*}
\partial \Delta_{i}\bigcap \bigcup_{x\in Q_{i-1}}(\O_{S_{i-1}}(x)\cup\O_{S_{i-1}}(S_{i-1}(x^\+))\cup\O_{S_{i-1}}(S_{i-1}(x^\-)) = \emptyset,   
 \end{equation*}
and
 \begin{equation*}
\O_{S_{i-1}}(d_{i})\cap\O_{S_{i-1}}(f_{i})=\emptyset;
 \end{equation*}
  \item\label{E6}  $\Delta_i \cap \{S_{i-1}^j(p)\}_{j=-n_i}^{n_i}=\emptyset$.
\end{enumerate}

Notice that, because of Lemma~\ref{minBasic}, properties~\ref{E1} and~\ref{E5} ensure that $S_{i}$ is also a minimal \iet\ and, because of property~\ref{E3}, $p$ has also full orbit for $S_i$. 

\renewcommand{\theenumi}{\arabic{enumi}}

Let us call $E=\bigcup_{i\in\N}\Delta_i$ and observe that, for every $i \in \N$,
 \begin{enumerate}
	\item $S_{i-1}|_{\Delta_i}$ is continuous;
  \item if $x \in (0,1) \setminus E$, then $S_i(x)=T(x)$;
  \item if $x \in \Delta_i$, then $S_k(x)=S_i(x)$ for any $k\geq i$.
 \end{enumerate}

This allows us to define $S:D_S \to (0,1)$, where $D_S=D \setminus \cup_{i\in\N}{Q_i}$,  by
$$
S(x)=\left\{\begin{array}{lcl}
             S_i(x)& &\textrm{ if }x\in\Delta_i,\\
             T(x)& &\textrm{ if }x\in D \setminus E.\\
            \end{array}
\right.$$

\begin{proposition}\label{T:ietinfinita} The function $S$ is a minimal $\infty$-\iet\ with flips.
\end{proposition}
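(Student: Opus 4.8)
The plan is to verify the three defining properties of an $\infty$-\iet\ for $S$ --- namely that $D_S$ and $S(D_S)$ are open and dense in $(0,1)$, that $S$ is injective with affine branches of slope $\pm1$, and that at least one branch has slope $-1$ --- and then to prove minimality via the absence of nontrivial saddle connections, using the approximation properties built into the inductive construction.

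First I would check that $S$ is a well-defined $\infty$-\iet. The domain $D_S = D \setminus \bigcup_{i} Q_i$ is open (each $Q_i$ is finite, and the $\Delta_i$ are chosen shrinking towards $p$, so $\bigcup_i Q_i$ is closed in $(0,1)$: its only possible accumulation point is $p$, which is excluded from every $\Delta_i$ by \ref{E6} with $n_i \geq 0$, hence $p \notin D_S$ and there is no issue). Density of $D_S$ is clear since we have removed only countably many points. On each $\Delta_i$, $S$ agrees with $S_i = (S_{i-1})^*_{\Delta_i}$, which is an \iet\ by the construction of Subsection~\ref{Sub1}, hence affine of slope $\pm1$ there; off $E$, $S = T$, also affine of slope $\pm1$. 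Injectivity follows from property (3) in the list before the statement ($S_k(x) = S_i(x)$ for $k \geq i$ on $\Delta_i$) together with the fact that each $S_i$ is injective and that on $(0,1)\setminus E$ the map equals $T$: the only thing to rule out is two points from different $\Delta_i$'s or from $\Delta_i$ and the complement having the same image, and this is handled because $S_i(x) = T(T_{\Delta_i}\cdots(x))$ lies in $\O_T(x)$ while $T$ itself is injective, so one traces images back along $T$. That $S$ has flips is immediate: $S_0 = T$ already has $k \geq 1$ flips, and passing to $(S_{i-1})^*_{\Delta_i}$ only adds discontinuities (by Lemma~\ref{minBasic}, $k^*_\Delta \geq 1$), so flips are never destroyed. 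Openness and density of the image $S(D_S)$ then follow by applying the same reasoning to $S^{-1}$, which is built the same way.

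The heart of the argument is minimality, and this is where I expect the main obstacle. By Remark~\ref{katok} (the $\infty$-analogue of which one must invoke, or rather one argues directly), it suffices to show every orbit of $S$ is dense; and here the natural route is the explicit approximation encoded in the choice of $n_i$: for every $h$ and every $i \geq h$, both forward and backward pieces $\{S_{i-1}^j(p)\}_{j=0}^{n_i}$ and $\{S_{i-1}^j(p)\}_{j=-n_i}^{0}$ meet the ball $(x_h - \tfrac1i, x_h + \tfrac1i)$. The key point is to transfer this from $S_{i-1}$ to $S$: by \ref{E6}, the orbit segment $\{S_{i-1}^j(p)\}_{j=-n_i}^{n_i}$ avoids $\Delta_i$, and hence --- since $S$ and $S_{i-1}$ differ only inside $\bigcup_{m \geq i}\Delta_m \subset \Delta_i$ (using \ref{E1}: $f_m < d_{m-1}$ forces the $\Delta_m$ with $m \geq i$ to nest inside $\Delta_i$... more precisely they accumulate only at $p$ and each later one is to the left of the previous, so $\bigcup_{m\geq i}\Delta_m \subseteq (p, f_i) \subseteq \Delta_i \cup \{\text{stuff}\}$; one must be slightly careful here) --- the segment $\{S^j(p)\}_{j=-n_i}^{n_i}$ coincides with $\{S_{i-1}^j(p)\}_{j=-n_i}^{n_i}$. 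Therefore $\O_S(p)$ meets every $(x_h - \tfrac1i, x_h+\tfrac1i)$ for $i \geq h$, and since $\{x_h\}$ is dense, $\O_S(p)$ is dense in $(0,1)$.

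Finally, to upgrade density of the single orbit $\O_S(p)$ to minimality (density of every orbit) one shows $S$ has no nontrivial saddle connection. The cleanest way is to observe that the discontinuity set of $S$ is $\bigcup_i Q_i$ --- a countable set --- and that a saddle connection for $S$, being a finite $S$-orbit segment between two discontinuities avoiding all others, would live inside some $S_i$ for $i$ large enough (any finite orbit segment of $S$ through a discontinuity in $Q_j$ for $j \leq i$ agrees, for $i$ large, with the corresponding segment of $S_i$, since only finitely many $\Delta_m$ are visited and $S = S_m$ on $\Delta_m$); but each $S_i$ is minimal by Lemma~\ref{minBasic}, hence has no nontrivial saddle connection, a contradiction. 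Invoking the $\infty$-\iet\ version of \cite[Corollary~14.5.12]{katokhasselblatt} (or reproving it directly, which is routine once one knows $\O_S(p)$ is dense and there are no saddle connections), one concludes that every forward or backward orbit of $S$ that exists is dense, which is exactly minimality. The delicate bookkeeping --- making sure the nesting of the $\Delta_i$ and the ``coincidence up to finitely many steps'' claims are airtight, and that orbit points never secretly fall into a removed discontinuity --- is the part that needs care, but conditions \ref{E1}--\ref{E6} were designed precisely to make each of these go through.
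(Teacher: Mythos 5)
Your verification that $S$ is an $\infty$-\iet\ with flips and your transfer argument showing that $\O^{\pm}_S(p)$ is dense (via \ref{E6} and the coincidence of the $S$- and $S_{i-1}$-segments of $p$ up to time $n_i$) match the paper's fourth step in spirit, although note two slips: $\bigcup_{m\geq i}\Delta_m$ is \emph{not} contained in $\Delta_i$ (by \ref{E1} the $\Delta_m$ are pairwise disjoint and sit strictly between $p$ and $d_i$, so one must also use that the longer segments avoid the later $\Delta_m$'s, e.g.\ via the monotonicity of the $n_m$), and $p$ does belong to $D_S$ (it is a continuity point of $S$; the new discontinuities accumulate at $p$ from the right, which is a definitional wrinkle but not the one you describe).

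The genuine gap is in your final step, which is exactly the step the paper's proof is engineered to avoid. You pass from ``$\O_S(p)$ dense and no nontrivial saddle connections'' to minimality by invoking ``the $\infty$-\iet\ version'' of Remark~\ref{katok} (i.e.\ of \cite[Corollary~14.5.12]{katokhasselblatt}). That criterion is stated and proved only for \iets\ with finitely many discontinuities, and its proof uses that finiteness essentially (one plays the endpoints of the complementary intervals of a proper orbit closure off against a finite set of discontinuities to manufacture a saddle connection); for an \iet\ with infinitely many discontinuities no such result is available in the paper, and it is not ``routine'': a proper closed invariant set could a priori be a Cantor-type set whose gap endpoints are absorbed by infinitely many distinct discontinuities, producing no finite saddle connection, and the density of the single orbit $\O_S(p)$ does not by itself exclude this for an arbitrary point $x$. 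The paper instead argues directly: (i) every $S$-orbit is infinite; (ii) every orbit accumulates at $p$, because by \ref{E2} an orbit avoiding a neighbourhood of $p$ meets only finitely many $\Delta_i$ and is therefore an orbit of some minimal finite $S_k$; (iii) $S$, and hence each power $S^{m}$ along the orbit of $p$, is continuous at $p$, thanks to \ref{E1} and \ref{E3}; and then (iv) an $\epsilon/2$ shadowing argument combines (ii), (iii) and the density of $\O_S^{\pm}(p)$ to show every orbit is dense. Your proposal is missing the ingredients (ii) and (iii) entirely, and without them (or a genuinely new argument replacing the finite-\iet\ saddle-connection criterion) the conclusion of minimality does not follow.
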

\begin{proof} The fact that $S$ is an $\infty$-\iet\ with flips is clear. We prove the minimality of $S$ by stages.

Firstly, for any $x\in(0,1)$ the orbit $\O_S(x)$ is infinite, that is, either $x$ has full forward orbit or full backward orbit. Indeed, if for some $x\in(0,1)$ and some $\times\in\{-,+\}$ the set $\O_S^{\times}(x)$ is finite, we may take the maximal natural $i$ such that $\O_S^{\times}(x) \cap \Delta_i\not=\emptyset$ and observe that $\O_S^{\times}(x)=\O_{S_{i+1}}^{\times}(x)$. But, on account of the minimality of $S_{i+1}$, $\O_{S_{i+1}}^{-}(x)$ and $\O_{S_{i+1}}^{+}(x)$ cannot be simultaneously finite.
 
Secondly, $p\in\Cl (\orbit)$ for any $x\in(0,1)$. If  $p\not\in\Cl (\orbit)$, then  $\orbit$ would only intersect a finite number of intervals $\Delta_i$ (because of \ref{E2}) and, as in the previous paragraph, we arrive to a contradiction with the minimality of some  $S_k$, $k\in\N$.
 
Thirdly, $\lim_{x \to p^+}S(x)=T(p)$ and therefore $S$ is continuous on $p$. Indeed, fix $\epsilon>0$ and  observe that 
 $S(x)-T(x)=0$ if $x\in E^c$ and  $|S(x)-T(x)|<|\Delta_i|<x-p$ if $x\in\Delta_i$  (because of~\ref{E1}). Thus, for a sufficiently small $\delta>0$ we have that for every $x\in (p-\delta,p+\delta)$, $|S(x)-T(p)| \leq |S(x)-T(x)|+|T(x)-\rr| <2 |x-p| <\epsilon$.
 
Fourthly, both the backward and the forward orbits of $p$ generated by $S$ are dense in $[0,1]$. To prove it, it is clearly sufficient to see that, for any $h,i\in\N$ with $h \leq i$ and any $\times\in\{-,+\}$, $\O_S^\times(p)\cap\left(x_h-\frac1i,x_h+\frac1i\right)\not=\emptyset$.  
But if $h\leq i$ and $\times\in\{-,+\}$, we know that $\left\{S_{i-1}^j(p)\right\}_{j=0}^{\times n_i}\cap \left(x_h-\frac1i,x_h+\frac1i\right)\not=\emptyset$ and that, by \ref{E6}, $\left\{S^j(p)\right\}_{j=0}^{\times n_i}=\left\{S_{i-1}^j(p)\right\}_{j=0}^{\times n_i}$. 

 Finally, we take $x\in(0,1)$ and we see that for any $c\in(0,1)$ and any $\epsilon>0$, $\O_S(x)\cap (c-\epsilon,c+\epsilon)\not=\emptyset$.  Since $\O_S^\times(p)$ is dense (for any $\times\in\{-,+\}$) we can take an integer $m_2$ 
 (note that the sign can be chosen as desired) for which  
 $|S^{m_2}(p)-c|<\frac\epsilon2$. 
Observe that, because of \ref{E3} and the fact that $S$ is continuous at $p$, $S^{m_2}$ is continuous at $p$. So there exists $\delta>0$ such that if $|y-p|<\delta$ then 
 $|S^{m_2}(y)-S^{m_2}(p)|<\frac\epsilon2$. Since $p\in \Cl(\O_S(x))$, there exists also an integer $m_1$ satisfying 
 $|S^{m_1}(x)-p|<\delta$. Thus $|S^{m_1+m_2}(x)-c|\leq|S^{m_1+m_2}(x)-S^{m_2}(p)|+|S^{m_2}(p)-c|<\frac\epsilon2+\frac\epsilon2=\epsilon.$
 Then $S^{m_1+m_2}(x)\in(c-\epsilon,c+\epsilon)$ as desired.
\end{proof}

\section{Building vector fields from circle exchange transformations}\label{flowgiet}

Circle exchange transformations and vector fields are
related by means of a standard procedure called \emph{suspension of circle
exchange transformations}. In this section we introduce this procedure
following \cite[Section~6]{gutierrez4} (with minor changes). We have been also strongly inspired by 
\cite{arnoux,levitt} and \cite{capitulosoler}.

Let $D$ be an open dense subset of $(0,1)$ and $\{I_i=(a_i,a_{i+1})\}_{i \in \N_n}$ be the family of its connected components ($k \in \N$, $n \in \N \cup \{\infty\}$). Let $T: D \to [0,1]$ be a proper $(n,k)$-\iet\ (with $n \in \N \cup \{\infty\}$). When $n \in \N$, recall the convention of supposing that $0=a_1<a_2 < \cdots < a_{n+1}=1$. Take also the proper $(n',k')$-\cet\ $T^c:\cup_{i \in \N_{n'}} (c_i,c_{i+1}) \subset \S^1 \to \S^1$ associated to $T$ where $k'\in \{k-1,k\}$ and $n'\in \{n-1,n\}$ (see Remark~\ref{R-minlinsol}). Consider the set of points $\cup_{i \in \N_{n'}} \, \partial \, T^c((c_i,c_{i+1}))$ and label them as $\{b_i\}_{i \in \N_{n' + 1}} \subset [0,1]$ such as to verify, for every $i,j \in \N_{n' + 1}$, $c_i \leq c_j$ if and only if $b_i \leq b_j$. Let $\sigma: \N_{n'} \to \N_{n'}$ be a bijection such that, for every $i \in \N_{n'}$, $T^c((c_i,c_{i+1}))=(b_{\sigma(i)},b_{\sigma(i)+1})$.

\subsection{The construction of the suspended surface.}\label{SS:suspension}
Figure~\ref{F:Suspension} intends to  clarify the following construction. We start considering the noncompact $\partial$-surface $N=(\S^1\times[0,1]) \setminus \left(P_0 \cup P_1 \right)$ with $P_0 = \{b_i\}_{i \in \N_{n'}} \times \{0\}$ and $P_1 = \{c_i\}_{i \in \N_{n'}} \times \{1\}$. 

Call also $\bar{N}=\S^1\times[0,1]$ and consider both $N$ and $\bar{N}$ as $\partial$-surfaces equipped with the natural analytic structure compatible with their euclidean topological structure (as subsets of $\S^1\times\R$).

For every $i \in \N_{n'}$, we consider $h_i : [c_i,c_{i+1}] \times \{ 1\} \to [b_{\sigma(i)},b_{\sigma(i)+1}] \times \{ 0 \}$ given by the formula $h_i(x,1) = (T^c(x), 0)$ for every $x \in (c_i,c_{i+1})$ and either $h_i(c_i,1)=(b_{\sigma(i)},0)$ and 
$h_i(c_{i+1},1)=(b_{\sigma(i)+1},0)$ (if $T^c$ preserves the orientation in $(c_i,c_{i+1})$) or $h_i(c_{i+1},1)=(b_{\sigma(i)},0)$ and 
$h_i(c_{i},1)=(b_{\sigma(i)+1},0)$ (if $T^c$ reverses the orientation in $(c_i,c_{i+1})$).

Let $\mathcal{R} \subset \bar{N} \times \bar{N}$ be the smallest equivalence relation of $\bar{N}$ containing all the pairs $(p,q)$ satisfying that, for some $i \in \N_{n'}$, $p \in [c_i,c_{i+1}] \times \{ 1\}$ and $q = h_i(p)$. Consider the topological quotient $M_T=\bar{N}/\mathcal{R}$ and let $\rho: \bar{N} \rightarrow M_T$ be the associated natural projection. When $(x,y) \in \mathcal{R}$ we will also write $x \sim y$ or $\left[x\right]=\left[y\right]$. 


Since $\bar{N}$ is compact and connected so is $M_T$. The subset of $M_T$ given by $S_T=\rho(N)$ has not only topological structure but  it is also an analytic surface with boundary: $S_T$ is nothing else than a set built following the standard process of attaching surfaces along their boundaries. We collect this information in the following result.

\begin{figure}[ht]
	\centering
			\includegraphics[width=0.80\textwidth]{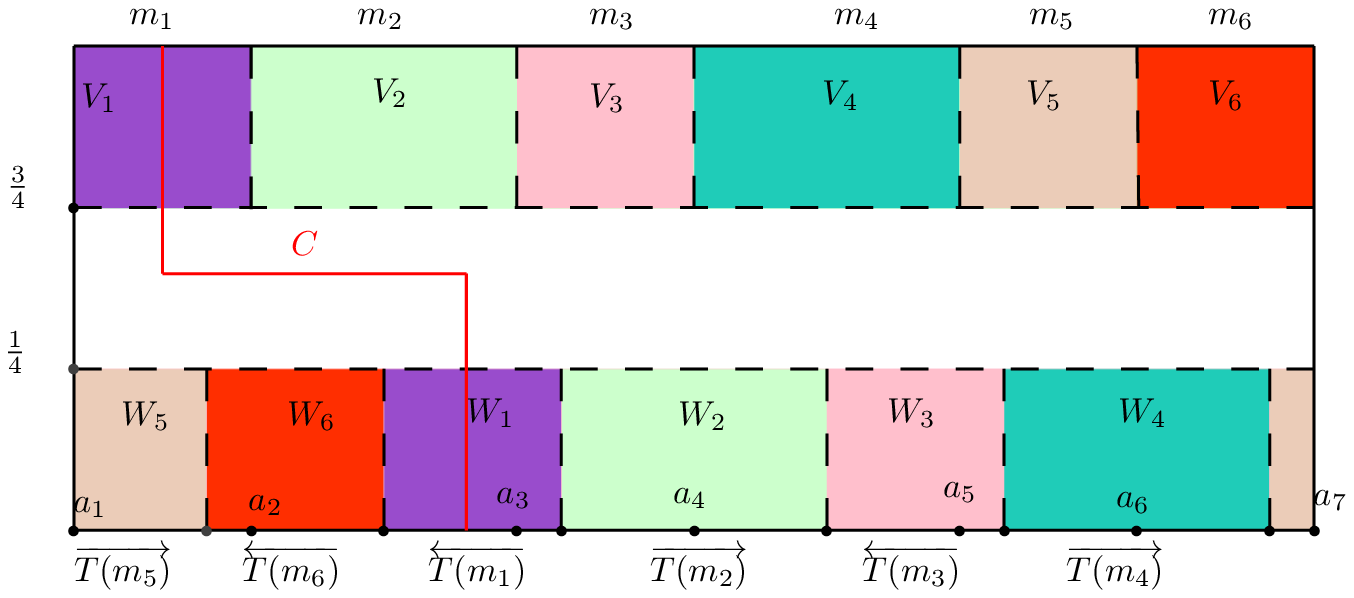} 
		\caption{Construction of $M_T$ by means of a $(6,3)$-\iet\ with $\pi=(-3,4,-5,6,1,-2)$. The circle $C$ is nonorientable. The arrows on the images of the $m_i$ mark if they are flipped by $T$}
	\label{F:Suspension}
\end{figure}

\begin{lemma}\label{surfacestructure} $S_T$ is an analytic surface. The surface is orientable (respectively nonorientable) when $T^c$ has no flips (respectively has flips).. Moreover, if $n \in \N$, $M_T$ is a compact surface which coincides with $S_T$ in genus and in orientability class.
\end{lemma}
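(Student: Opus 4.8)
The plan is to unpack the quotient construction as an honest gluing of $\partial$-surfaces along pieces of their boundaries and then read off the two required properties (analyticity of $S_T$, and the stated correspondence with $M_T$ when $n\in\N$) from standard facts about such gluings. First I would observe that $S_T=\rho(N)$ is obtained from the $\partial$-surface $\overline N=\S^1\times[0,1]$ by identifying, for each $i\in\N_{n'}$, the closed boundary segment $[c_i,c_{i+1}]\times\{1\}$ with the closed boundary segment $[b_{\sigma(i)},b_{\sigma(i)+1}]\times\{0\}$ via the affine homeomorphism $h_i$. Because $T^c$ is a proper \cet\ with slopes $\pm1$, the maps $h_i$ are restrictions of affine (hence analytic) maps, consecutive segments share exactly an endpoint, and the images $\{[b_{\sigma(i)},b_{\sigma(i)+1}]\}$ tile $\S^1\times\{0\}$ with pairwise disjoint interiors; so $\mathcal R$ identifies the circle $\S^1\times\{1\}$ with the circle $\S^1\times\{0\}$ by a piecewise-affine homeomorphism with finitely many (or, for $n=\infty$, discretely many) break points. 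Removing the finite point sets $P_0,P_1$ precisely deletes the images of these break points together with the preimages of the discontinuities of $T^c$, which is exactly what is needed so that on the open surface the gluing map restricts to a genuine analytic diffeomorphism between the two deleted boundary circles: near any glued point one has a bicollar and the transition between the two coordinate charts (one from the bottom copy, one from the top copy) is $x\mapsto T^c(x)$, affine of slope $\pm1$, hence analytic. Invoking the standard fact (already used implicitly in the excerpt, e.g. around Richards' theorem and the discussion of collars) that attaching analytic surfaces along open analytic arcs of their boundaries yields an analytic surface, we conclude $S_T$ is an analytic surface.

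Next I would settle orientability. An orientation of $\overline N=\S^1\times[0,1]$ induces on the boundary circle $\S^1\times\{0\}$ and on $\S^1\times\{1\}$ their boundary orientations; the glued object $S_T$ is orientable if and only if each $h_i$ reverses the induced boundary orientation (so the two collars fit into a single oriented collar), and it contains a nonorientable circle otherwise. Since on the segment $[c_i,c_{i+1}]\times\{1\}$ the map $h_i$ has slope $+1$ when $T^c$ preserves orientation there and slope $-1$ when $T^c$ flips, and since the boundary orientations of $\S^1\times\{0\}$ and $\S^1\times\{1\}$ are opposite as seen from inside $\overline N$, one checks that an orientation-preserving $h_i$ (slope $+1$) is compatible with a global orientation while a flipped $h_i$ (slope $-1$) forces a M\"obius band: concretely, the core circle $C$ of the glued strip over a flipped interval (the curve pictured in Figure~\ref{F:Suspension}) has a M\"obius neighbourhood. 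Hence $S_T$ is orientable precisely when $T^c$ has no flips and nonorientable precisely when $T^c$ has flips.

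Finally, for $n\in\N$, $T^c$ is a \cet\ of finitely many intervals, so $\mathcal R$ identifies $\S^1\times\{1\}$ with $\S^1\times\{0\}$ by a piecewise-affine homeomorphism of the circle with finitely many break points; thus $M_T=\overline N/\mathcal R$ is a closed surface (a compact $2$-manifold without boundary, since every boundary point of $\overline N$ is glued to another). The inclusion $S_T=\rho(N)\hookrightarrow M_T$ has complement $\rho(P_0\cup P_1)$, a finite set of points; removing finitely many points from a closed surface changes neither its genus nor its orientability class, and $S_T$ deformation retracts onto (equivalently, is homeomorphic to) $M_T$ minus that finite set — both are connected because $\overline N$ is. Therefore $M_T$ is a compact surface agreeing with $S_T$ in genus and in orientability class. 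The main obstacle is the first step: verifying rigorously that the combinatorics of the break points of $T^c$ and of the deleted sets $P_0,P_1$ match up so that the quotient really is locally Euclidean (and analytically so) at the glued points, rather than producing a singular identification; this is where the properness of $T^c$ and the careful bookkeeping of $\{b_i\}$, $\{c_i\}$ and $\sigma$ in the construction are used, and it is essentially the content of the ``standard process of attaching surfaces along their boundaries'' cited in the excerpt.
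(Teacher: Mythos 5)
Your first two steps (the analytic structure on $S_T$ obtained by gluing $\bar{N}$ to itself along the open boundary arcs via the affine maps of slope $\pm1$, and the orientability dichotomy: consistent orientation when there are no flips, a M\"obius band around the core circle of a flipped strip otherwise) are essentially the paper's argument, which just makes the ``standard gluing fact'' explicit by writing down the charts $\Phi_i$ on $\rho(V_i\cup W_i)$ and checking that the transition maps are analytic with Jacobian $\pm1$. The genuine gap is in your last step, where you claim that $\mathcal R$ ``identifies $\S^1\times\{1\}$ with $\S^1\times\{0\}$ by a piecewise-affine homeomorphism of the circle'' and that $M_T$ is therefore a closed surface ``since every boundary point of $\bar N$ is glued to another''. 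The identification is \emph{not} induced by any homeomorphism of the boundary circles: $T^c$ is discontinuous at the $c_i$, and the endpoint conventions for the $h_i$ send the point $(c_i,1)$, viewed from its two adjacent segments, to two different points of $\S^1\times\{0\}$ (if the gluing were a circle homeomorphism, $M_T$ would be a torus or a Klein bottle, contradicting the genus computed later in the paper). Consequently the classes of the corner points contain $2k\geq 4$ boundary points, and the fact that every boundary point is glued to some other one does not show that $M_T$ is locally Euclidean there: a quotient in which several boundary half-disks meet at one point is a surface only if those half-disks glue edge-to-edge in a single cycle, so that the link of the point is a circle.

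So either you supply that link argument at each marked point (using properness of $T^c$ and the fact that each bottom segment is paired with exactly one top segment, so the half-disk edges match up in pairs and each equivalence class corresponds to one cycle), or you argue as the paper does: check that $M_T$ is compact, connected and Hausdorff, that $M_T\setminus S_T$ is a finite (hence totally disconnected) nonseparating set, and then invoke Corollary~\ref{finitegenus} together with the uniqueness of such compactifications stated in Remark~\ref{exfinite} to conclude that $M_T$ is a compact surface coinciding with $S_T$ in genus and orientability class. Note also that your ``removing finitely many points changes neither genus nor orientability'' step presupposes that $M_T$ is already known to be a surface, so it cannot replace this verification; and, as a minor point, for $n=\infty$ the discontinuities of an $\infty$-\iet\ accumulate, so the break points are not discrete, though this does not affect the lemma because the claim about $M_T$ is made only for $n\in\N$.
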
 
\begin{proof}
We start defining, for every index $i \in \N_{n'}$, two open subsets of $N$ as $V_{i} = (c_i,c_{i+1}) \times (3/4,1]$ and $W_{i} = (b_{\sigma(i)},b_{\sigma(i)+1}) \times [0,1/4)$ and a continuous maps $\phi_{i} : V_{i} \cup W_{i} \rightarrow (b_{\sigma(i)},b_{\sigma(i)+1}) \times (-1/4,1/4)$ by
\begin{equation}\label{chart1} \phi_{i}(x,t)= \begin{cases} (T^c(x),t - 1), \, \,  \text{ if } (x,t) \in V_{i}, \\ 
                                (x, t), \, \, \text{ if } (x,t) \in W_{i}.
\end{cases} \end{equation}

Since the restrictions of $\phi_{i}$ to $V_{i}$ and to $W_{i}$ are both embeddings with closed images in $(b_{\sigma(i)},b_{\sigma(i)+1}) \times (-1/4,1/4)$, it follows that $\phi_{i}$ is not only continuous but also closed. Moreover, since $\phi_{i}$ 
 take the same value in all points $(x,y)$ and $(x',y')$ such that $\rho(x,y)=\rho(x',y')$, 
we can define a map $\Phi_{i}: \rho(V_{i} \cup W_{i}) \rightarrow (b_{\sigma(i)},b_{\sigma(i)+1}) \times (-1/4,1/4)$ by $\Phi_{i}(\rho(x,t))=\phi_{i}(x,t)$ for every $(x,t) \in V_{i} \cup W_{i}$. This map is bijective and the continuity and closeness of $\phi_{i,j}$ guarantees that it is, in fact, an homeomorphism and, in particular, that $\rho(V_{i} \cup W_{i})$ is a surface. On the other hand, the restriction of $\rho$ to $\Inte N$ is an embedding and, therefore, every point in $S_T$ posses an open neighbourhood homeomorphic to an open connected subset of $\mathbb{R}^2$. 

The fact that $S_T$ can be rewritten as a countable union of open subsets each of them being second countable shows that $S_T$ is second countable itself: $S_T= \rho(\Inte N) \cup \bigcup_{i \in \N_{n'}}{\rho(V_{i} \cup W_{i})}$. This union also allows us to claim the connectedness of $S_T$: all the sets in the union of the left term in the equality are connected and all of them meet $\rho(\Inte N)$. Finally, $S_T$ is clearly Hausdorff. All together shows that $S_T$ is a surface.

Let us denote by $(x,y)$ the components of the identity map on $\Inte N$ and by $(x^i,y^i)$ the components of the map $\Phi_i$ for every $i \in \N_{n'}$. As an atlas for $S_T$ we can take the collection of the following coordinates charts
$ \{(\rho(\Inte N), (x,y))\}\cup\{(\rho(V_{i} \cup W_{i}), (x^{i}, y^{i}))\}_{i \in \N_{n'}}$. It is an immediate computation to verify that the transition maps associated to these coordinate charts are all given by analytic maps and, when $T$ has no flips, have all positive Jacobian determinant everywhere in their domains. Indeed, the only coordinate neighbourhoods with non-empty intersections are the pairs $\{\rho(\Inte N), \rho(V_{i} \cup W_{i})\}_i$. Let $i \in \N_{n'}$, we have $\rho(\Inte N) \cap \rho(V_{i} \cup W_{i})=\rho (\Inte V_{i} \cup \Inte W_{i})$. Let $F_{i}$ be the restriction of $\phi_{i}=\Phi_{i} \circ \rho$ to $\Inte V_{i}$ in its domain and to $\phi_{i}(\Inte V_{i})$ in its codomain and with formula $F_{i}(x,y)=\phi_{i}(x,t)$ where the last term is computed as in equation~\eqref{chart1}. The transition map from $(\rho(\Inte N), (x,y))$ to $(\rho(V_{i} \cup W_{i}), (x^i,y^i))$ is the map $H_i:\Inte V_{i} \cup \Inte W_i \to \phi_{i}(\Inte V_{i} \cup \Inte W_i)$ defi
 ned as $H_i(x,y)=F_i(x,y)$ if $(x,y) \in \Inte V_i$ and as $H_i(x,y)=\phi_i(x,y)=(x,t)$ otherwise. All these transition maps are therefore analytic diffeomorphisms. Moreover, their Jacobian matrices are trivial. When $T$ preserves the orientation in $(c_i,c_{i+1})$, the map $F_{i}$ has as Jacobian matrix at any point in its domain the identity; when $T$ reverses the orientation in $(c_i,c_{i+1})$ the Jacobian matrix of $F_{i}$ at any point $(x,t)$ equals $ \left( \begin{smallmatrix} -1& 0\\ 0& 1 \end{smallmatrix} \right)$. 
From here we can already conclude that $S_T$ is an orientable surface when $T$ has no flips (in such a case all the transition maps have the the identity as Jacobian matrix so in particular all have positive determinant and hence we have an analytic and consistently oriented atlas for $S_T$). On the other hand, when $T$ has flips, say for example its slope equals $-1$ in $(c_i,c_{i+1})$ and take an open $\Gamma$ arc in $\Inte N$ having as endpoints the middle points of $(c_i,c_{i+1})\times \{1\}$ and $(b_{\sigma(i)},b_{\sigma(i)+1}) \times\{0\}$, then $\rho(\Cl(\Gamma))$ is clearly a nonorientable circle, see the circle $C$ in Figure~\ref{F:Suspension}.

Suppose now that we are in the case $n \in \N$. To begin with, a direct examination on the family of open subsets of $M_T$ show that $M_T$ is not only compact and connected but also Hausdorff. Besides, the set $M_T \setminus S_T$ is totally disconnected and nonseparating on $M_T$. Corollary~\ref{finitegenus} together with the Remark~\ref{exfinite} finish the proof.\end{proof}

In what follows, we shall refer to $S_T$ as the \textit{suspended surface associated to $T$}. When $T$ is an $n$-\iet for some $n \in \N$, $M_T$ will be called the \textit{compact suspended surface associated to $T$}. In this last case, the points in the set $\mathcal{M}_T=\{\left[(c_i,1)\right] : 1 \leq i \leq n' \}$ are called the \textit{marked points of $M_T$}.

\subsection{The construction of the suspended vector field}\label{S:suspendedflow}
With the notation introduced in the proof of Lemma~\ref{surfacestructure}, we can consider a vector field $X$ (respectively a $2$-form $\theta$) on $S_T$ defined in local coordinates as 
$$ X_p=\left. \frac{\partial}{\partial y}\right|_p \, \, \text{(respectively } \theta_p= d x _p \wedge d y _p\text{)}, \, \, \, \, \, \text{ if } p \in \rho(\Inte N) $$
and $$ X_p=\left. \frac{\partial}{\partial y^{i}}\right|_p \,  \text{(respectively } \theta_p= d x^i _p \wedge d y^i _p\text{)}, \, \, \, \, \, \text{ if } p \in \rho(V_{i} \cup W_{i}).
$$


The trivial shapes which have all the Jacobian matrices associated to the transition maps guarantee that these definitions are consistent (they agree in the non-disjoint coordinate neighbourhoods). Furthermore, $X$ and $\theta$ are analytic in the whole $S_T$ because so are its local representatives.

As a direct computation reveals, the interior product of $\theta$ with $X$ equals $\iota_X(\theta)=\alpha$ where $\alpha$ is the $1$-form which in local coordinates is given by $\alpha_p=-d x _p$ if $p \in \rho(\Inte N)$ and by $\alpha_p=- d x^i_p$ if $p \in \rho(V_{i} \cup W_{i})$. Thus, $X$ preserves the area associated to $\theta$.

Now, observe that $[\S^1 \times \{1/2\}]$ is a transverse circle to any orbit of $X$ and the Poincar\'e first return map which is defined over it is given exactly by $T^c$ (when we see $T^c$ as a map from $\S^1 \times \{1/2\}$ to $\S^1 \times \{1/2\}$ after naturally identified $\S^1 \times \{1/2\}$ with $\S^1$). This simple observation allow us to claim that $X$ is an analytic minimal vector field if and only if $T^c$ is minimal.


Associated to $X$ we can always take an analytic positive map $f_T:S_T \to \R$ such that $X_T= f_T X$ is a complete analytic vector field which preserves the area associated to the $2$-form $\theta_T= 1/f_T \theta$. Notice that the factor $f_T$ can be taken so that the area 2-form $\theta_T$ is complete (i.e. each end of the noncompact surface $S_T$ has infinite area). We shall refer to this analytic vector field $X_T$ (respectively to this $2$-form $\theta_T$) as \textit{the suspended vector field (respectively $2$-form) associated to $T$}.

\section{Proofs of Theorems~\ref{T:maingenerofinito} and~\ref{T:maingeneroinfinito}}\label{proofmainth}

Let us start with the proof of Theorem~\ref{T:maingeneroinfinito}, which is immediate from Theorem~\ref{T:ietinfinita}. Indeed, the latter theorem gives an infinite minimal \giet\ $T$ with flips, and associated to it we may take the suspended surface $S_T$, the suspended vector field $X_T$ and the suspended area $2$-form $\theta_T$. To conclude, we only need to notice that the surface $S_T$ must be of infinite genus due to the \textit{Structure Theorem} of~\cite{gutierrez1}.

In order to deal with the proof of Theorem~\ref{T:maingenerofinito}, we shall restrict now to the case $n \in \N$ in the procedure above. Recall that, since $n \in \N$, the suspended surface $S_T$ is a noncompact surfaced contained in the suspended compact surface $M_T$. The analytic map $f_T: S_T \to \R$ considered above to define the suspended vector field $X_T$ can in fact be taken such that after defining it as zero in the marked points $\mathcal{M}_T=M_T \setminus S_T$ we get a $C^{\infty}$ map on $M_T$. Then, the vector field $X_T$ can also be seen as a $C^{\infty}$ complete vector field on the compact surface $M_T$ whose restriction to $S_T$ is analytic. When $X$ is understood as a vector field on $M_T$ we will refer to it as the \textit{suspended compact vector field associated to $T$}. 

Given $p\in M_T$, denote by $\gamma_p: \R \to M_T$ the integral curve of $X_T$ starting at $p$. We already know that for every point $p\in S_T$, the orbit $\Gamma_p=\gamma_p(\R)$ is dense in $M_T$. Even more, owing to Remark~\ref{katok}, if $p \in [\mathcal{S}^1 \times \{1/2\}]$ there are three possibilities:
\begin{enumerate}
	\item if $p \notin [\{c_1, \ldots, c_{n'}\} \cup \{b_1, \ldots, b_{n'}\} \times \{1/2\}]$, then both 
	$\Gamma_p^+=\{\gamma_p(t) : t \in [0,\infty)\}$ and $\Gamma_p^-=\{\gamma_p(t) : t \in (-\infty,0]\}$ are dense in $M_T$;
	\item if $p \in [\{c_1, \ldots, c_{n'}\}\times \{1/2\}]$, $\Gamma_p^+=[(\{c_i\}\times [1/2,1)]$ with 
	$\lim_{t \to \infty}{\gamma_p(t)}=[(c_i,1)]$ and $\Gamma_p^-=\{\gamma_p(t) : t \in (-\infty,0]\}$ is dense;
	\item if $p \in [\{b_1, \ldots, b_{n'}\}\times \{1/2\}]$, $\Gamma_p^-=[(\{b_i\}\times (0,1/2])]$ with $\lim_{t \to -\infty}{\gamma_p(t)}=[(b_i,0)]$ and $\Gamma_p^+=\{\gamma_p(t) : t \in [0,\infty)\}$ is dense.
\end{enumerate}


\subsection{Computation of the genus of the surface $M_T$}

In order to deduce the genus of the surface $M_T$, we begin computing the index of the singular points of $X_T$. For doing so, it is enough to notice that all the singular points of $X_T$ have neighbourhoods which are topologically equivalent with an open disk on the plane decomposed in evenly many hyperbolic sectors. Let us formalize what we want to mean by this (c.f. \cite[pp. 17--18]{dumortierllibreartes}). 

Consider the euclidean planar vector field $Z(x,y)=(x,-y)$. Take also the set $S_h=\{(x,y) \in \R^2 : 0 \leq x, \, y \leq 1 \, \wedge \, xy < 1/2\}.$ Let $M$ be a surface and $p \in M$ be a singular point for a $C^1$ complete vector field $Y$ over $S$. Let $U$ be an open disk containing $p$. Suppose that $p$ is the only singular point of $Y$ in $U$ and that, for some $m \geq 1$, there are $2 m$ integral curves $\gamma_1, \ldots, \gamma_{2 m}: \R \to M$ whose images meet $S \setminus U$ and such that, for every $1\leq j \leq 2m$, either $\lim_{t \to -\infty} \gamma_j(t)=p$ or $\lim_{t \to \infty} \gamma_j(t)=p$. The $2 m$ orbits given by these integral curves divide $U$ in $2 m$ sets, $U_1, \ldots, U_{2 m}$, which are closed connected subsets of $U$ with pairwise disjoint interiors and each of them with the union of $\{p\}$ and two semi-orbits of these $2 m$ orbits as their frontiers (in $U$). We say that $U_1 \cup \cdots U_{2 m}$ is a decomposition of $U$ in $2 m$ \emph{hyperbolic se
 ctors} if for every $U_i$ there exists an homeomorphism $h : U_i \rightarrow S_h$ whose restriction to $\Inte{U_i}$ takes orbits of the restriction of $Y$ to $\Inte{U_i}$ onto orbits of the restriction of $Z$ to $\Inte(S_h)$ preserving their natural orientations as images of integral curves. In such a case we also say that $p$ is a \textit{$2 m$-saddle point} for the vector field $Y$. See Figure~\ref{GeneralizeSaddle}.

\begin{figure}[ht]
	\centering
	\begin{tabular}{ccc}
		\includegraphics[width=.3\textwidth]{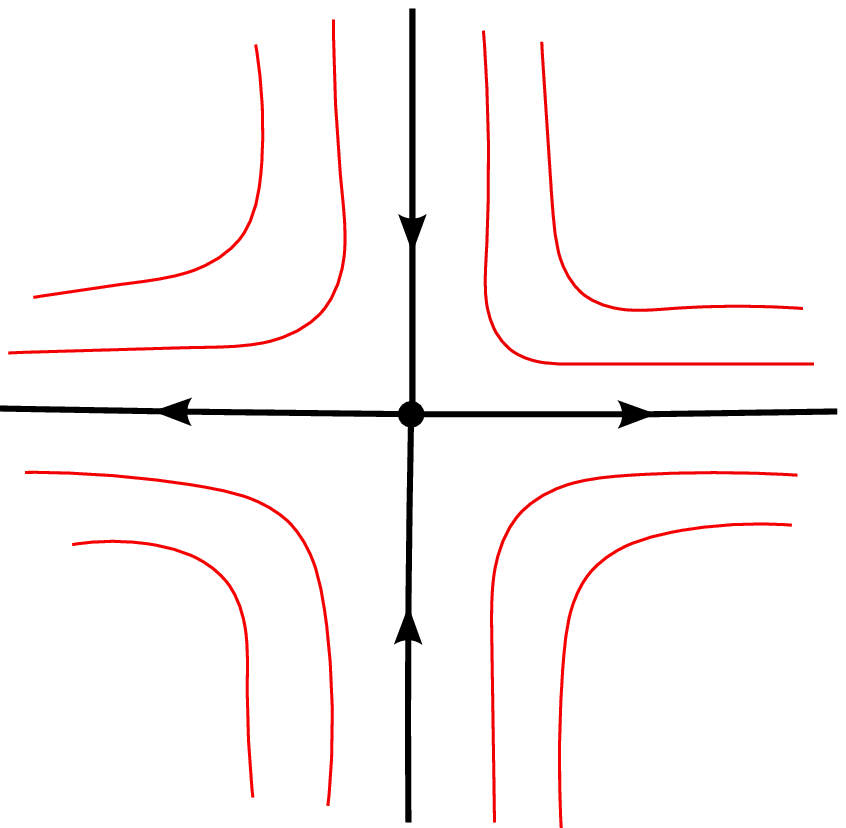}&\quad&
		\includegraphics[width=.3\textwidth]{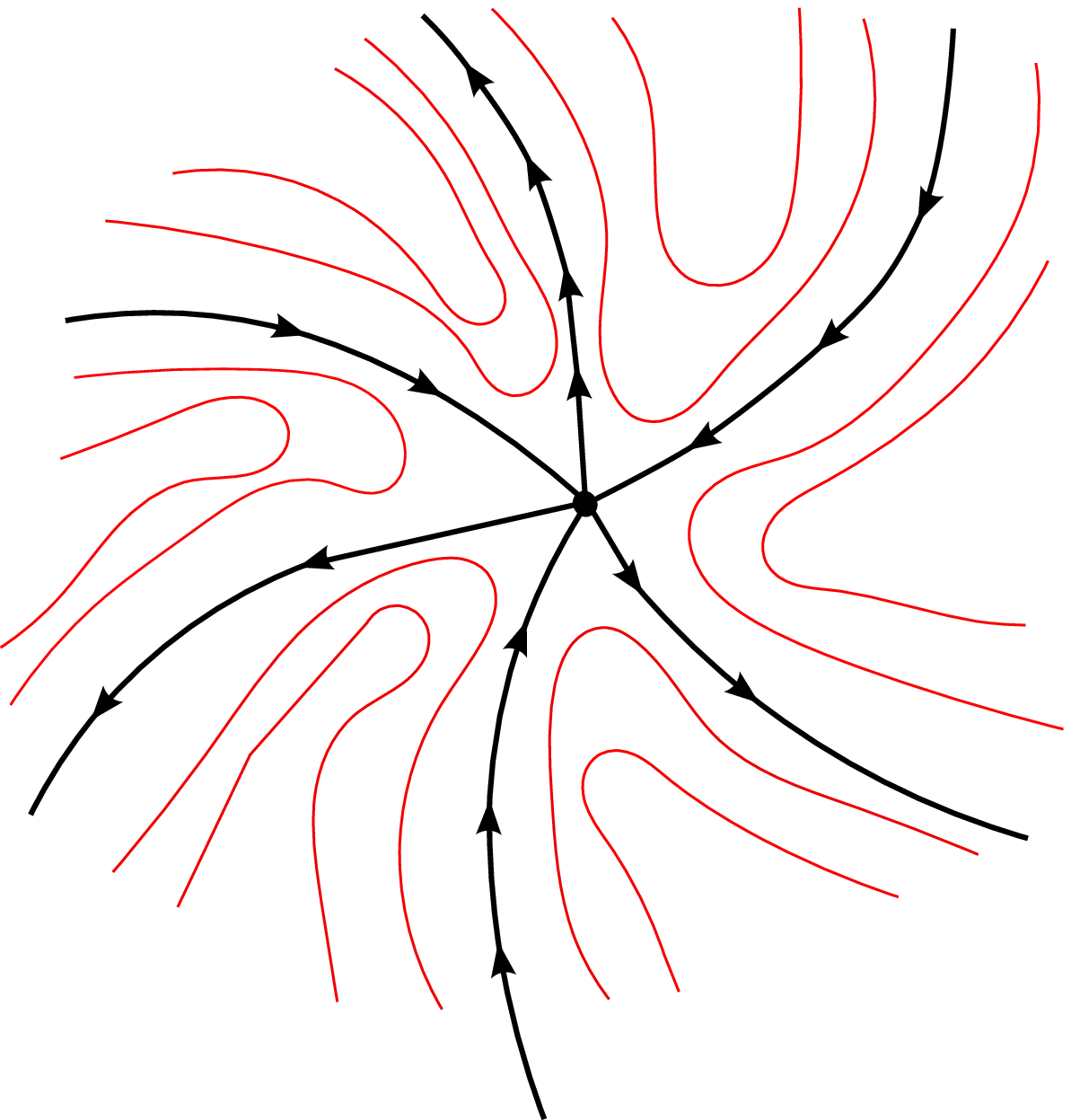}	 
	\end{tabular}
		\caption{A standard saddle point (left) and a $6$-saddle point (right)}
	\label{GeneralizeSaddle}
\end{figure}

Any of the marked points $[(c_i,1)]$, $1 \leq i \leq n'$, is a $2 i_k$-saddle point for $X_T$
for some $i_k \geq 2$. Indeed, since $T^c$ is proper, for any $1 \leq i \leq n'$ we have that $\lim_{x \to c_i^+}T^c(x) \neq \lim_{x \to c_i^-}T^c(x)$ and $\lim_{x \to b_i^+}(T^c)^{-1}(x) \neq \lim_{x \to b_i^-}(T^c)^{-1}(x)$ so the class of equivalence $[(c_i,1)]$ contains at least some other $[(c_j,1)]$ ($1 \leq j \leq n'$) with $c_j \neq c_i$ and as many points of the type $[(c_k,1)]$ as of the type $[(b_l,1)]$ ($1 \leq k,l \leq n'$). Let us say $[(c_i,1)]=\{(c_{i_1},1), \cdots, (c_{i_k},1)\} \cup \{(b_{j_1},0), \cdots, (b_{j_k},0)\}$ (with both $\{{i_1}, \cdots, {i_j}\}$ and $\{{j_1}, \cdots, {j_k}\}$ being sets with $k \geq 2$ different points in $\N_{n'}$). For every $1 \leq l \leq k$, the semi-orbit $\Gamma_{[(c_{j_l},1/2)]}^+$ has $\{[(c_i,1)]\}$ as $\omega$-limit set while the semi-orbit 
$\Gamma_{[(b_{j_l},1/2)]}^-$ has $\{[(c_{i},1)]\}$ as $\alpha$-limit set. If $B_i$ is a sufficiently small open ball centred in $[(c_{i},1)]$, then it is clear that the orbits of $X_T$ through any point of the semiorbit $\Gamma_{[(c_{j_l},1/2)]}^+$ or of the semiorbit $\Gamma_{[(b_{j_l},1/2)]}^-$, for $1 \leq l \leq k$, are the only regular ones meeting $B_i$ and having $[(c_{i},1)]$ in one of their limit sets. The rest of the regular orbits $\Gamma$ meeting $B_i$ are such that $\Gamma \cap B_i$ is an open arc with endpoints in the frontier of $B_i$. This produces exactly $2 i_k$ hyperbolic sectors in the decomposition of $B_i$ (\cite[Theorem~1.43, p. 35]{dumortierllibreartes} justify this geometrically clear claim). The index of the point $[(c_{i},1)]$ is then exactly $\frac{1}{2}(2 - 2 i_k)=1 - i_k$. 

By the Poincar\'e-Hopf Index Theorem, we can already compute the Euler 
characteristic of the surface $S_T$. Let us say that $M_T$ presents $m \geq 1$ marked points, then $ \chi(S_T)=\sum_{j=1}^m{(1 - i_k)}=m - n$. Consequently, if $S_T$ is orientable 
(respectively nonorientable) its genus equals 
\begin{equation}\label{E-genus}
g(S_T)=1 + \frac{n-m}{2}\; (\textrm{respectively } g(S_T)=2 + n - m).   
\end{equation}


\subsection{An intermediate result}

In this subsection we prove a special case of Theorem~\ref{T:maingenerofinito}, which plays a key role in the next subsection when proving the theorem in its full generality.
 
\begin{theorem}\label{UnPunto} For any $n \geq 4$ there exists a proper minimal $(n,n-2)$-\iet, $T$, which produces, after identifying $0$ and $1$, a $(n-1,n-2)$-\cet, $T^c$. The associated suspended compact surface, $M_{T}$, is a nonorientable compact surface of genus $n$ and the suspended compact vector field, $X_{T}$, is of class $C^{\infty}$ and has an only singular point $p_0$ (with a neighbourhood decomposed in $h=2 n - 2$ hyperbolic sectors). The restriction of $X_{T}$ to the suspended surface $S_T= M_T \setminus \{p_0\}$ is analytic and minimal. With more detail, given any point in $S_{T}$ if its orbit is not both backwardly and forwardly dense, then  either it has $\{p_0\}$ as $\alpha$-limit set and is forwardly dense or has $\{p_0\}$ as $\omega$-limit set and is backwardly dense (these last two cases arising only for exactly $h$ orbits).
\end{theorem}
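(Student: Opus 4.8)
The plan is to obtain $T$ by quoting the results of Section~\ref{Giets}, to suspend it with the machinery of Section~\ref{flowgiet}, and then to read off the topology of $M_{T}$ and the dynamics of $X_{T}$ from the analysis already carried out in Section~\ref{proofmainth}; the only genuinely new ingredient is a finite combinatorial check about the gluing.

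First, fix $n\geq 4$ and put $k=n-2$, so $1\leq k\leq n$. By Theorem~\ref{minlinsol}, and more precisely by the explicit version recorded in Remark~\ref{R-minlinsol}, there is a proper, minimal and uniquely ergodic $(n,n-2)$-\iet\ $T\equiv(\lambda,\pi)$ in $(0,1)$ with $\pi=(-3,-4,\dots,-[n-1],n,1,-2)$. Since $\pi(n-2)=n$ and $\pi(n-1)=1$, the reduction criterion of Remark~\ref{R-minlinsol} applies: identifying $0$ and $1$ creates a false discontinuity and $T^{c}$ is a proper minimal $(n-1,n-2)$-\cet; in particular $n'=n-1$ and, as $n-2\geq 2$, $T^{c}$ has flips. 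Feeding $T$ into the construction of Section~\ref{flowgiet}, Lemma~\ref{surfacestructure} gives that $S_{T}$ is an analytic surface and $M_{T}$ a compact surface of the same genus and orientability class, both nonorientable because $T^{c}$ has flips; and the construction of Section~\ref{S:suspendedflow}, together with the choice of $f_{T}$ vanishing on $\mathcal{M}_{T}=M_{T}\setminus S_{T}$ made in Section~\ref{proofmainth}, produces a $C^{\infty}$ complete vector field $X_{T}$ on $M_{T}$, analytic on $S_{T}$, whose Poincar\'e first return map on the transversal $[\S^{1}\times\{1/2\}]$ is $T^{c}$. Hence $X_{T}$ is minimal on $S_{T}$ because $T^{c}$ is minimal, and its only zeros on $M_{T}$ are the marked points.

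The crucial point is then to show that the equivalence relation $\mathcal{R}$ collapses the whole set of $2(n-1)$ corner points $\{(c_{i},1)\}_{i=1}^{n-1}\cup\{(b_{j},0)\}_{j=1}^{n-1}$ to a single point $p_{0}$, so that $\mathcal{M}_{T}=\{p_{0}\}$ and $X_{T}$ has exactly one singularity. This is a finite check: following the chain of identifications induced by the maps $h_{i}$ one passes from a $c$-corner to a $b$-corner and back, and the cyclic shape of $\pi$ forces this chain to run through all $2(n-1)$ corners before it closes. Granting this, the local analysis of Section~\ref{proofmainth} shows that $p_{0}$ is a $2i_{k}$-saddle whose parameter $i_{k}$ equals the number of $c$-corners it identifies, namely $i_{k}=n-1$; thus a neighbourhood of $p_{0}$ splits into $h=2n-2$ hyperbolic sectors and its index is $1-i_{k}=2-n$. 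By the Poincar\'e--Hopf Index Theorem, $\chi(M_{T})=2-n$, so, $M_{T}$ being nonorientable, $M_{T}\cong N_{n}$ has genus $n$ (this is also what formula~\eqref{E-genus} yields once one remembers that the suspension is built on the $(n-1)$-\cet\ $T^{c}$).

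It remains to describe the orbits. By Remark~\ref{katok} and the trichotomy established in Section~\ref{proofmainth} for points of the transversal $[\S^{1}\times\{1/2\}]$, an orbit of $X_{T}$ in $S_{T}$ fails to be both forwardly and backwardly dense only if it meets the transversal at some $[(c_{i},1/2)]$ or some $[(b_{j},1/2)]$: in the first case its forward semiorbit is $[\{c_{i}\}\times[1/2,1)]$, with $\omega$-limit $\{p_{0}\}$, and its backward semiorbit is dense; in the second case its backward semiorbit is $[\{b_{j}\}\times(0,1/2]]$, with $\alpha$-limit $\{p_{0}\}$, and its forward semiorbit is dense. Since $T^{c}$ is minimal it has no nontrivial saddle connections (Remark~\ref{katok}), whence $c_{i}\neq b_{j}$ for all $i,j$ and no orbit joins $p_{0}$ to itself; therefore these exceptional orbits are precisely the $h=2(n-1)$ separatrices of $p_{0}$, pairwise distinct, which is the asserted description. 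The main obstacle in this scheme is the combinatorial step of the previous paragraph (the verification that the gluing pattern coming from this particular $\pi$ merges every corner into one point, which is what pins the genus down to $n$); everything else is bookkeeping with results already in place.
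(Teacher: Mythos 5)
Your proposal follows the paper's route essentially step by step: the same $(n,n-2)$-\iet\ $T\equiv(\lambda,\pi)$ with $\pi=(-3,-4,\dots,-[n-1],n,1,-2)$ supplied by Theorem~\ref{minlinsol} and Remark~\ref{R-minlinsol}, the same reduction to the $(n-1,n-2)$-\cet\ $T^c$, the same suspension via Lemma~\ref{surfacestructure} and Section~\ref{S:suspendedflow}, the same index/Euler-characteristic computation, and the same orbit trichotomy from Section~\ref{proofmainth}. Your reading of formula~\eqref{E-genus} (with the number of exchanged intervals of the \emph{circle} exchange, $n'=n-1$, and $m=1$) is also the correct one, and your remark that minimality excludes $c_i=b_j$, so that the $2(n-1)$ separatrices are pairwise distinct orbits, is a legitimate (and slightly more careful) justification of the final sentence of the statement.

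The one genuine shortfall is that the step you yourself single out as the only new content --- that the relation $\mathcal{R}$ merges all $2(n-1)$ corner points $(c_i,1)$, $(b_j,0)$ into a single class, i.e.\ $m=1$ --- is asserted (``the cyclic shape of $\pi$ forces this chain to run through all corners'') and then used ``granting this'', but never verified. This cannot be waved away by the shape of $\pi$ alone: for other permutations the chain of identifications closes early and produces several marked points, which is precisely why \eqref{E-genus} carries the parameter $m$; the genus claim stands or falls with this count. The missing check is short and is what the paper actually does: reading off the endpoint identifications of the gluing maps $h_i$ for this particular $\pi$, one gets $(c_i,1)\sim(c_{i+2},1)$ for $1\leq i\leq n-4$, together with $(c_{n-3},1)\sim(c_{n-2},1)$, $(c_{n-1},1)\sim(c_1,1)$ and $(c_2,1)\sim(c_{n-1},1)$; these relations join the odd- and even-indexed $c$-corners into a single class, and since every $b$-corner is identified through some $h_i$ with a $c$-corner, all corners collapse to the single point $p_0$. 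With this verification inserted, your argument coincides with the paper's proof; without it, the central claim $\mathcal{M}_T=\{p_0\}$ (and hence the genus $n$ and the count $h=2n-2$) is unproved.
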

\begin{proof} 
Theorem~\ref{minlinsol} and Remark~\ref{R-minlinsol} provide, for $n\geq 4$, a proper, minimal uniquely ergodic $(n,n-2)$-\iet, $T=(\lambda,\pi)$, with $\pi=(-3,-4,-5,\dots,-[n-1],n,1,-2)$. Denote by $(a_i,a_{i+1})$, $1\leq i\leq n$, the intervals exchanged by $T$. Take the minimal $(n-1,n-2)$-\cet, $T^c$, obtained after identifying $0$ with $1$ and write  
$(c_i, c_{i+1})$, $1\leq i\leq n-1$, to denote the intervals exchanged by $T^c$. Then we have $c_1=a_1<c_2=a_2<\dots<c_{n-2}=a_{n-2}<a_{n-1}<c_{n-1}=a_n<c_n=a_{n+1}=1$.

Next we show that the number of marked points appearing in the compact surface $M_T$ in the construction of the suspension is exactly 
one (see Section~\ref{SS:suspension}). By the construction of $M_T$ it is clear that 
the permutation $\pi$ gives the identifications $(c_i,1)\sim (c_{i+2},1)$ for any $1\leq i\leq n-4$. Furthermore, $\pi$ also gives the relations $(c_{n-3},1)\sim (c_{n-2},1)$, $(c_{n-1},1)\sim (c_{1},1)$ and  $(c_{2},1)\sim (c_{n-1},1)$.

Finally, from Equation~(\ref{E-genus}), we deduce that $g(S_{T^c})=n$.
\end{proof}

\begin{remark}\label{casoOrientable} The same argument can be used to guarantee that, considering appropriate $\cet$ without flips, the statement of the result works analogously for compact orientable surfaces of genus 
$g \geq 1$. Indeed, for any $n \in \N \cup \{0\}$ we may take the (standard) irreducible permutation $\pi$ given by $\pi(i)=2 i$ for every $1 \leq i \leq n + 1$ and $\pi(i)=2 (i - n + 2) +1$ for every $n+2 \leq i \leq 2 n + 2$: $  \pi=(2, 4, \ldots, 2 n, 2n+2, 1, 3, \ldots, 2 n - 1, 2 n + 1)$.

Consider also a vector $\lambda=(\lambda_i)_{1 \leq i \leq 2n +2}$ with its components being rationally independent. Then $T\equiv (\lambda, \pi)$ satisfies the Keane condition and it is minimal (see Lemma~\ref{orientedIETS}). After identifying $0$ and $1$ we get a proper minimal $(2n +1)$-\cet, $T^c$. This oriented \cet\ produces also a unique boundary component when we suspends it and equation~\eqref{E-genus} says now that $g(S_{T^c})=1 + \frac{(2n + 1) - 1}{2}=n+1.$
\end{remark}

\subsection{Completing the proof of Theorem~\ref{T:maingenerofinito}}
	
Using the previous results, we can now present the proof of Theorem~\ref{T:maingenerofinito}. Let $S$ be a nonorientable (respectively orientable) noncompact surface of finite genus $g\geq 4$  (respectively $g\geq 1$). According to Corollary~\ref{finitegenus}, given any compact nonorientable surface $S'$ of genus $g \geq 4$ (respectively any compact orientable surface of genus $g \geq 1$), there exists a (metric compact) totally disconnected subset $K \subset S'$ such that $S' \setminus K$ is homeomorphic to $S$ (and therefore analytic diffeomorphic, see~\cite[Theorem~2.1]{victorgabi}).
 
Let $n=g$ (respectively $n=g-1$), take $T$ a $(n,n-2)$-\iet\ (respectively a $2n +2$-\iet) as in Theorem~\ref{UnPunto} (respectively as in Remark~\ref{casoOrientable}) and let $T^c$ be the associated $(n-1,n-2)$-\cet\ (respectively $2n +1$-\cet). Let $M_{T}$ and $X_{T}$ be, respectively, the suspended compact surface and the vector field associated to  $T$. Call $p_0$ be the only singular point of $X_{T}$ (the restriction of $X_{T}$ to $S_T=M_{T} \setminus \{p_0\}$ is analytic). 

On account of Theorem~\ref{T:Cantor}, we are done with the proof if we are able to find a Cantor set in $S_{T^c}$, $\mathcal{K}$, containing $p_0$ and with the extra property that any other orbit of $X_T$ meeting $\mathcal{K}$ is dense backward and forwardly and meets $\mathcal{K}$ in exactly one point.

Let $k=n$ (respectively $k=2n+2$) and $a_1=0<a_2<\cdots<a_{k+1}=1$ be the discontinuity points of $T$. Call, for every $1 \leq i \leq k$, $\lambda_i=a_{i+1}-a_i$. Scaling the interval $[0,1]$ by an appropriate irrational number if necessary, there is no loss of generality in assuming that all the $\lambda_i$ are irrational numbers.
 
Let us consider a maximal rationally independent set $F=\{\lambda_{j_1}, \ldots, \lambda_{j_N}\} \subset \{\lambda_1, \ldots, 
\lambda_{n}\}$ (i.e. any $\lambda_i \notin F$ makes $F\cup\{ \lambda_i\}$ be rationally dependent). The previous observation guarantees $F$ is non-empty. We may also assume that $\lambda_{j_1}=\lambda_1$.

Use Lemma~\ref{L:cantor}  to take    a rationally independent Cantor set, $K \subset [0,1]$, containing $F$. We show that, for any 
$k \in\Z\backslash\{0\}$, $T^k(K) \cap K \neq \emptyset$. Indeed, assume by contradiction the existence of  $x,y \in K$ and $k \geq 1$ such that $T^k(x)=y$. Using 
Equation~\eqref{formulaiet} and the maximality of $F$, we deduce that there are some $n_x,n_y, n_1, \ldots, n_N \in \Z$, not all vanishing, such that $n_x x + n_y y + n_{1} \lambda_{j_1} + \cdots + n_N \lambda_{j_N} \in \Z$ contradicting the rational independence of $K$. An analogous reasoning justifies that $\{\lambda_1\}= K \cap \bigcup_{i=1}^{n}{\mathcal{O}(a_i)}$. If we then identify again $0$ and $1$, $K$ can be seen as Cantor set in $\S^1$. Finally, call $\mathcal{K}=[K \times \{1\}] \subset S_{T^c}$ and observe that $p_0=[(\lambda_1,1)] \in \mathcal{K}$ and that any other orbit of $X_T$ meeting $\mathcal{K}$ does it exactly once and it is dense backward and forwardly. This completes the proof of Theorem~\ref{T:maingenerofinito}.


\section*{Acknowledgements}
This work is supported in part by the MINECO grants MTM2011-23221 and
MTM2014-52920-P and the ICMAT--Severo Ochoa grant
SEV-2015-0554. J.G.E.B. is supported by Fundaci\'on S\'eneca through the program ``Contratos Predoctorales de Formaci\'{o}n
del Personal Investigador'', grant 18910/FPI/13. D.P.-S. is supported by the ERC Starting Grant~335079. 

Part of this work was done during two visits of J.G.E.B. to the ICMAT (on the fall 2015 and the spring 2016); this author is
deeply grateful for the hospitality he received.

\appendix

\section{There are no minimal nonorientable surfaces of genus $3$}\label{ApenA}
In \cite[p. 14]{rusos}, the impossibility of finding minimal $C^{\infty}$ vector fields on nonorientable surfaces of genus $3$ is stated. We devote this appendix to show a proof of this fact as an elementary consequence of Lemma~\ref{ConnectedCircles} below, due to  C. Gutierrez (see~\cite{gutierrez2}).
 
Let $X$ be a $C^{\infty}$ complete vector field on a surface $S$ and, for every $p \in S$, let $\gamma_p:\R \to S$ be the integral curve of $X$ starting at $p$. We denote $\Gamma_p=\gamma_p(\R)$. The semiorbits $\Gamma_p^+=\{\gamma^p(t): t\in [0,\infty)\}$ and $\Gamma_p^-=\{\gamma^p(t): t\in (-\infty,0]\}$ are called, respectively, the \textit{positive} and the \textit{negative semiorbits} of $X$ \textit{starting at $p$}.  The  \emph{$\omega$-limit set of $p$} and the \emph{$\alpha$-limit set of $p$} are, respectively, $\omega_X(p)=\bigcap_{t \geq 0} \Cl(\{\gamma_p(s) : s \geq t\})$ and $\alpha_X(p)=\bigcap_{t \leq 0} \Cl(\{\gamma_p(s) : s \leq t\})$. A point $p\in S$ whose associated orbit meets either is $\alpha$-limit set or its $\omega$-limit set is said to be a recurrent point of $X$. Clearly, if $p$ is either a singular point of $X$ or $\gamma_p$ is periodic, $p$ is a recurrent point. Those points are also called the trivial recurrent points of $X$.

A subsurface with boundary $N\subset S$ is said to be a \emph{flow box} 
of $X$  if there exists an homeomorphism $\theta:[-1,1]\times [-1,1] \to N$ such that, 
for any $s\in[-1,1]$, $\theta([-1,1]\times \{s\})$ is a semiorbit of  $X$; in such a scenery, $\theta$ will be also called a flow box. 
We remind that any non singular point $p\in S$ is contained in a flow box. Even more, according to~\cite[Theorem~1.1, p. 45]{rusos}:

\begin{lemma}[Long Flow Box Theorem]\label{LongBox} Let $p$ be a regular point of $X$ and $\Gamma \subset \Gamma_p=\gamma(\R)$ be a semiorbit which is not closed. Then there exists a flow box $\theta:[-1,1]\times [-1,1] \to W$ such that $\gamma \subset \theta((-1,1)\times\{0\})$.
\end{lemma}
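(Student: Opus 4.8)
The plan is to derive the statement from the classical local flow box theorem by propagating a local flow box along the whole arc $\Gamma$, realising it as the flow‑out of a small transverse segment. Write $\phi_t$ for the (global, since $X$ is complete) flow of $X$, so that $\phi_t(q)=\gamma_q(t)$. Since $\Gamma$ is a piece of the orbit of $X$ through $p$ that is not a closed curve, $t\mapsto\phi_t(p)$ is injective on a compact interval $[a,b]\ni 0$ with $\Gamma=\phi_{[a,b]}(p)$, and in the case that this orbit is periodic we may furthermore take $b-a$ strictly smaller than its period. Since $p$ is a regular point it admits a small $C^{\infty}$ cross‑section: an embedded arc $\sigma\colon[-1,1]\to S$ with $\sigma(0)=p$ which is transverse to $X$ at each of its points.

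First I would choose a slightly larger interval $a'<a\le b<b'$ (still shorter than the period in the periodic case) and set $\Psi\colon[-1,1]\times[a',b']\to S$, $\Psi(s,t)=\phi_t(\sigma(s))$. Along the central segment $\{0\}\times[a',b']$ the partial derivatives $\partial_t\Psi=X\circ\Psi$ and $\partial_s\Psi|_{t=0}=\sigma'$ are linearly independent, so $\Psi$ is an immersion, hence a local diffeomorphism, on a neighbourhood of that segment. The heart of the proof is to show that $\Psi$ is \emph{injective} on $[-\eta,\eta]\times[a',b']$ for some $\eta>0$. If this failed, for every $n$ there would be distinct points $(s_n,t_n),(s_n',t_n')\in[-1/n,1/n]\times[a',b']$ with $\phi_{t_n}(\sigma(s_n))=\phi_{t_n'}(\sigma(s_n'))$; passing to a subsequence so that $t_n\to t_\ast$, $t_n'\to t_\ast'$, and letting $n\to\infty$, we get $\phi_{t_\ast}(p)=\phi_{t_\ast'}(p)$ and hence $t_\ast=t_\ast'$ (the orbit of $p$ is non‑periodic, or periodic with period exceeding $b'-a'\ge|t_\ast-t_\ast'|$). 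Then $\sigma(s_n)=\phi_{t_n'-t_n}(\sigma(s_n'))$ with $t_n'-t_n\to 0$, so for $n$ large $\sigma(s_n)$ and $\sigma(s_n')$ lie in a fixed local flow box around $p$, on one and the same orbit arc of that box, and both on $\sigma$; since a transverse arc meets each orbit arc of a flow box in at most one point, $\sigma(s_n)=\sigma(s_n')$, whence $s_n=s_n'$, and then $t_n'=t_n$ because regular points have no arbitrarily small periods. This contradicts $(s_n,t_n)\ne(s_n',t_n')$.

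Granting this, $\Psi$ restricted to $[-\eta,\eta]\times[a',b']$ is an injective local diffeomorphism, hence a homeomorphism onto a compact subsurface with boundary $W$, the sets $\Psi(\{s\}\times[a',b'])$ are orbit arcs, and $\Gamma=\Psi(\{0\}\times[a,b])$ lies in the interior line $\Psi(\{0\}\times(a',b'))$. An affine reparametrisation of $[-\eta,\eta]\times[a',b']$ onto $[-1,1]^2$ followed by the interchange of the two coordinates (so that orbit arcs become the segments $[-1,1]\times\{s\}$, matching the convention used in this paper) yields a flow box $\theta$ with $\Gamma\subset\theta((-1,1)\times\{0\})$, as wanted. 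I expect the genuine obstacle to be exactly the global injectivity of $\Psi$: near its central segment injectivity comes for free from the immersion property, but excluding ``long returns'' of nearby orbits to the transversal all along $\Gamma$ forces one to use both the hypothesis that $\Gamma$ is not closed (equivalently, that one may work with orbit arcs shorter than the period when the orbit is periodic) and a compactness argument providing a uniform width $\eta>0$. The hypothesis is indispensable: no flow box can contain an entire periodic orbit inside one of its orbit arcs, since such an arc is homeomorphic to an interval rather than to a circle.
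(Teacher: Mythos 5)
The paper itself contains no proof of Lemma~\ref{LongBox}: it is quoted verbatim from \cite[Theorem~1.1, p.~45]{rusos}, so there is no internal argument to compare yours against. What you give is the standard proof of that theorem (flow-out $\Psi(s,t)=\phi_t(\sigma(s))$ of a transversal through $p$, plus a compactness argument producing a uniform width $\eta$ on which $\Psi$ is injective), and it is essentially correct; you also correctly identify the injectivity of $\Psi$ as the only real content. Two remarks. First, you have tacitly replaced the literal hypothesis ``$\Gamma$ a semiorbit which is not closed'' by ``$\Gamma$ a compact orbit arc which is not an entire closed orbit''. That is the right reading --- an unbounded semiorbit cannot lie on the middle line of a compact flow box (think of $X=\partial_x$ on $\R^2$), and the lemma is applied in Appendix~\ref{ApenA} only to compact arcs $\Gamma_n$ --- but it should be said explicitly, since as literally worded the statement is not what one proves. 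Second, a few steps deserve one more line each: (i) the linear independence of $\partial_s\Psi$ and $\partial_t\Psi$ must be checked at every point $(0,t)$ of the central segment, not only at $t=0$; it follows from $\partial_s\Psi(0,t)=D\phi_t(\sigma'(0))$ and $\partial_t\Psi(0,t)=X(\phi_t(p))=D\phi_t(X(p))$ with $D\phi_t$ an isomorphism (indeed, with $\sigma$ transverse at all its points, $\Psi$ is an immersion on the whole rectangle); (ii) to invoke ``a transverse arc meets each orbit plaque of a flow box at most once'' you need the subarc of $\sigma$ with parameters between $s_n'$ and $s_n$ to lie inside the fixed local box around $p$, which holds for large $n$ because $s_n,s_n'\to 0$ and $\sigma(0)=p$; (iii) ``regular points have no arbitrarily small periods'' must be used in its uniform form --- all points in a fixed rectified neighbourhood of $p$ have return times bounded below --- since the points $\sigma(s_n)$ vary with $n$; this comes from the same local flow box. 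With these details filled in, your argument is complete, and the final affine reparametrisation correctly matches the paper's convention that the orbit arcs of a flow box are the horizontal segments $\theta([-1,1]\times\{s\})$.
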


A circle $C \subset S$ is said to be \textit{transverse} to $X$ if for any $p\in C$ there exists a flow box $\theta:[-1,1]\times[-1,1]\to N$ such that $\theta(0,0)=p$ and $\theta(\{0\}\times [-1,1])=N\cap C$. Equivalently (c. f. \cite[p. 312]{gutierrez2}), a circle $C \subset M$ may be said to be a transverse to $X$ if there exists some $\epsilon>0$ such that the map $(t,p) \mapsto \gamma_p(t)$ is a homeomorphism of $[-\epsilon, \epsilon] \times C$ onto the closure of an open neighbourhood of $C$.

Let $C$ be a transverse circle. By definition of transversality, $C$ is orientable. Let $z \in S$ be such that $\gamma_z$ meets $C$ at least twice. Let $t_p < t_q$ be such that $\gamma_z(t_p)=p$ and $\gamma_z(t_q)=q$ are two different points of $C$ and $C \cap \gamma_z(t_p,t_q)) = \emptyset$. Let $L_{p,q}$ and $L'_{p,q}$ be the two components of $C\setminus\{p,q\}$ (both $L_{p,q}$ and $L'_{p,q}$ are open arcs with $p$ and $q$ as endpoints) and consider the circles $J_{p,q}=\gamma_z([t_p,t_q]) \cup L_{p,q}$ and $J'_{p,q}=\gamma_z([t_p,t_q]) \cup L'_{p,q}$. 
These two circles are said to be the \emph{two $C$-arcs determined by $\gamma_z$ and the points $p$ and $q$} (we shall also say that they are a \emph{pair of conjugated $C$-arcs determined by $\Gamma_z$}). Since $C$ is orientable, either both $J_{p,q}$ and $J'_{p,q}$ are orientable or both of them are nonorientable.
%
The following two Lemmas corresponds, respectively, with~\cite[Lemma 2]{gutierrez2} and~\cite[Proposition~2]{gutierrez2}.

\begin{lemma}[Peixoto]
\label{TransverseCircle} Let $S$ be a compact surface and $X$ be a complete $C^{\infty}$ vector fields. If $\gamma_p$ is an integral curve starting at a nontrivial recurrent point of $X$, then there is a transverse circle $C$ meeting with $\gamma_p$.
\end{lemma}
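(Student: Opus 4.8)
The plan is the following. First I would reduce to the case $p\in\omega_X(p)$: if instead $p\in\alpha_X(p)$, I replace $X$ by $-X$, which interchanges $\alpha$- and $\omega$-limit sets and does not affect transversality of circles. Since $p$ is a \emph{nontrivial} recurrent point, $p$ is not a singular point of $X$ and $\gamma_p$ is not periodic, so $\gamma_p\colon\R\to S$ is injective; in particular every arc $\gamma_p([0,t])$ is compactly embedded and the semiorbit $\Gamma_p^+$ is not closed. Using that $p$ is a regular point, I fix a small transverse arc $\Sigma$ through $p$ with $p$ in its relative interior (the ``vertical'' side of a flow box at $p$). Because $p\in\omega_X(p)$, the forward orbit accumulates at $p$, so $\Gamma_p^+$ meets $\Sigma$ at times $t_1<t_2<\cdots\to\infty$ with $\gamma_p(t_k)\to p$ along $\Sigma$ (flowing a bounded time from a point $\gamma_p(s_n)\to p$ onto $\Sigma$); none of these crossings equals $p$, as that would force periodicity. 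I then choose one such crossing $q=\gamma_p(T)$ with $T>0$, $q\neq p$, and $q$ as close to $p$ along $\Sigma$ as I wish, and I set $A=\gamma_p([0,T])$ and let $\sigma\subset\Sigma$ be the sub-arc from $p$ to $q$.

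Next I would build the circle. Applying the Long Flow Box Theorem (Lemma~\ref{LongBox}) to the compact orbit arc $A\subset\Gamma_p^+$, I obtain a flow box $\theta\colon[-1,1]\times[-1,1]\to W$ with $A\subset\theta((-1,1)\times\{0\})$ and the flow directed along increasing first coordinate; I write the corresponding coordinates as $(x,y)$. Inside $W$ I take a transverse arc $\tau$ running alongside $A$ from a point $p'\in\Sigma$ near $p$ to a point $q'\in\Sigma$ near $q$: concretely $\tau$ is the $\theta$-image of a graph $y=h(x)$ with $h$ strictly monotone and $|h|$ small, which is automatically transverse to $X$ inside $W$, and whose endpoints are arranged to lie on $\Sigma$ (in the overlap of $W$ with the flow box at $p$). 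Then I close up, setting $C=\tau\cup\sigma'$, where $\sigma'$ is the sub-arc of $\Sigma$ joining $p'$ to $q'$. By construction $C$ is an embedded circle whose two constituent arcs $\tau$ and $\sigma'$ are each transverse to $X$, and $C$ meets $\gamma_p$ (either $\tau$ crosses $A$, or $\sigma'$ contains $p$ or $q$, according to the configuration).

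The main obstacle is precisely this last point: $C$ can fail to be transverse to $X$ only at the two junction points $p'$ and $q'$, where two transverse arcs are glued. Working in a flow box around such a junction, the glued curve is transverse there exactly when $\tau$ and $\sigma'$ approach the junction from opposite sides of the local flow line — equivalently, when their union is locally a graph over the coordinate transverse to $X$. The two requirements (one at $p'$, one at $q'$) are coupled through the way the long flow box $W$ is ``framed'' along $A$ relative to $\Sigma$, a datum that on a nonorientable surface need not be untwisted. The heart of the argument is to show the two requirements can be met simultaneously: one selects the side of $p$ on which to take the return point $q$ and, if necessary, passes to a later crossing of $\Gamma_p^+$ with $\Sigma$ — here non-periodicity of $\gamma_p$ guarantees that genuine crossings keep accumulating at $p$, so these choices are available — until the two junctions become compatible; a routine $C^\infty$ corner-smoothing then turns $C$ into a smooth transverse circle meeting $\gamma_p$. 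Everything else (the local transverse arc, the Long Flow Box, the smoothing of transverse corners) is standard.
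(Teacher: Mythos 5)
Your overall strategy (transverse arc $\Sigma$ at $p$, returns accumulating at $p$, a long flow box around the orbit segment, closing up along $\Sigma$ and smoothing corners) is the classical one; note that the paper itself does not prove this lemma but quotes it from Gutierrez \cite[Lemma~2]{gutierrez2}, so the comparison is with that literature. The problem is that your sketch stops exactly where the real work lies, and the remedies you invoke for the only delicate point do not resolve it. At a junction lying on $\Sigma$, the concatenation of two transverse arcs can be smoothed to a transverse curve if and only if the $\Sigma$-coordinate is strictly monotone through the corner (otherwise any $C^1$ smoothing has a tangency with the flow). Working this condition out at both junctions of your curve $C=\tau\cup\sigma'$, one finds that whether the two conditions can be met simultaneously is governed by a single sign: whether the holonomy along the orbit arc $A$ from $p$ to the chosen return $q$ preserves or reverses the local orientation of $\Sigma$ (equivalently, whether a thin neighbourhood of $A\cup\sigma$ is an annulus or a M\"obius band). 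In the preserving case your construction works; in the reversing case it fails \emph{for every} placement of the junction points, so ``selecting the side of $p$ on which to take $q$'' buys nothing (and in any case the returns may accumulate at $p$ from one side only, so that choice is not free).

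The second remedy, ``pass to a later crossing of $\Gamma_p^+$ with $\Sigma$'', is also unjustified: the holonomy sign of $p\to q^{(n)}$ is the product of the signs of the intermediate inter-return holonomies, and it is perfectly possible that the first step is reversing while all subsequent steps are preserving (the orbit crosses the orientation-reversing part of the surface only along its initial segment), in which case every return from $p$ is side-switching and no choice of $n$ helps. The correct repair is to re-base the construction at a pair of later crossings $q_i,q_j\in\Gamma_p$ whose connecting orbit segment has orientation-preserving holonomy — one must prove such a pair exists (it does: either infinitely many consecutive holonomies are reversing, and then some partial product from $p$ is $+1$, or all but finitely many are preserving, and then a consecutive pair works) — and, when the chosen segment is not a first return, one must additionally control the intermediate intersections of the orbit segment with $\Sigma$, which otherwise lie on the connecting sub-arc $\sigma'$ and destroy the embeddedness of $C$; your proposal addresses neither point. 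As written, the argument establishes the lemma only in the orientation-compatible case, which is precisely the case that is insufficient for this paper's nonorientable setting, so there is a genuine gap.
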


\begin{lemma}[Gutierrez]\label{ConnectedCircles} Let $X$ be a $C^{\infty}$ complete vector field on $N_3$ and let $z\in N_3$ be a regular point such that $\gamma_z$ is not periodic and $\Gamma_z \subset \omega_{X}(z)$. If $\Gamma_w$ is the orbit starting at a point $w$ and different of $\Gamma_z$  with the property that $\Gamma_z \subset \omega_{X}(w)$ and $C$ is a transverse circle to $X$ meeting $\Gamma_z$, then every $C$-arc determined by $\Gamma_w$, with possible exception  of one conjugated pair of them, is orientable.  
\end{lemma}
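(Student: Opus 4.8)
The plan is to argue by contradiction. Assume $\Gamma_w$ determines two distinct conjugated pairs of nonorientable $C$-arcs, coming from two laps $D_1\neq D_2$ of $\Gamma_w$ (a \emph{lap} being a sub-arc of $\Gamma_w$ comprised between two consecutive intersections with $C$), and let $J_i,J_i'$ be the two $C$-arcs determined by $D_i$, so $J_i=D_i\cup L_i$ and $J_i'=D_i\cup L_i'$, where $L_i,L_i'$ are the two arcs into which the endpoints of $D_i$ split $C$. The aim is to exhibit a pair of disjoint, homologous, one-sided simple closed curves in $N_3$, which is impossible.

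First I would normalize the ambient picture. Because $C$ is transverse to $X$, the flow crosses $C$ always in the same direction, and because the nontrivially recurrent orbit $\Gamma_z$ meets $C$ it crosses it infinitely often; hence $C$ does not separate $N_3$. Cutting $N_3$ along the two-sided non-separating circle $C$ therefore yields a connected compact surface $\widehat N$ with exactly two boundary circles and $\chi(\widehat N)=\chi(N_3)=-1$; since a compact orientable surface with two boundary circles has even Euler characteristic, $\widehat N$ is nonorientable, and $2-g(\widehat N)-2=-1$ forces $\widehat N$ to be a projective plane with two open disks removed. In particular $\widehat N$ contains one-sided simple closed curves but no two \emph{disjoint} ones (any two of those would carry the nonzero class of $H_1(\widehat N;\mathbb{Z}/2)\cong\mathbb{Z}/2$ and hence meet); the same cutting argument shows that any two disjoint one-sided simple closed curves in $N_3$ itself are non-homologous. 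Also $\gamma_w$ is not periodic (otherwise $\omega_X(w)=\Gamma_w$ could not contain the non-periodic orbit $\Gamma_z\neq\Gamma_w$), so $\Gamma_w$ is an injective curve and distinct non-consecutive laps are disjoint; using the Long Flow Box Theorem (Lemma~\ref{LongBox}) I would put all the finitely many laps appearing in the argument in general position with respect to $C$, reducing the degenerate configurations (consecutive laps, or laps sharing an endpoint on $C$) to the generic one by an elementary perturbation.

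Next I would set up a homological dichotomy in $H_1(N_3;\mathbb{Z}/2)$, whose mod-$2$ intersection form $q(x)=x\cdot x$ is nondegenerate and equivalent to the standard form on $(\mathbb{Z}/2)^3$. For any lap $D$ one has $[J_D]+[J_D']=[C]$ (because $L_D\cup L_D'=C$), and if the pair is nonorientable then $q([J_D])=q([J_D'])=1$, while pushing the $C$-part of $J_D$ off $C$ to one side gives $[J_D]\cdot[C]=1$. A short computation with the intersection form then shows that $[J_1],[J_2],[C]$ are linearly dependent, and more precisely that either $[J_1]=[J_2]$ (which forces $J_1$ and $J_2$ to intersect essentially) or $[J_1]+[J_2]=[C]$ (the case when $J_1$ and $J_2$ can be made disjoint; here necessarily $[J_1]\neq[J_2]$ since $[C]\neq0$, and moreover $[J_1']=[J_2]$, $[J_2']=[J_1]$). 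Running this against the non-homology statement: taking $D_1,D_2$ non-consecutive, hence disjoint, excludes the first alternative, and if in addition the endpoint pairs $\{p_1,q_1\}$, $\{p_2,q_2\}$ are unlinked on $C$ one can choose the $C$-arcs so that $J_1$ and $J_2'$ (or $J_1'$ and $J_2$) are disjoint; being homologous one-sided curves in $N_3$ this is a contradiction.

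The remaining case is when the endpoint pairs of $D_1$ and $D_2$ are linked on $C$, and this is where the recurrence is genuinely needed and where I expect the main difficulty to lie. Using $\Gamma_z\subset\omega_X(z)$ and $\Gamma_z\subset\omega_X(w)$ — so that $\overline{\Gamma_z}$ is a quasi-minimal set contained in $\omega_X(w)$ and the laps of $\Gamma_w$ shadow those of $\Gamma_z$ arbitrarily closely for arbitrarily long times — one should produce a third nonorientable lap $D_3$ of $\Gamma_w$, non-consecutive with both $D_1$ and $D_2$ and hence disjoint from both; then $[J_1]+[J_3]=[C]=[J_2]+[J_3]$ forces $[J_1]=[J_2]$ with $J_1,J_2$ disjoint, once more contradicting the non-homology statement. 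Making this step precise — extracting the third disjoint nonorientable lap from the recurrence via the Long Flow Box Theorem (Lemma~\ref{LongBox}) and a careful bookkeeping of how the successive crossings of $\Gamma_w$ with $C$ interleave, and ruling out that the excess non-orientability carried by $D_1$ and $D_2$ is confined to a disk complementary to a neighbourhood of $C\cup D_1\cup D_2$ — is the technical heart of the argument; the classification of surfaces (Theorem~\ref{richardsth}) enters only to recognize the small subsurfaces that occur along the way.
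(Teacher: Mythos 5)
Your attempt should first be measured against the fact that the paper does not prove Lemma~\ref{ConnectedCircles} at all: it is quoted verbatim as \cite[Proposition~2]{gutierrez2}, so the only question is whether your argument stands on its own, and it does not. The homological set-up is fine as far as it goes: $[J]+[J']=[C]$ in $H_1(N_3;\mathbb{Z}/2)$, one-sidedness of a nonorientable pair gives $q([J])=q([J'])=1$, $[J]\cdot[C]=1$, and since the solution set of these constraints is the two-element coset $\{x_0,x_0+[C]\}$ one indeed gets the dichotomy $[J_1]=[J_2]$ or $[J_1]+[J_2]=[C]$. The gap is the next step, ``taking $D_1,D_2$ non-consecutive, hence disjoint, excludes the first alternative.'' Disjointness of the laps never makes $J_1$ and $J_2$ disjoint: these circles always share subarcs of $C$, so the fact that homologous one-sided curves must intersect essentially is perfectly compatible with $D_1\cap D_2=\emptyset$, and nothing is excluded. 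Worse, in the unlinked configuration the computation goes the other way: the only pair of $C$-arcs that can be chosen disjoint is $J_1^{\mathrm{out}}=D_1\cup L_1^{\mathrm{out}}$ (the arc avoiding $p_2,q_2$) together with $J_2^{\mathrm{in}}=D_2\cup L_2^{\mathrm{in}}$, and since disjoint one-sided circles cannot be homologous, the dichotomy then forces $[J_1^{\mathrm{out}}]\neq[J_2^{\mathrm{in}}]$, i.e.\ precisely the alternative $[J_1]=[J_2]$ you wanted to rule out; no contradiction follows. In other words, the mod-$2$ homology of $N_3$ is consistent with two disjoint nonorientable laps in unlinked position, so the recurrence hypotheses ($\Gamma_z\subset\omega_X(z)$, $\Gamma_z\subset\omega_X(w)$, $C$ meeting $\Gamma_z$) must be used in the ``easy'' case as well, not only in the linked one.

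The linked case, which you yourself flag as the technical heart, is not proved either, and its sketch leans on two claims that do not hold: $[J_1]+[J_3]=[C]$ is again only one branch of the dichotomy (not forced by disjointness of the laps), and the conclusion ``$[J_1]=[J_2]$ with $J_1,J_2$ disjoint'' is impossible to arrange when the endpoint pairs are linked, since then every arc of $C$ joining $p_1$ to $q_1$ contains $p_2$ or $q_2$, so every $C$-arc of $D_1$ meets every $C$-arc of $D_2$. Two smaller points: $H_1(\widehat N;\mathbb{Z}/2)\cong(\mathbb{Z}/2)^2$, not $\mathbb{Z}/2$, for the projective plane with two holes (the assertion that $\widehat N$ contains no two disjoint one-sided circles is still true --- cutting along one of them leaves a pair of pants --- but you never use it); and the non-homology of disjoint one-sided curves in $N_3$ is immediate from $x\cdot x=1$ versus intersection number $0$, with no cutting needed. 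As it stands the proposal records correct homological constraints but does not prove the lemma; a complete argument has to exploit the dynamics in the way Gutierrez's original proof does.
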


Finally, let $S$ be a nonorientable surface of genus $3$. According to the classification of noncompact surfaces (see Section~\ref{basic}), it is not restrictive to assume that $S\subset N_3$  and  $T=N_3 \setminus S$ is totally disconnected.

Let us proceed by contradiction assuming that there is a $C^{\infty}$ vector field on $S$, $X$,  with all its orbits being dense. It is not restrictive to assume that $X$ is defined on the whole $N_3$ having as critical points all the points in $T$. Thus, every orbit of $X$ starting at a point in $S$ is dense in $N_3$.

Without restriction of generality, we can take two points $p,q \in S$ whose orbits are different and both positively dense. Let $C$ be a transverse circle to $X$ (Lemma~\ref{TransverseCircle}). Since $\Gamma_p^+$ and $\Gamma_q^+$ are dense, they meet infinitely many times $C$. In particular there exists an increasing sequence $0 \leq t_1 < t_2 < t_3 < \dots$ such that $C \cap \Gamma_q^+ = \{ q_n=\gamma_q(t_n) : n \in \N \}$. For every $n \in \N$, call $\Gamma_n=\{ \gamma_q(t) : t_n \leq t \leq t_{n+1}\}$, a closed arc of endpoints $q_n$ and $q_{n+1}$. Let $J_{n}$ and $J'_{n}$ be the two $C$-arcs determined by $\Gamma_q$ and the points $q_n$ and $q_{n+1}$. In view of Lemma~\ref{ConnectedCircles}, either all the circles $J_n$ are orientable or all of them are orientable with possible a unique exception. In the latter case, say $J_{m}$ is the exception. Let $M$ be either $\mathbb{N}$ when all the $J_n$ are orientable or $\mathbb{N}\setminus\{m\}$ otherwise. 

Let $\epsilon>0$ be such that the restriction of the map $(t,x) \mapsto \gamma_x(t)$ to $[-\epsilon,\epsilon]\times C$ gives an embedding with $C=\Phi(\{0\} \times C)$ in the interior of $V=\{\gamma_x(t) : (t,x) \in [-\epsilon,\epsilon]\times C \}$. For every $n \in M$, consider also a flow box $\theta_n:[-1,1]\times[-1,1] \to W_n$ with $\Gamma_n \subset \theta_n((-1,1)\times\{0\})$ (see Lemma~\ref{LongBox}) and call $U_n=W_n \cup V$; this $U_n$ is then an orientable 
neighbourhood of $J_n$. 
As each $\theta_n$ is a flow box is clear that if, for some $1 \leq i,j \leq n$, $\Inte W_i \cap \Inte W_{j} \neq \emptyset$, then $W_i \cup W_j \cup V$ is orientable as well. 
As a consequence, the union of all the $U_n$ for $n \in M$ is orientable. 

Finally, since $\gamma_q$ is positively dense, the union $\bigcup_{n \in M}{U_n}$ is orientable and dense in $S$ and the orientation can be extended to the whole $S$. Indeed, for every $w\in S \setminus \bigcup_{n \in M}{U_n}$ let us take $s_w<0<t_w$ such that $\Gamma_w=\gamma_w([s_w,t_w]) \cap C = \{\gamma_w(s_w), \gamma_w(t_w)\}$ and a flow box $\theta_w:[-1,1]\times[-1,1] \to W_w$ with $\Gamma_w \subset \theta_w((-1,1)\times\{0\})$. Associated to any of these $w\in S \setminus \bigcup_{n \in M}{U_n}$, the positive density of $\gamma_q$ guarantees the existence of integers $n$ such that $\Gamma_n \subset W_w$; since $W_n \cup V$ is orientable then so must be $W_w \cup V$. Consequently we find around any point in $S \setminus \bigcup_{n \in M}{U_n}$ a neighbourhood (in $S$) with an orientation compatible with the one of $\bigcup_{n \in M}{U_n}$, a contradiction with the  nonorientability of $S$.

\section{Proof of Lemma~\ref{L:cantor}}\label{ApenB}
Let $I$ be an open interval and $F\subset I$ be finite and rationally independent. We shall construct, by induction, a nested sequence of compact sets whose intersection gives a Cantor set $K$ such that $F \subset K \subset I$.

Given any $p_1, \ldots, p_j \in \mathbb{Z}$ we name $f_{p_1, \ldots, p_j}$ the polynomial $f_{p_1, \ldots, p_j}(y_1, \ldots, y_j)= \sum_{i=1}^j{p_i y_i}$.

If $G \subset \mathbb{R}$ is a finite subset (say $G$ has exactly $m$ elements), the set $H=\{\sum_{i=1}^m{p_i y_i}: p_i \in \mathbb{Z}, \, y_i \in G\}$ is infinite. This trivial observation implies that, when $G$ is rationally independent, all but countably many elements of every connected component of $\R\setminus H$ can be added to $G$ and generate a new finite rationally independent set.

Therefore, we can assume we have a finite sequence $a^0_{1} < a^0_{2} < a^0_{3} < \cdots a^0_{2m-1} < a^0_{2m}$ (in $I$), for some positive integer $m$, such that $F'=\{a^0_i : 1 \leq i \leq 2 m\}$ is rationally independent and $F=\{a^0_{2i} : 1 \leq i \leq m\}$. Call $K_0= \cup_{i=1}^m{B^0_i}$ with $B^0_i=[a^0_{2 i -1},a^0_{2 i}]$ ($1 \leq i \leq m$).

Since $F'$ is rationally independent, for every choice of integers $p_1, \ldots, p_{2m}$ in  $\mathbb{Z}$ which do not vanish simultaneously, $f_{p_1, \ldots, p_{2 m}}(a^0_1, \ldots, a^0_{2 m}) \notin \mathbb{Z}$. 
By continuity we can then guarantee the existence of $4 m$ real numbers, $a^1_{1} < \cdots < a^1_{4 m}$, with $a^1_{4 i -3}=a^0_{2 i -1}$ and $a^1_{4 i}=a^0_{2 i}$ for every $1 \leq i \leq m$ such that $f_{p_1, \ldots, p_{2 m}}(y_1, \ldots, y_{2 m}) \notin \mathbb{Z}$ whenever  $p_1, \ldots, p_{2m} \in \{-2m, \ldots, -1, 0, 1, \ldots, 2m \}$, not all zero, and $(y_1, \ldots, y_{2 m}) \in B^1_1 \times \cdots \times B^1_{2 m}$ where $B^1_i=[a^1_{2 i -1},a^1_{2 i}]$ ($1 \leq i \leq 2 m$). We also assume, changing the interval for smaller ones if needed, that $0< a^1_{2 i} - a^1_{2 i -1} < 1/(2 m)$. Call $K_1= \cup_{i=1}^{2m}{B^1_i}$

Proceeding recursively, we build a infinite nested sequence $K_1 \supseteq \cdots \supseteq K_n  \supseteq K_{n+1} \supseteq \cdots$ such that, for every positive integer $n$

\begin{itemize}
	\item  $K_n$ is the union of $2^n m$ compact connected intervals $B^n_{i}=[a^n_{2 i -1},a^n_{2 i}]$ ($1 \leq i \leq 2^{n} m$) with 
	\begin{equation}\label{eqppB} a^n_{2 i} - a^n_{2 i -1} < 1/(2^n m)\end{equation} and where $a^n_1 < \cdots <a^n_{2^{n+1} m}$ and $a^{n+1}_{4 i}=a^n_{2 i}$ and $a^{n+1}_{4 i -3}=a^n_{2 i -1}$ for every $1 \leq i \leq 2^n m$;
	\item $F \subset K_n$;
	\item  for every $p_1, \ldots, p_{2^n m} \in \{  -2^n m, \ldots, -1, 0, 1, \ldots, 2^n m \}$, not all simultaneously zero, and every  $(y_1, \ldots, y_{2^n m}) \in B^n_1 \times \cdots \times B^n_{2^n m}$
	\begin{equation}\label{eqappB} f_{p_1, \ldots, p_{2^n m}}(y_1, \ldots, y_{2^n m}) \notin \mathbb{Z}.\end{equation} 
\end{itemize} 

We now take $K=\cap_{n = 1}^{\infty}{K_n}$ and claim that $K$ is the desired Cantor set. Indeed, $K$ is compact, perfect and totally disconnected set of real numbers containing $F$. Firstly, $K$ is a compact set containing $F$ because it is an intersection of sets with that property; even more, $K$ contains the set $\{a^n_{i}: n \in \mathbb{N} \wedge 1 \leq i \leq 2^{n+1} m\}$. Secondly, $K$ is rationally independent. Indeed, for every finite sequences $y_1, \ldots, y_l \in K$ and $p_1, \ldots p_l \in \mathbb{Z}$ we can take a large enough integer $n$ such that $\left|p_j\right| \leq 2^n m$ for all $0 \leq j \leq l$ and apply \eqref{eqappB} to guarantee that $\sum_{j=1}^m{p_j y_j} \notin \mathbb{Z}$ unless $p_1=\cdots=p_l=0$. Thirdly, $K$ is totally disconnected because, using~\eqref{eqppB}, for any two different points $x,y \in K$ there exist some big enough $n$ and two different $i,i'$ such that $x \in B^n_i$ and $y \in B^n_{i'}$. And, finally, $K$ is perfect. Indeed, let $x \in K$
  and $\varepsilon >0$. Let $n$ be a large enough integer so $2^{n} m > 1/\varepsilon$, then $x \in B^n_{i}$ for some  $1 \leq i \leq 2^{n} m$ and therefore $a^n_{2i-1}$ and $a^n_{2i}$ are points in $K \cap (x - \varepsilon, x + \varepsilon)$.





\begin{thebibliography}{99}





\bibitem{angostosoler}
     \newblock  C. Angosto Hern\'andez and G. Soler L\'opez,
     \newblock Minimality and the Rauzy-Veech algorithm for interval exchange transformations with flips,
     \newblock  \emph{Dyn. Syst.}, \textbf{28} (2013), 539--550.		

\bibitem{rusos} 
     \newblock S.K. Aranson and G.R. Belitsky and E.V. Zhuzhoma,
     \newblock \emph{Introduction to the qualitative theory of dynamical systems on surfaces},
     \newblock American Mathematical Society, Providence, 1996.

\bibitem{arnoux}
     \newblock  P. Arnoux,
     \newblock \'Echanges d'intervalles et flots sur les surfaces,
     \newblock \emph{Monograph. Enseign. Math.}, \textbf{29} (1981), 5--38.

\bibitem{beniere}
    \newblock J.C. Beni\`{e}re,
    \newblock  \emph{Feuilletage minimaux sur les surfaces non compactes},
    \newblock  Ph.D thesis, Universit\'{e} de Lyon, 1998.

\bibitem{chacon}
     \newblock  R.V. Chacon,
     \newblock Weakly mixing transformations which are not strongly mixing,
     \newblock \emph{Proc. Amer. Math. Soc.}, \textbf{22} (1969), 559--562.		
		
\bibitem{dumortierllibreartes} 
     \newblock F. Dumortier and J. Llibre and J.C. Art{\'e}s,
     \newblock \emph{Qualitative theory of planar differential systems},
     \newblock Springer-Verlag, Berlin, 2006.
		
\bibitem{gutierrez1}
     \newblock  C.  Guti\'{e}rrez,
     \newblock Smoothing continuous flows on two manifolds and recurrences,
     \newblock \emph{Ergod. Th. \& Dynam. Sys.}, \textbf{6} (1986), 17--44.		

\bibitem{gutierrez2}
     \newblock  C.  Guti\'{e}rrez,
     \newblock Structural stability for flows on the torus with a cross-cap,
     \newblock \emph{Trans. Amer. Math. Soc.}, \textbf{241} (1978), 311--320.		

\bibitem{gutierrez3}
     \newblock  C.  Guti\'{e}rrez,
     \newblock Smooth nonorientable nontrivial recurrence on two-manifolds,
     \newblock \emph{J. Differential Equations}, \textbf{29} (1978), 388--395.		

\bibitem{gutierrez4}
     \newblock  C.  Guti\'{e}rrez and G. Hector and A. L{\'o}pez,
     \newblock Interval exchange transformations and foliations on infinite genus 2-manifolds,
     \newblock \emph{Ergod. Th. \& Dynam. Sys.}, \textbf{24} (2004), 1097--1180.		

\bibitem{gutierrez4b}
     \newblock  C.  Guti\'{e}rrez and S. Lloyd and V. Medvedev and B. Pires and E. Zhuzhoma,
     \newblock Transitive circle exchange transformations with flips,
     \newblock \emph{Discrete Contin. Dynam. Systems}, \textbf{24} (2010), 251--263.		




\bibitem{victorgabi}
     \newblock  V.  Jim{\'e}nez L\'opez and G. Soler L\'{o}pez,
     \newblock Transitive flows on manifolds,
     \newblock \emph{Rev. Mat. Iberoamericana}, \textbf{20} (2004), 107--130.

\bibitem{katokhasselblatt}
     \newblock  A. Katok and B. Hasselblatt,
     \newblock \emph{Introduction to the modern theory of dynamical systems},
     \newblock Cambridge University Press, Cambridge, 1995
		
\bibitem{keane}
     \newblock  M. Keane,
     \newblock Interval exchange transformations,
     \newblock \emph{Math. Z.}, \textbf{141} (1975), 25--31.
		
\bibitem{ku1}
     \newblock  K. Kuratowski,
     \newblock \emph{Topology. Vol. I},
     \newblock Academic Press, New York, 1966
		


\bibitem{levitt}
     \newblock  G. Levitt,
     \newblock Pantalons et feuilletages des surfaces,
     \newblock \emph{Topology}, \textbf{21} (1982), 9--33.
		
\bibitem{linsol}
		 \newblock A. Linero, G. Soler L\'opez,
		 \newblock Minimal interval exchange transformations with flips,
		 \newblock \emph{to appear in  Ergodic Theory Dynam. Systems}, available from: \url{https://www.researchgate.net/profile/Gabriel_Soler_Lopez/}. 

\bibitem{richards}
     \newblock  I. Richards, 
     \newblock On the classification of noncompact surfaces,
     \newblock \emph{Trans. Amer. Math. Soc.}, \textbf{106} (1963), 259--269.
		
\bibitem{thomas}
     \newblock  R.A. Smith and S. Thomas, 
     \newblock Some examples of transitive smooth flows on differentiable manifolds,
     \newblock \emph{J. London Math. Soc.}, \textbf{37} (1988), 552--568.

\bibitem{capitulosoler} 
    \newblock G. Soler L\'{o}pez,
    \newblock  Transitive and minimal flows and interval exchange transformations,
    \newblock  in \emph{Advances in discrete dynamics} (eds. J. S. C\'anovas),
                Nova Science Publishers, (2013), 163--191.

\bibitem{tesinasoler}
    \newblock G. Soler L\'opez,
    \newblock  \emph{$\omega$-l\'{\i}mites de sistemas din\'{a}micos continuos},
    \newblock  Master thesis, Universidad de Murcia, 2011.

\end{thebibliography}
\end{document}